\newtheorem{theorem}{Theorem}
\theoremstyle{plain}
\newtheorem{corollary}{Corollary}
\newtheorem{definition}{Definition}
\newtheorem{example}{Example}
\newtheorem{lemma}{Lemma}
\newtheorem{proposition}{Proposition}
\newtheorem{remark}{Remark}
\numberwithin{equation}{section}
\begin{document}
\title[Local Cohomology]{Local Cohomology and non commutative Gorenstein
algebras}
\author{Roberto Mart\'{\i}nez-Villa}
\address{Centro de Ciencias Matem\'{a}ticas, UNAM Morelia}
\email{mvilla@matmor.unam.mx}
\urladdr{http://www.matmor.unam.mx}
\thanks{}
\date{October 22, 2012}
\subjclass[2000]{Primary 15A15; Secondary 05A15, 15A18}
\keywords{Gorenstein, local cohomology, non commutative}

\begin{abstract}
In this paper we continue the study of non connected graded Gorenstein
algebras initiated in [13], the main result is the proof of a version of the
Local Cohomology formula.
\end{abstract}

\maketitle

\section{\protect\bigskip Introduction}

\bigskip Non commutative versions of regular and Gorenstein algebras have
been studied by several authors, [2], [3],[6],[7],[8],[9], [19], they
usually deal with graded connected algebras. In [11],[15 ] we studied non
connected Artin Schelter regular algebras and proved that for such algebras
the Local Cohomology formula and Serre duality hold. A natural example of
such algebras is the preprojective $\Bbbk $-algebra of an Euclidean Diagram $%
Q$ [10].

In [13] we investigated non connected graded Artin Schelter Gorenstein
algebras, an easy example is the following: let $\Gamma $ be a preprojective 
$\Bbbk $-algebra of an Euclidean Diagram $Q$ with only sinks and sources and 
$\Lambda =\Bbbk Q\triangleright D(\Bbbk Q)$ the trivial extension. Then the
algebra $\Lambda \otimes _{\Bbbk }\Gamma $ is non connected and Artin
Schelter Gorenstein.

\bigskip

The aim of the paper is to continue the study of these algebras and to
provide a non connected version of the Local Cohomology formula. The article
consists of two sections; in the first one we fix the notation and recall
from [11], [12], [13] some basic results on Artin Schelter regular and
Gorenstein algebras, then we study the structure of a generalization of non
connected AS Gorenstein algebras. In the second section we give an
elementary proof of the Local Cohomology formula for such algebras.\bigskip 

{\large Thanks: I want to express my gratitude to Jun-ichi Miyachi for his
criticisms and some valuable suggestions.}

\section{Definitions and basic results}

We recall first some basic definitions and results on graded Gorenstein
algebras, then we concentrate in the study of the structure of graded Artin
Schelter Gorenstein algebras. For further properties on graded and
Gorenstein algebras we refer the reader to [1], [9],[12],[14].

\begin{definition}
Let $\Bbbk $ be a field, a locally finite positively graded $\Bbbk $-algebra
is a positively graded $\Bbbk $-algebra $\Lambda =\underset{i\geq 0}{\oplus }%
\Lambda _{i}$ such that:

i) $\Lambda _{0}=\Bbbk \times \Bbbk \times ...\Bbbk $

ii) dim$_{\Bbbk }\Lambda _{i}<\infty .$
\end{definition}

\begin{example}
Given a finite quiver $Q$ and a field $\Bbbk $, the quiver algebra is graded
by path length, given an homogeneous ideal $I$ the quotient $\Bbbk Q/I $ is
a locally finite positively graded $\Bbbk $-algebra.
\end{example}

The graded algebras we will consider here will be always locally finite
positively graded $\Bbbk $-algebras.

\begin{definition}
Given a $%
\mathbb{Z}
$-graded module $M=\{M_{i}\}_{i\in 
\mathbb{Z}
}$ over a locally finite positively graded $\Bbbk $-algebra $\Lambda $, we
say $M$ is locally finite if $\dim _{\Bbbk }M_{{}}<\infty $ for all $i$.
\end{definition}

Given a $%
\mathbb{Z}
$-graded module $M=\{M_{i}\}_{i\in 
\mathbb{Z}
}$ we denote by $M[n]$ the $n$-th shift defined by $M[n]$ $_{i}=M_{n+i}$ and
we denote the $n$-truncation of $M$ by $M_{\geq n}$, where $M_{\geq n}$ is
defined by $(M_{\geq n})_{j}=\left\{ 
\begin{array}{ccc}
\text{0} & \text{if} & \text{j}<n \\ 
\text{M}_{j} & \text{if} & \text{j}\geq \text{n}%
\end{array}%
\right. $

\begin{definition}
Given a positively graded $\Bbbk $-algebra $\Lambda =\underset{i\geq 0}{%
\oplus }\Lambda $, the graded Jacobson radical is $m=\Lambda _{\geq 1}$.
\end{definition}

We denote by $Gr_{\Lambda }$ the category of graded $\Lambda $-modules and
degree zero maps. Given graded $\Lambda $-modules $M$ and $N$, $Hom_{\Lambda
}(M,N)_{k}$ is the set of all maps $f:M\rightarrow N$ such that $%
f(M_{j})\subseteq N_{j+k}$ and we call them, maps in degree $k$. By $%
Hom_{\Lambda }(M,N)$ we mean $Hom_{\Lambda }(M,N)=\underset{k\in 
\mathbb{Z}
}{\oplus }Hom_{\Lambda }(M,N)_{k}$. The maps in all degrees $Hom_{\Lambda
}(M,N)$ is a graded $\Bbbk $-vector space. We have isomorphisms: $%
Hom_{\Lambda }$($M,N$)$_{k}\cong Hom_{\Lambda }$($M,N[k]$)$_{0}$\linebreak $%
\cong Hom_{\Lambda }(M[-k],N)_{0}$. If $M$ is finitely generated and $N$
locally finite, then \linebreak $Hom_{\Lambda }(M,N)$ is locally finite.

In a similar way the $k$ extensions in degree zero, $Ext_{\Lambda
}^{k}(-,?)_{0}$ are the derived functors of $Hom_{\Lambda }(-,?)_{0}$. We
define $Ext_{\Lambda }^{k}(M,N)_{n}=Ext_{\Lambda }^{k}(M,N[n])_{0}$ and $%
Ext_{\Lambda }^{k}(M,N)$\linebreak $=\underset{n\in 
\mathbb{Z}
}{\oplus }Ext_{\Lambda }^{k}(M,N)_{n}.$

Denote by $l.f$.$Gr_{\Lambda }$ the full subcategory of $Gr_{\Lambda }$
consisting of all locally finite modules. Then $l.f$.$Gr_{\Lambda }$ is
abelian and there is a duality $D:$ $l.f$.$Gr_{\Lambda }\rightarrow $ $l.f$.$%
Gr_{\Lambda ^{op}}$ given by $D(M)_{j}=Hom_{\Bbbk }(M_{-j},\Bbbk )$ and $%
D(M)=\{D(M)_{j}\}_{j\in 
\mathbb{Z}
}$.

A notion of regular was introduced for non commutative connected positively
graded algebras by Artin and Schelter [1] a slight generalization is the
following: (See [11],[12])

\begin{definition}
Let $\Lambda $ be a locally finite positively graded $\Bbbk $-algebra. Then $%
\Lambda $ is called Artin-Schelter regular if the following conditions are
satisfied:

i) There is an integer $n$ such that all graded simple have projective
dimension $n$.

ii) For any graded simple $S$ and an integer $0\leq i<n$, $Ext_{\Lambda
}^{i}(S$, $\Lambda )=0.$

iii) The assignment $S\rightarrow Ext_{\Lambda }^{n}(S$, $\Lambda )$ gives a
bijection between the graded simple $\Lambda $- modules and the graded $%
\Lambda ^{op}$-graded simple modules.
\end{definition}

The above definition was extended to graded categories in [15].

\begin{definition}
A ring $R$ is called Gorenstein if $R$ has finite injective dimension both
as a left and as a right $R$ -module. We denote the left (right) injective
dimension by $inj\dim _{R}R$ ( $inj\dim R_{R}).$
\end{definition}

It was proved in [18] that Gorenstein implies $inj\dim _{R}R$ =$inj\dim
R_{R} $, but it is not known whether a one side condition implies the
condition on both sides.

The notion of Artin-Schelter regular inspired for connected graded algebras
a definition of Artin Schelter Gorenstein (AS\ Gorenstein, for short) that
has been used by several authors like: [8],[9], [17], [19]. We will use here
the following variation of that definition for non connected graded algebras:

\begin{definition}
Let $\Bbbk $ be a field and $\Lambda $ a locally finite positively graded $%
\Bbbk $-algebra. Then we say that $\Lambda $ is graded Artin Schelter
Gorenstein if the following conditions are satisfied:

There exists a non negative integer $n$, called the graded injective
dimension of $\Lambda $, such that:

i) For all graded simple $S_{i}$ concentrated in degree zero and non
negative integers $j\neq n$, Ext$_{\Lambda }^{j}(S_{i},\Lambda )=0$.

ii) We have an equality Ext$_{\Lambda }^{n}(S_{i},\Lambda )=S_{i}^{\prime
}[-n_{i}]$, with $S_{i}^{\prime }$ a graded $\Lambda ^{op}$-simple.

iii) For a non negative integer $k\neq n,$ Ext$_{\Lambda
^{op}}^{k}(Ext_{\Lambda }^{n}(S_{i},\Lambda ),\Lambda )=0$ and\linebreak\ Ext%
$_{\Lambda ^{op}}^{n}(Ext_{\Lambda }^{n}(S_{i},\Lambda ),\Lambda )=S_{i}$.
\end{definition}

\bigskip\ Since it is not clear that condition i) implies $\Lambda $ of
finite injective dimension, observe that in this definition we are not
assuming the algebra is Gorenstein, however the way we defined graded AS
Gorenstein is a two sided condition. Would be interesting to know if graded
AS Gorenstein implies Gorenstein. We will some times assume that an algebra
is both Gorenstein and graded AS Gorenstein.

We next recall a result from [13] that has an interesting corollary related
to the above remark.

\bigskip

\begin{theorem}
Let $R$ be an arbitrary Gorenstein ring and $M$ a left $R$-module with a
projective resolution consisting of finitely generated projective modules.
Assume there is a non negative integer $n$ such that $Ext_{R}^{j}(M,R)=0$
for $j\neq n.$ Then $Ext_{R}^{n}(M,R)$ satisfies the following conditions:

a) $Ext_{R^{op}}^{j}(Ext_{R}^{n}(M,R),R)=0$ for $j\neq n.$

b) $Ext_{R^{op}}^{n}(Ext_{R}^{n}(M,R),R)\cong M.$

In case $R$ is graded and $M$ a graded $R$-module, the isomorphism is as
graded $R$-modules.
\end{theorem}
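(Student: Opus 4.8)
The plan is to compute everything from one fixed resolution and to exploit the reflexivity of finitely generated projectives. First I would choose a projective resolution $P_{\bullet}\to M$ with each $P_{i}$ a finitely generated projective left $R$-module, and dualize it to obtain the cochain complex $Q^{\bullet}=\operatorname{Hom}_{R}(P_{\bullet},R)$ of finitely generated projective right $R$-modules, whose cohomology is $H^{i}(Q^{\bullet})=\operatorname{Ext}_{R}^{i}(M,R)$. By hypothesis this cohomology is concentrated in degree $n$, where it equals $N:=\operatorname{Ext}_{R}^{n}(M,R)$.

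The key structural input is biduality for finitely generated projectives: the canonical map $P_{i}\to\operatorname{Hom}_{R^{op}}(\operatorname{Hom}_{R}(P_{i},R),R)$ is an isomorphism, natural in $i$. Hence the second dual complex $\operatorname{Hom}_{R^{op}}(Q^{\bullet},R)$ is isomorphic, as a complex of left modules, to $P_{\bullet}$, so its cohomology is $M$ in degree $0$ and $0$ elsewhere. I would then feed $Q^{\bullet}$ into the Cartan--Eilenberg hyper-$\operatorname{Ext}$ spectral sequence for the functor $\operatorname{Hom}_{R^{op}}(-,R)$, which takes the form
\[
E_{2}^{p,q}=\operatorname{Ext}_{R^{op}}^{p}\!\bigl(H^{q}(Q^{\bullet}),R\bigr)\Longrightarrow H^{p-q}\bigl(\operatorname{Hom}_{R^{op}}(Q^{\bullet},R)\bigr).
\]
Here the Gorenstein hypothesis enters: finite injective dimension of $R$ forces $\operatorname{Ext}_{R^{op}}^{p}(-,R)=0$ for $p\gg 0$, so the spectral sequence lies in a bounded strip and converges. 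Since $H^{q}(Q^{\bullet})$ vanishes except at $q=n$, only the single row $q=n$ survives, the sequence degenerates at $E_{2}$, and each $E_{\infty}^{p,n}=\operatorname{Ext}_{R^{op}}^{p}(N,R)$ contributes to the abutment in total degree $p-n$.

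Comparing with the abutment computed above — namely $M$ concentrated in total degree $0$ and zero otherwise — the surviving entries give $\operatorname{Ext}_{R^{op}}^{p}(N,R)=0$ whenever $p\neq n$, which is (a), and $\operatorname{Ext}_{R^{op}}^{n}(N,R)\cong M$, which is (b). I expect the main obstacle to be the bookkeeping that matches the single nonzero row against the one-dimensional abutment, together with a clean justification of convergence; the reflexivity of the $P_{i}$ and the boundedness coming from the Gorenstein condition are exactly what make both issues tractable. In the graded situation every map used — the two dualizations and the reflexivity isomorphism — is degree preserving and natural, so tracking the internal grading throughout yields the isomorphism in (b) as graded $R$-modules. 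An alternative, spectral-sequence-free route is to splice the finite resolution of $\operatorname{im}(Q^{n-1}\to Q^{n})$ with the exact right tail of $Q^{\bullet}$ into an explicit finitely generated projective resolution of $N$, dualize it, and identify the outcome with $P_{\bullet}$; this is more elementary but demands careful diagram chasing.
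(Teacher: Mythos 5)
Your argument is correct in substance, but it takes a genuinely different route from the paper's: you prove the theorem with the hyper-Ext spectral sequence of the dualized complex, whereas the paper deliberately argues by hand. In fact the paper's Remark 1 records exactly your strategy ("using spectral sequences... as suggested by Jun-Ichi Miyachi it is possible to give an alternative proof"), so you have reconstructed the road the author chose not to take. The paper instead splits into the cases $pd\,M<\infty$ and $pd\,M=\infty$; in the infinite case it sets $C=P_{n}^{\ast }/\operatorname{im}f_{n}^{\ast }$ and $X=\operatorname{im}f_{n+1}^{\ast }$, notes that $X$ is an infinite syzygy so the Gorenstein condition forces $Ext_{R^{op}}^{i}(X,R)=0$ for $i>0$, uses the exact sequence $0\rightarrow Ext_{R}^{n}(M,R)\rightarrow C\rightarrow X\rightarrow 0$ to transfer higher Ext's from $Ext_{R}^{n}(M,R)$ to $C$, computes $Ext_{R^{op}}^{\ast }(C,R)$ from the finite projective resolution $0\rightarrow P_{0}^{\ast }\rightarrow \cdots \rightarrow P_{n}^{\ast }\rightarrow C\rightarrow 0$ by biduality of finitely generated projectives, and finally kills $Ext_{R}^{n}(M,R)^{\ast }$ with a Five-lemma diagram. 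Your closing sentence about splicing the exact tail of $Q^{\bullet }$ onto a finite resolution is essentially this proof. What your route buys is brevity and transparency: conditions a) and b), including the degree-zero vanishing $N^{\ast }=0$ that costs the paper a separate diagram, all fall out of the collapse of a one-row spectral sequence, and the graded refinement is automatic from naturality of the biduality maps.

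The one place you must be more precise is the convergence point you flag, because when $pd\,M=\infty$ the complex $Q^{\bullet }$ is unbounded above, and for a half-plane double complex the fact that the $E_{2}$-page lies in a bounded strip is not, by itself, a proof of convergence, nor does it justify identifying the abutment with $H^{\ast }(Hom_{R^{op}}(Q^{\bullet },R))$. The honest statement is stronger: the Gorenstein hypothesis gives the functor $Hom_{R^{op}}(-,R)$ finite cohomological dimension, which is the standard criterion under which hyper-derived functors of unbounded complexes exist, are computed by complexes of acyclic objects (each $Q^{p}$ is projective, hence acyclic), and have convergent hypercohomology spectral sequences; this needs an explicit citation. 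Alternatively, replace $Q^{\bullet }$ by the truncation $\tau _{\leq n}Q^{\bullet }$, a bounded quasi-isomorphic complex, for which there is no convergence issue; but that replacement is legitimate only because $\ker (Q^{n}\rightarrow Q^{n+1})$ and the images $\operatorname{im}(Q^{i}\rightarrow Q^{i+1})$, $i\geq n$, are acyclic for $Hom_{R^{op}}(-,R)$ -- and proving that acyclicity is precisely the paper's infinite-syzygy argument. So the Gorenstein condition is doing more work than bounding the strip; once you say this precisely (either via the finite-cohomological-dimension theorem or via the truncation), your proof is complete.
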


\begin{proof}
Assume $M$ is of finite projective dimension $n$.

Let $n=0$. Then $M$ is projective, $M^{\ast }$ is projective and $M^{\ast
\ast }\cong M$ and $Ext_{R}^{\ell }(M,R)=0$ for $\ell \neq 0$.

Assume $pdM=n>0$ and let: $0\rightarrow P_{n}\rightarrow P_{n-1}\rightarrow
...P_{1}\rightarrow P_{0}\rightarrow M\rightarrow 0$ be a projective
resolution of minimal length with $P_{j}$ finitely generated for all $j$.

Dualizing with respect to the ring we obtain an exact sequence:

$0\rightarrow P_{0}^{\ast }\rightarrow P_{1}^{\ast }\rightarrow
...P_{n-1}^{\ast }\rightarrow P_{n}^{\ast }\rightarrow
Ext_{R}^{n}(M,R)\rightarrow 0.$

Dualizing again the complex: $0\rightarrow P_{n}^{\ast \ast }\rightarrow
P_{n-1}^{\ast \ast }\rightarrow ...P_{1}^{\ast \ast }\rightarrow P_{0}^{\ast
\ast }\rightarrow 0$ is isomorphic to the complex:

$0\rightarrow P_{n}\rightarrow P_{n-1}\rightarrow ...P_{1}\rightarrow
P_{0}\rightarrow 0.$

It follows $Ext_{R^{op}}^{i}(Ext_{R}^{n}(M,R),R)=0$ for $i\neq n$ and $%
Ext_{R^{op}}^{n}(Ext_{R}^{n}(M,R),R)\cong M$.

Assume $M$ is of infinite projective dimension. Let *) ...$\rightarrow
P_{n}\rightarrow P_{n-1}\rightarrow ...P_{1}\rightarrow P_{0}\rightarrow
M\rightarrow 0$ be a projective resolution with $P_{j}$ finitely generated
for all $j$.

Consider first the case $n=0$.

Dualizing with respect to the ring we obtain an exact sequence:

$0\rightarrow M^{\ast }\rightarrow P_{0}^{\ast }\rightarrow P_{1}^{\ast
}\rightarrow ...P_{t}^{\ast }\rightarrow P_{t+1}^{\ast }\rightarrow
Y\rightarrow 0$, where $t$ is the injective dimension of $R$ .

Then $M^{\ast }=\Omega ^{t+2}Y$ and for $i>0$, $Ext_{R}^{i}(M^{\ast
},R)=Ext_{R}^{i}(\Omega ^{t+2}Y,R)=$

$Ext_{R}^{i+t+1}(Y,R)=0$, also $Ext_{R}^{1}(\Omega
^{t}Y,R)=Ext_{R}^{t+1}(Y,R)=0$.

Hence the exact sequences: $0\rightarrow M^{\ast }\rightarrow P_{0}^{\ast
}\rightarrow \Omega ^{t+1}Y\rightarrow 0$ and $0\rightarrow \Omega
^{t+1}Y\rightarrow P_{1}^{\ast }\rightarrow \Omega ^{t}Y\rightarrow 0$
induce exact sequences:

$0\rightarrow (\Omega ^{t+1}Y)^{\ast }\rightarrow P_{0}^{\ast \ast
}\rightarrow M^{\ast \ast }\rightarrow 0$ and $0\rightarrow (\Omega
^{t}Y)^{\ast }\rightarrow P_{1}^{\ast \ast }\rightarrow (\Omega
^{t+1}Y)^{\ast }\rightarrow 0$.

We have proved the sequence: $P_{1}^{\ast \ast }\rightarrow P_{0}^{\ast \ast
}\rightarrow M^{\ast \ast }\rightarrow 0$ is exact.

It follows $M\cong M^{\ast \ast }$ and $Ext_{R^{op}}^{i}(M^{\ast },R)=0$ for 
$i\neq n$.

Assume now $n>0$. Dualizing $\ast )$ with respect to the ring we get the
complex:

$0\rightarrow M^{\ast }\rightarrow P_{0}^{\ast }\rightarrow P_{1}^{\ast
}\rightarrow ...P_{n-1}^{\ast }\overset{f_{n}^{\ast }}{\rightarrow }%
P_{n}^{\ast }\overset{f_{n\ast 1}^{\ast }}{\rightarrow }P_{n+1}^{\ast }%
\overset{f_{n+2}^{\ast }}{\rightarrow }P_{n+2}^{\ast }\rightarrow ...$

whose homology is zero except at degree $n$, where $K\func{erf}_{n+1}^{\ast
}/\func{Im}f_{n}^{\ast }=Ext_{R}^{n}(M,R)$.

Let $C=P_{n}^{\ast }/\func{Im}f_{n}^{\ast }$ and $X=\func{Im}f_{n+1}^{\ast
}=K\func{erf}_{n+2}^{\ast }=P_{n}^{\ast }/K\func{erf}_{n+1}^{\ast }$.

There is an exact sequence: *) $0\rightarrow Ext_{R}^{n}(M,R)\rightarrow
C\rightarrow X\rightarrow 0$.

Consider the exact sequence:

$0\rightarrow X\rightarrow P_{n+1}^{\ast }\rightarrow P_{n+2}^{\ast
}\rightarrow ...P_{n+t-1}^{\ast }\rightarrow P_{n+t}^{\ast }\rightarrow
Y\rightarrow 0$, with $t$ the injective dimension of $R$.

Then for $i>0$, we have isomorphisms:

$Ext_{R^{op}}^{i}(X,R)=Ext_{R^{op}}^{i}(\Omega
^{t}Y,R)=Ext_{R^{op}}^{i+t}(Y,R)=0.$

By the sequence $\ast )$ and the long homology sequence we have an exact
sequence:

$Ext_{R^{op}}^{i}(X,R)\rightarrow Ext_{R^{op}}^{i}(C,R)\rightarrow
Ext_{R^{op}}^{i}(Ext_{R}^{n}(M,R),R)\rightarrow Ext_{R^{op}}^{i+1}(X,R)$

It follows that for each $i\geq 1$ there is an isomorphism:

$Ext_{R^{op}}^{i}(C,R)\cong Ext_{R^{op}}^{i}(Ext_{R}^{n}(M,R),R)$ and that
the sequence $0\rightarrow X^{\ast }\rightarrow C^{\ast }\rightarrow
Ext_{R}^{n}(M,R)^{\ast }\rightarrow 0$ is exact.

Since $M^{\ast }=0,$the sequence: $0\rightarrow P_{0}^{\ast }\rightarrow
P_{1}^{\ast }\rightarrow ...P_{n-1}^{\ast }\overset{f_{n}^{\ast }}{%
\rightarrow }P_{n}^{\ast }\overset{f_{n\ast 1}^{\ast }}{\rightarrow }%
C\rightarrow 0$ is exact and $pdC\leq n.$

Being the complexes: $0\rightarrow P_{n}\rightarrow P_{n-1}\rightarrow
...P_{1}\rightarrow P_{0}\rightarrow 0$ and $0\rightarrow P_{n}^{\ast \ast
}\rightarrow P_{n-1}^{\ast \ast }\rightarrow ...P_{1}^{\ast \ast
}\rightarrow P_{0}^{\ast \ast }\rightarrow 0$ isomorphic, it follows $%
Ext_{R^{op}}^{i}(C,R)=0$ for $i\neq n$, $i\neq 0$ and $%
Ext_{R^{op}}^{n}(C,R)=M$.

By the above observations, $Ext_{R^{op}}^{i}(Ext_{R}^{n}(M,R),R)=0$ for $%
i\neq 0$, $i\neq n$ and $Ext_{R^{op}}^{n}(Ext_{R}^{n}(M,R),R)\cong M$.

Consider the following diagram with exact column and first row:

$%
\begin{array}{cccccc}
&  & 0 &  &  &  \\ 
&  & \downarrow &  &  &  \\ 
P_{n-1}^{\ast } & \rightarrow & Kerf_{n\ast 1}^{\ast } & \rightarrow & 
Ext_{R}^{n}(M,R) & \rightarrow 0 \\ 
\downarrow 1 &  & \downarrow &  &  &  \\ 
P_{n-1}^{\ast } & \overset{f_{n}^{\ast }}{\rightarrow } & P_{n}^{\ast } & 
\overset{f_{n\ast 1}^{\ast }}{\rightarrow } & P_{n+1}^{\ast } &  \\ 
&  & \downarrow & \nearrow &  &  \\ 
&  & \func{Im}f_{n\ast 1}^{\ast } &  &  &  \\ 
&  & \downarrow &  &  &  \\ 
&  & 0 &  &  & 
\end{array}%
$

Using the fact $Ext_{R}^{1}(X,R)=Ext_{R}^{1}(\func{Im}f_{n\ast 1}^{\ast
},R)=0$ we obtain by dualizing an exact commutative diagram:

$%
\begin{array}{ccccccc}
&  &  & 0 &  &  &  \\ 
&  &  & \downarrow &  &  &  \\ 
&  &  & (\func{Im}f_{n\ast 1}^{\ast })^{\ast } &  &  &  \\ 
&  & \nearrow & \downarrow &  &  &  \\ 
& P_{n+1}^{\ast \ast } & \overset{f_{n+1}^{\ast \ast }}{\rightarrow } & 
P_{n}^{\ast \ast } & \overset{f_{n}^{\ast \ast }}{\rightarrow } & 
P_{n+1}^{\ast \ast } &  \\ 
& \downarrow t &  & \downarrow p &  & \downarrow 1 &  \\ 
0\rightarrow & Ext_{R}^{n}(M,R)^{\ast } & \overset{s}{\rightarrow } & 
(Kerf_{n\ast 1}^{\ast })^{\ast } & \longrightarrow & P_{n+1}^{\ast \ast } & 
\\ 
&  &  & \downarrow &  &  &  \\ 
&  &  & 0 &  &  & 
\end{array}%
$

By Five's lemma, $t$ is an epimorphism and $st=pf_{n+1}^{\ast \ast }=0$
implies $Ext_{R}^{n}(M,R)^{\ast }=0$, as claimed.
\end{proof}

\begin{remark}
Using spectral sequences and results from [], as suggested by Jun-Ichi
Miyachi it is possible to give an alternative proof of the above theorem.
\end{remark}

\begin{corollary}
Let $\Lambda $ be a locally finite positively graded $\Bbbk $-algebra that
is Gorenstein of injective dimension $n$, such that all graded left and all
graded right simple have projective resolutions consisting of finitely
generated projective modules.

Assume the following conditions hold:

a) For all graded simple $S$ and non negative integers $i\neq n$, $%
Ext_{\Lambda }^{i}(S,\Lambda )=0$.

b) Each right module $Ext_{\Lambda }^{n}(S,\Lambda )$ is graded simple.

Then $\Lambda $ is graded AS Gorenstein.
\end{corollary}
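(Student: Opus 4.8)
The plan is to verify the three conditions in the definition of graded AS Gorenstein one at a time, treating the preceding theorem as the engine that delivers the hardest condition.

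First, condition (i) is immediate. Hypothesis (a) asserts that $Ext_{\Lambda }^{i}(S,\Lambda )=0$ for \emph{every} graded simple $S$ and every non negative $i\neq n$; specializing to the simples $S_{i}$ concentrated in degree zero is just a restriction of this statement, so condition (i) holds verbatim. Here one should also note that the integer $n$ furnished by the Gorenstein hypothesis (the injective dimension) is the same $n$ appearing in (a) and (b), so that the single index $n$ is used consistently as the ``graded injective dimension'' throughout.

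Next, for condition (ii) I would invoke the structure of graded simples over a locally finite positively graded $\Bbbk $-algebra: any such graded simple is, up to a degree shift, concentrated in a single degree, i.e.\ isomorphic to $S'[-m]$ for a graded simple $S'$ sitting in degree zero and some integer $m$. By hypothesis (b), the right module $Ext_{\Lambda }^{n}(S_{i},\Lambda )$ is graded simple, so writing it in this normal form as $S_{i}^{\prime }[-n_{i}]$, with $S_{i}^{\prime }$ a graded $\Lambda ^{op}$-simple concentrated in degree zero, gives exactly the shape required in (ii).

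Finally, condition (iii) follows by applying the preceding theorem with $R=\Lambda $ and $M=S_{i}$. The hypotheses are met: $\Lambda $ is Gorenstein by assumption, each graded simple $S_{i}$ has a projective resolution by finitely generated projectives by assumption, and (a) supplies the vanishing $Ext_{\Lambda }^{j}(S_{i},\Lambda )=0$ for $j\neq n$. The theorem then yields $Ext_{\Lambda ^{op}}^{k}(Ext_{\Lambda }^{n}(S_{i},\Lambda ),\Lambda )=0$ for $k\neq n$ together with $Ext_{\Lambda ^{op}}^{n}(Ext_{\Lambda }^{n}(S_{i},\Lambda ),\Lambda )\cong S_{i}$, and since $\Lambda $ and $S_{i}$ are graded the isomorphism is one of graded modules, precisely condition (iii). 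Since the whole argument is an unwinding of the definition plus one invocation of the theorem, there is no genuine obstacle; the only points demanding a moment's care are the structural remark used for (ii) and the bookkeeping that keeps the integer $n$ aligned across all three conditions.
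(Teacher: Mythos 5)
Your proposal is correct and follows exactly the route the paper intends: the paper states this corollary without proof as an immediate consequence of the preceding theorem, and your argument---hypothesis (a) giving condition (i), hypothesis (b) plus the fact that graded simples over a locally finite positively graded algebra are concentrated in a single degree giving condition (ii), and the theorem applied to $M=S_{i}$ giving condition (iii)---is precisely that intended unwinding. Your explicit attention to the one-degree concentration of graded simples and to the consistent use of the integer $n$ supplies details the paper leaves tacit, but introduces no new method.
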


We comeback now to the more general notion of AS Gorenstein graded algebra
and prove that, like in the Artin-Schelter regular case (See [15]), the
functor $Ext_{\Lambda }^{n}(-,\Lambda )$ induces a duality between the
categories of graded $\Lambda $-modules of finite length and the
corresponding category of $\Lambda ^{op}$-modules.

\begin{proposition}
Let $\Lambda $ be a graded AS Gorenstein algebra of graded injective
dimension $n$, and assume all graded simple left modules have projective
resolutions consisting of finitely generated projective modules. Then for
any graded left $\Lambda $-module $M$ of finite length, the following is
true:

i) For any non negative integers $i\neq n$, $Ext_{\Lambda }^{i}(M,\Lambda )$=%
$0$.

ii) The right $\Lambda $-module $Ext_{\Lambda }^{n}(M,\Lambda )$ has finite
length and $\ell (Ext_{\Lambda }^{n}(M,\Lambda ))$=$\ell (M)$.

iii) $Ext_{\Lambda ^{op}}^{n}(Ext_{\Lambda }^{n}(M,\Lambda ),\Lambda )\cong
M $.
\end{proposition}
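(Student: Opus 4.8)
The plan is to induct on the graded length $\ell (M)$, proving i), ii), iii) simultaneously together with the auxiliary vanishing statement $Ext_{\Lambda ^{op}}^{k}(Ext_{\Lambda }^{n}(M,\Lambda ),\Lambda )=0$ for all $k\neq n$. This extra clause is what makes the inductive step for iii) close, and it costs nothing in the base case because it is literally part of the definition of graded AS Gorenstein. I stress that I would \emph{not} invoke the earlier Theorem, since that result assumes $R$ Gorenstein, a hypothesis the proposition deliberately avoids; instead everything is bootstrapped from the definition on simples. For $\ell (M)=1$ the module $M$ is a shift $S[m]$ of a graded simple, and since $Ext_{\Lambda }^{i}(S[m],\Lambda )$ is merely a degree shift of $Ext_{\Lambda }^{i}(S,\Lambda )$, all four statements reduce to the defining conditions i), ii), iii) of a graded AS Gorenstein algebra; the shift affects neither vanishing nor length.

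For the inductive step I would pick a graded simple submodule $M^{\prime }\subseteq M$ and put $M^{\prime \prime }=M/M^{\prime }$, giving a short exact sequence $0\rightarrow M^{\prime }\rightarrow M\rightarrow M^{\prime \prime }\rightarrow 0$ with $\ell (M^{\prime }),\ell (M^{\prime \prime })<\ell (M)$. Applying $Hom_{\Lambda }(-,\Lambda )$ and invoking the inductive hypothesis, the outer terms $Ext_{\Lambda }^{i}(M^{\prime },\Lambda )$ and $Ext_{\Lambda }^{i}(M^{\prime \prime },\Lambda )$ vanish for $i\neq n$, so $Ext_{\Lambda }^{i}(M,\Lambda )=0$ for $i\neq n$, which is i). At $i=n$ the long exact sequence collapses to a short exact sequence $0\rightarrow Ext_{\Lambda }^{n}(M^{\prime \prime },\Lambda )\rightarrow Ext_{\Lambda }^{n}(M,\Lambda )\rightarrow Ext_{\Lambda }^{n}(M^{\prime },\Lambda )\rightarrow 0$ of right modules, and additivity of length with the inductive hypothesis gives $\ell (Ext_{\Lambda }^{n}(M,\Lambda ))=\ell (M^{\prime })+\ell (M^{\prime \prime })=\ell (M)$, which is ii). Dualizing this degree-$n$ sequence by $Hom_{\Lambda ^{op}}(-,\Lambda )$ and feeding in the auxiliary vanishing for $M^{\prime }$ and $M^{\prime \prime }$ simultaneously yields the auxiliary vanishing for $M$ and a short exact sequence $0\rightarrow M^{\prime }\rightarrow Ext_{\Lambda ^{op}}^{n}(Ext_{\Lambda }^{n}(M,\Lambda ),\Lambda )\rightarrow M^{\prime \prime }\rightarrow 0$, after identifying the two outer double-dual terms with $M^{\prime }$ and $M^{\prime \prime }$ by induction (note the contravariant dualization reverses the sequence, so the submodule is $M^{\prime }$ and the quotient $M^{\prime \prime }$, matching the original orientation).

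The remaining point, which I expect to be the genuine obstacle, is to upgrade this last sequence from ``an extension of $M^{\prime \prime }$ by $M^{\prime }$ of length $\ell (M)$'' to an actual isomorphism with $M$, since an abstract extension with the correct composition factors need not equal $M$. To settle this I would use the canonical biduality morphism $\eta _{M}:M\rightarrow Ext_{\Lambda ^{op}}^{n}(Ext_{\Lambda }^{n}(M,\Lambda ),\Lambda )$, obtained by dualizing a finitely generated projective resolution $P_{\bullet }$ twice and using the natural isomorphism $P_{i}\cong P_{i}^{\ast \ast }$, equivalently as the degree-zero part of the derived double-dual map $M\rightarrow RHom_{\Lambda ^{op}}(RHom_{\Lambda }(M,\Lambda ),\Lambda )$. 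Here I first record that $M$ does admit such a resolution: finite length modules are assembled from graded simples by extensions, each graded simple has a finitely generated projective resolution by hypothesis, and the horseshoe lemma propagates this along a composition series.

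Because $\eta $ is natural, the short exact sequence just produced and the original one fit into a commutative ladder with exact rows whose outer vertical arrows are $\eta _{M^{\prime }}$ and $\eta _{M^{\prime \prime }}$, isomorphisms by induction; the five lemma then forces $\eta _{M}$ to be an isomorphism, establishing iii). The two things requiring care are the naturality of $\eta $ with respect to the connecting maps of the two long exact sequences, which I would derive from the functoriality of $RHom_{\Lambda }(-,\Lambda )$ applied to the triangle attached to $0\rightarrow M^{\prime }\rightarrow M\rightarrow M^{\prime \prime }\rightarrow 0$, and the verification that $\eta _{S}$ coincides with the isomorphism furnished by condition iii) of the definition in the base case. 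Everything else is a bookkeeping of shifts and an appeal to additivity of length, so the crux is precisely the construction and naturality of the biduality map $\eta $.
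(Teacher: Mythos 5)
Your proposal is correct, and strategically it is the same proof the paper gives: an induction on $\ell(M)$ through a short exact sequence with a simple factor (you split off a simple submodule, the paper a maximal submodule with simple quotient), with i) and ii) coming from the long exact sequence plus additivity of length, and iii) from the short five lemma applied to a ladder of canonical biduality maps whose base case is the AS Gorenstein definition. The genuine difference is how the comparison map is built. You outsource it to derived biduality $M\rightarrow RHom_{\Lambda ^{op}}(RHom_{\Lambda }(M,\Lambda ),\Lambda )$ and take $H^{0}$ using part i); this works, but note that since $M$ may have infinite projective dimension, $RHom_{\Lambda }(M,\Lambda )$ is an unbounded complex of projectives that is not K-projective, so the inner dual must itself be resolved before dualizing again --- the naive ``dualize the resolution twice'' shortcut does not compute the derived double dual. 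The paper performs exactly this step by hand and stays elementary: it glues resolutions of the outer terms via the Horseshoe lemma, dualizes the resulting exact sequence of resolutions, and works with the cokernels $C_{S},C_{M},C_{M^{\prime }}$ of the truncated dual complexes; its maps $\psi $, induced by the inclusions $Ext_{\Lambda }^{n}(-,\Lambda )\subseteq C_{-}$ together with the identifications $Ext_{\Lambda ^{op}}^{n}(C_{M},\Lambda )\cong M$ coming from $P_{\bullet }^{\ast \ast }\cong P_{\bullet }$, are precisely your $\eta $'s, with naturality visible because the three resolutions are compatible from the start. Your approach buys brevity and a clean isolation of the crux; the paper's buys independence from derived-category machinery.

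Two further points. First, your auxiliary clause $Ext_{\Lambda ^{op}}^{k}(Ext_{\Lambda }^{n}(M,\Lambda ),\Lambda )=0$ for $k\neq n$ is not optional bookkeeping: the paper needs it too (exactness of its final bottom row uses the vanishing of $Ext_{\Lambda ^{op}}^{n+1}(Ext_{\Lambda }^{n}(M^{\prime },\Lambda ),\Lambda )$), and carrying it explicitly through the induction is an improvement on the printed argument. Second, the base-case subtlety you flag but do not resolve --- that the \emph{canonical} map $\eta _{S}$, not merely some abstract isomorphism, realizes condition iii) of the definition --- is treated no more carefully in the paper, which simply asserts that $\psi _{S}$ and $\psi _{M^{\prime }}$ are isomorphisms. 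For $n=0$ this is automatic (the evaluation map on a simple is nonzero because $S^{\ast }\neq 0$, and a nonzero map between simples is an isomorphism), but for $n>0$ both your proof and the paper's tacitly read condition iii) as a statement about the canonical map. So your proposal is complete to the same standard as the paper's own proof.
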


\begin{proof}
We consider first the case $n=0$.

This means that for any graded simple $S$ the dual with respect to the ring $%
S^{\ast }$ is simple, $S\cong S^{\ast \ast }$ and $Ext_{\Lambda
}^{i}(S,\Lambda )=0$ for $i\neq 0$.

We prove the claim by induction on $\ell (M).$

Let $M^{\prime }$ be a maximal graded submodule. Then $M/M^{\prime }=S$ is
simple and the exact sequence: $0\rightarrow M^{\prime }\rightarrow
M\rightarrow S\rightarrow 0$ induces, by hypothesis, the exact sequence: *) $%
0\rightarrow S^{\ast }\rightarrow M^{\ast }\rightarrow (M^{\prime })^{\ast
}\rightarrow 0$. Dualizing again, we have a commutative exact diagram:

$%
\begin{array}{ccccccc}
0\rightarrow & M^{\prime } & \rightarrow & M & \rightarrow & S & \rightarrow
0 \\ 
& \downarrow \theta _{M^{\prime }} &  & \downarrow \theta _{M} &  & 
\downarrow \theta _{S} &  \\ 
0\rightarrow & (M^{\prime })^{\ast \ast } & \rightarrow & M^{\ast \ast } & 
\rightarrow & S^{\ast \ast } & 
\end{array}%
$

By hypothesis $\theta _{M^{\prime }}$ and $\theta _{S}$ are isomorphisms, by
the short Five's lemma $\theta _{M}$ is an isomorphism. By induction
hypothesis $\ell (M^{\prime })=\ell ((M^{\prime })^{\ast })$ and the
exactness of the sequence *) implies $\ell (M)=\ell (M^{\ast })$.

Assume $n>0$ and apply again induction on $\ell (M)$. As before, $M^{\prime
} $ is a maximal graded submodule of $M$ and $M/M^{\prime }=S.$

From the long homology sequence we have an exact sequence:

$Ext_{\Lambda }^{i}(S,\Lambda )\rightarrow Ext_{\Lambda }^{i}(M,\Lambda
)\rightarrow Ext_{\Lambda }^{i}(M^{\prime },\Lambda )$

$Ext_{\Lambda }^{i}(S,\Lambda )=Ext_{\Lambda }^{i}(M^{\prime },\Lambda )=0$
for $i\neq n$ implies $Ext_{\Lambda }^{i}(M,\Lambda )=0$ for $i\neq n$ and $%
\ell (Ext_{\Lambda }^{n}(M^{\prime },\Lambda )=\ell (M^{\prime })$ implies $%
\ell (Ext_{\Lambda }^{n}(M,\Lambda ))=\ell (M).$

We only need to prove $Ext_{\Lambda ^{op}}^{n}(Ext_{\Lambda }^{n}(M,\Lambda
),\Lambda )\cong M$.

Let ...$\rightarrow Q_{i}\rightarrow Q_{i-1}\rightarrow ...Q_{1}\rightarrow
Q_{0}\rightarrow M^{\prime }\rightarrow 0$ and ...$\rightarrow
P_{i}\rightarrow P_{i-1}\rightarrow ...P_{1}\rightarrow P_{0}\rightarrow
S\rightarrow 0$ be graded projective resolutions. By Horseshoe's lemma,
there is an exact commutative diagram:

**) $%
\begin{array}{ccccccc}
& \underset{.}{\overset{.}{.}} &  & \underset{.}{\overset{.}{.}} &  & 
\underset{.}{\overset{.}{.}} &  \\ 
& \downarrow &  & \downarrow &  & \downarrow &  \\ 
0\rightarrow & Q_{i+1} & \rightarrow & Q_{i+1}\oplus P_{i+1} & \rightarrow & 
P_{i+1} & \rightarrow 0 \\ 
& f_{i+1}\downarrow &  & h_{i+1}\downarrow &  & g_{i+1}\downarrow &  \\ 
0\rightarrow & Q_{i} & \rightarrow & Q_{i}\oplus P_{i} & \rightarrow & P_{i}
& \rightarrow 0 \\ 
& \underset{.}{\overset{.}{.}} &  & \underset{.}{\overset{.}{.}} &  & 
\underset{.}{\overset{.}{.}} &  \\ 
& \downarrow &  & \downarrow &  & \downarrow &  \\ 
0\rightarrow & Q_{1} & \rightarrow & Q_{1}\oplus P_{1} & \rightarrow & P_{1}
& \rightarrow 0 \\ 
& f_{1}\downarrow &  & h_{1}\downarrow &  & g_{1}\downarrow &  \\ 
0\rightarrow & Q_{0} & \rightarrow & Q_{0}\oplus P_{0} & \rightarrow & P_{0}
& \rightarrow 0 \\ 
& f_{0}\downarrow &  & h_{0}\downarrow &  & g_{0}\downarrow &  \\ 
0\rightarrow & M^{\prime } & \rightarrow & M & \rightarrow & S & \rightarrow
0 \\ 
& \downarrow &  & \downarrow &  & \downarrow &  \\ 
& 0 &  & 0 &  & 0 & 
\end{array}%
$

Dualizing the diagram **) with respect to the ring we obtain an exact
sequence of complexes:

$%
\begin{array}{ccccccc}
& 0 &  & 0 &  & 0 &  \\ 
& \downarrow &  & \downarrow &  & \downarrow &  \\ 
0\rightarrow & P_{0}^{\ast } & \rightarrow & P_{0}^{\ast }\oplus Q_{0}^{\ast
} & \rightarrow & Q_{0}^{\ast } & \rightarrow 0 \\ 
& g_{1}^{\ast }\downarrow &  & h_{1}^{\ast }\downarrow &  & f_{1}^{\ast
}\downarrow &  \\ 
0\rightarrow & P_{1}^{\ast } & \rightarrow & P_{1}^{\ast }\oplus Q_{1}^{\ast
} & \rightarrow & Q_{1}^{\ast } & \rightarrow 0 \\ 
& \underset{.}{\overset{.}{.}} &  & \underset{.}{\overset{.}{.}} &  & 
\underset{.}{\overset{.}{.}} &  \\ 
& \downarrow &  & \downarrow &  & \downarrow &  \\ 
0\rightarrow & P_{n}^{\ast } & \rightarrow & P_{n}^{\ast }\oplus Q_{n}^{\ast
} & \rightarrow & Q_{n}^{\ast } & \rightarrow 0 \\ 
& g_{n+1}^{\ast }\downarrow &  & h_{n+1}^{\ast }\downarrow &  & 
f_{n+1}^{\ast }\downarrow &  \\ 
0\rightarrow & P_{n+1}^{\ast } & \rightarrow & P_{n+1}^{\ast }\oplus
Q_{n+1}^{\ast } & \rightarrow & Q_{n+1}^{\ast } & \rightarrow 0 \\ 
& \downarrow &  & \downarrow &  & \downarrow &  \\ 
& \underset{.}{\overset{.}{.}} &  & \underset{.}{\overset{.}{.}} &  & 
\underset{.}{\overset{.}{.}} & 
\end{array}%
$

whose homology is zero except at degree $n$. Then we have an exact sequence
of projective resolutions:

$%
\begin{array}{ccccccc}
& 0 &  & 0 &  & 0 &  \\ 
& \downarrow &  & \downarrow &  & \downarrow &  \\ 
0\rightarrow & P_{0}^{\ast } & \rightarrow & P_{0}^{\ast }\oplus Q_{0}^{\ast
} & \rightarrow & Q_{0}^{\ast } & \rightarrow 0 \\ 
& g_{1}^{\ast }\downarrow &  & h_{1}^{\ast }\downarrow &  & f_{1}^{\ast
}\downarrow &  \\ 
0\rightarrow & P_{1}^{\ast } & \rightarrow & P_{1}^{\ast }\oplus Q_{1}^{\ast
} & \rightarrow & Q_{1}^{\ast } & \rightarrow 0 \\ 
& \underset{.}{\overset{.}{.}} &  & \underset{.}{\overset{.}{.}} &  & 
\underset{.}{\overset{.}{.}} &  \\ 
& \downarrow &  & \downarrow &  & \downarrow &  \\ 
0\rightarrow & P_{n}^{\ast } & \rightarrow & P_{n}^{\ast }\oplus Q_{n}^{\ast
} & \rightarrow & Q_{n}^{\ast } & \rightarrow 0 \\ 
& \downarrow &  & \downarrow &  & \downarrow &  \\ 
0\rightarrow & C_{S} & \rightarrow & C_{M} & \rightarrow & C_{M^{\prime }} & 
\rightarrow 0 \\ 
& \downarrow &  & \downarrow &  & \downarrow &  \\ 
& 0 &  & 0 &  & 0 & 
\end{array}%
$

Setting as above $C_{S}=P_{n}^{\ast }/\func{Im}g_{n}^{\ast }$, $%
C_{M}=P_{n}^{\ast }\oplus Q_{n}^{\ast }/\func{Im}h_{n}^{\ast }$ and $%
C_{M^{\prime }}=Q_{n}^{\ast }/\func{Im}f_{n}^{\ast }$ and $X_{S}=P_{n}^{\ast
}/Kerg_{n+1}^{\ast }$, $X_{M}=P_{n}^{\ast }\oplus Q_{n}^{\ast
}/Kerh_{n+1}^{\ast }$ and $X_{M^{\prime }}=Q_{n}^{\ast }/\func{Im}%
f_{n+1}^{\ast }$.

Then there is an exact commutative diagram:

***) $%
\begin{array}{ccccccc}
& 0 &  & 0 &  & 0 &  \\ 
& \downarrow &  & \downarrow &  & \downarrow &  \\ 
0\rightarrow & Ext_{\Lambda }^{n}(S,\Lambda ) & \rightarrow & Ext_{\Lambda
}^{n}(M,\Lambda ) & \rightarrow & Ext_{\Lambda }^{n}(M^{\prime },\Lambda ) & 
\rightarrow 0 \\ 
& \downarrow &  & \downarrow &  & \downarrow &  \\ 
0\rightarrow & C_{S} & \rightarrow & C_{M} & \rightarrow & C_{M^{\prime }} & 
\rightarrow 0 \\ 
& \downarrow &  & \downarrow &  & \downarrow &  \\ 
0\rightarrow & X_{S} & \rightarrow & X_{M} & \rightarrow & X_{M^{\prime }} & 
\rightarrow 0 \\ 
& \downarrow &  & \downarrow &  & \downarrow &  \\ 
& 0 &  & 0 &  & 0 & 
\end{array}%
$

Applying the functor $Ext_{\Lambda ^{op}}^{n}(-,\Lambda )$ to the diagram
***) we have a commutative exact diagram: \newline
$%
\begin{array}{ccc}
\text{M}^{\prime } & \text{M} & \text{S} \\ 
\downarrow \cong & \downarrow \cong & \downarrow \cong \\ 
\text{0}\rightarrow \text{Ext}_{\Lambda }^{n}\text{(C}_{M^{\prime }}\text{,}%
\Lambda \text{)} & \rightarrow \text{Ext}_{\Lambda }^{n}\text{(C}_{M}\text{,}%
\Lambda \text{)} & \rightarrow \text{Ext}_{\Lambda }^{n}\text{(C}_{S}\text{,}%
\Lambda \text{)}\rightarrow \text{0} \\ 
\downarrow \psi _{M^{\prime }} & \downarrow \psi _{M} & \downarrow \psi _{S}
\\ 
\text{0}\rightarrow \text{Ext}_{\Lambda }^{n}\text{(Ext}_{\Lambda }^{n}\text{%
(M}^{\prime }\text{,}\Lambda \text{),}\Lambda \text{)} & \rightarrow \text{%
Ext}_{\Lambda }^{n}\text{(Ext}_{\Lambda }^{n}\text{(M,}\Lambda \text{),}%
\Lambda \text{)} & \rightarrow \text{Ext}_{\Lambda }^{n}\text{(Ext}_{\Lambda
}^{n}\text{(S,}\Lambda \text{),}\Lambda \text{)}\rightarrow \text{0} \\ 
\downarrow & \downarrow & \downarrow \\ 
0 & 0 & 0%
\end{array}%
$

where $\psi _{M^{\prime }}$ and $\psi _{S}$ are isomorphisms. Therefore $%
\psi _{M}$ is an isomorphism.
\end{proof}

\begin{lemma}
Let $\Lambda $ be an algebra over a field $\Bbbk $ and $M$ a finitely
presented left $\Lambda $-module. Then for any left $\Lambda $-module $X$
there is a natural isomorphism of $\Bbbk $-vector spaces: $Hom_{\Bbbk
}(Hom_{\Lambda }(M,X),\Bbbk ))\cong Hom_{\Bbbk }(X,\Bbbk )\otimes _{\Lambda
}M.$
\end{lemma}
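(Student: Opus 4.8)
The plan is to exhibit a single natural transformation and then reduce to the free case by a standard dévissage. For a left $\Lambda $-module $X$, write $X^{\vee }=Hom_{\Bbbk }(X,\Bbbk )$, which is a right $\Lambda $-module via $(\varphi \lambda )(x)=\varphi (\lambda x)$, so that $X^{\vee }\otimes _{\Lambda }M$ is defined. Define $\Phi _{M}:X^{\vee }\otimes _{\Lambda }M\rightarrow Hom_{\Bbbk }(Hom_{\Lambda }(M,X),\Bbbk )$ on generators by letting $\Phi _{M}(\varphi \otimes m)$ be the $\Bbbk $-functional $g\mapsto \varphi (g(m))$ on $Hom_{\Lambda }(M,X)$. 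The first routine check is that this is $\Lambda $-balanced: since $g$ is a $\Lambda $-map, $\Phi _{M}(\varphi \lambda \otimes m)(g)=\varphi (\lambda g(m))=\varphi (g(\lambda m))=\Phi _{M}(\varphi \otimes \lambda m)(g)$, so $\Phi _{M}$ is well defined. A second short computation, unwinding that $Hom_{\Lambda }(-,X)$ sends $u\colon M\rightarrow N$ to precomposition $g\mapsto g\circ u$, shows $\Phi _{M}$ is natural in $M$: both routes around the naturality square send $\varphi \otimes m$ to the functional $g\mapsto \varphi (g(u(m)))$. Note that $M$ is only finitely presented, not finite, so I do not expect $\Phi _{M}$ to be an isomorphism for arbitrary $M$; finite presentation is exactly what will make the reduction work.

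Next I would verify $\Phi _{M}$ is an isomorphism on free modules. For $M=\Lambda $ the evaluation $Hom_{\Lambda }(\Lambda ,X)\cong X$ at $1$ and the canonical $X^{\vee }\otimes _{\Lambda }\Lambda \cong X^{\vee }$ identify both sides with $X^{\vee }$, and under these identifications $\Phi _{\Lambda }$ is the identity, hence an isomorphism. Both functors $M\mapsto X^{\vee }\otimes _{\Lambda }M$ and $M\mapsto Hom_{\Bbbk }(Hom_{\Lambda }(M,X),\Bbbk )$ are additive (they carry finite direct sums to finite direct sums, the tensor because $\otimes $ is additive, the other because $Hom_{\Lambda }(-,X)$ and $Hom_{\Bbbk }(-,\Bbbk )$ are), and $\Phi $ respects these decompositions by naturality with respect to the canonical inclusions and projections. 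Hence $\Phi _{\Lambda ^{n}}$ is an isomorphism for every finite free module $\Lambda ^{n}$.

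Finally I would run the five-lemma argument against a finite presentation $\Lambda ^{m}\rightarrow \Lambda ^{n}\rightarrow M\rightarrow 0$. The right-hand functor $X^{\vee }\otimes _{\Lambda }-$ is right exact because $\otimes $ is. For the left-hand functor, apply $Hom_{\Lambda }(-,X)$ to the presentation to get the left-exact sequence $0\rightarrow Hom_{\Lambda }(M,X)\rightarrow Hom_{\Lambda }(\Lambda ^{n},X)\rightarrow Hom_{\Lambda }(\Lambda ^{m},X)$, and then apply $Hom_{\Bbbk }(-,\Bbbk )$, which is exact since $\Bbbk $ is a field; this turns the displayed left-exact sequence into a right-exact one, so $M\mapsto Hom_{\Bbbk }(Hom_{\Lambda }(M,X),\Bbbk )$ is right exact as well. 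By naturality of $\Phi $ we obtain a commutative diagram with exact rows
$$
\begin{array}{ccccccc}
X^{\vee }\otimes _{\Lambda }\Lambda ^{m} & \rightarrow & X^{\vee }\otimes _{\Lambda }\Lambda ^{n} & \rightarrow & X^{\vee }\otimes _{\Lambda }M & \rightarrow & 0 \\
\downarrow \Phi _{\Lambda ^{m}} &  & \downarrow \Phi _{\Lambda ^{n}} &  & \downarrow \Phi _{M} &  &  \\
Hom_{\Bbbk }(Hom_{\Lambda }(\Lambda ^{m},X),\Bbbk ) & \rightarrow & Hom_{\Bbbk }(Hom_{\Lambda }(\Lambda ^{n},X),\Bbbk ) & \rightarrow & Hom_{\Bbbk }(Hom_{\Lambda }(M,X),\Bbbk ) & \rightarrow & 0
\end{array}
$$
in which $\Phi _{\Lambda ^{m}}$ and $\Phi _{\Lambda ^{n}}$ are isomorphisms. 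The four-lemma (the five lemma padded with zeros on the right) then forces $\Phi _{M}$ to be an isomorphism, which is the claim. I expect the only genuinely delicate point to be the bookkeeping of the left/right module structures together with the verification that the composite contravariant functor $Hom_{\Bbbk }(Hom_{\Lambda }(-,X),\Bbbk )$ is right exact on finitely presented modules; everything after that is formal.
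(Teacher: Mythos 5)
Your proof is correct, and it shares the paper's overall skeleton (a d\'{e}vissage over a finite presentation, using right exactness of both sides and agreement on projectives), but the mechanics are genuinely different. The paper never writes down a comparison map: it tensors the presentation $P_{1}\rightarrow P_{0}\rightarrow M\rightarrow 0$ with $D(X)$, rewrites each term by the chain $D(X)\otimes _{\Lambda }P_{i}\cong Hom_{\Lambda ^{op}}(P_{i}^{\ast },D(X))\cong Hom_{\Bbbk }(P_{i}^{\ast }\otimes _{\Lambda }X,\Bbbk )\cong D(Hom_{\Lambda }(P_{i},X))$ (duality with respect to the ring plus tensor-hom adjunction, valid because each $P_{i}$ is finitely generated projective), and then identifies the cokernel $D(X)\otimes _{\Lambda }M$ of that sequence with the cokernel $D(Hom_{\Lambda }(M,X))$ of the dualized left-exact sequence coming from $Hom_{\Lambda }(-,X)$. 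You replace this chain of identifications by the single explicit evaluation map $\Phi _{M}(\varphi \otimes m)(g)=\varphi (g(m))$, checked to be an isomorphism on free modules, followed by the four/five lemma. What your route buys is precisely the point the paper leaves implicit: identifying two right-exact sequences termwise only identifies cokernels if the connecting squares commute, and your $\Phi $ makes that commutativity hold by construction; it also yields the naturality (in $M$, and visibly in $X$) that the statement asserts but the paper's proof never addresses. What the paper's route buys is the intermediate isomorphism $P^{\ast }\otimes _{\Lambda }X\cong Hom_{\Lambda }(P,X)$ for finitely generated projectives, which is not a throwaway step there: the same identification is reused later, e.g. $\mathfrak{P}^{\ast (k)}\otimes M\cong Hom_{\Lambda }(\mathfrak{P}^{(k)},M)$ in the local cohomology section, so the paper's proof doubles as a setup for those computations, while yours is the cleaner self-contained argument for the lemma itself.
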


\begin{proof}
Let *) $P_{1}\rightarrow P_{0}\rightarrow M\rightarrow 0$ be a finite
projective presentation of $M$. Tensoring with $D(X)=Hom_{\Bbbk }(X,\Bbbk )$%
, we obtain an exact sequence: $D(X)\otimes P_{1}\rightarrow D(X)\otimes
P_{0}\rightarrow D(X)\otimes M\rightarrow 0$ of $\Bbbk $-vector spaces,
which isomorphic to the exact sequence:

**) $Hom_{\Lambda ^{op}}(P_{1}^{\ast }$, $D(X))\rightarrow Hom_{\Lambda
^{op}}(P_{0}^{\ast }$, $D(X))\rightarrow D(X)\otimes M\rightarrow 0$ , $%
P_{i}^{\ast }$ is the dual with respect to the ring.

By adjunction, the sequence **) is isomorphic to the exact sequence:

$Hom_{\Bbbk }(P_{1}^{\ast }\otimes X,\Bbbk )\rightarrow Hom_{\Bbbk
}(P_{0}^{\ast }\otimes X,\Bbbk )\rightarrow D(X)\otimes M\rightarrow 0.$

In addition, $P_{i}^{\ast }\otimes X\cong Hom_{\Lambda }(P_{i},X)$ and **)
is isomorphic to:

$D(Hom_{\Lambda }(P_{1},X))\rightarrow D(Hom_{\Lambda }(P_{0},X))\rightarrow
D(X)\otimes M\rightarrow 0.$

In the other hand, the presentation *) induces an exact sequence:

$0\rightarrow $ $Hom_{\Lambda }(M,X)\rightarrow Hom_{\Lambda
}(P_{0},X)\rightarrow Hom_{\Lambda }(P_{1},X).$

Dualizing it we obtain the exact sequence:

$D(Hom_{\Lambda }(P_{1},X))\rightarrow D(Hom_{\Lambda }(P_{0},X))\rightarrow
D(Hom_{\Lambda }(M,X))\rightarrow 0.$

It follows: $D(X)\otimes M\cong D(Hom_{\Lambda }(M,X))$.
\end{proof}

As a corollary we get:

\begin{proposition}
Let $\Lambda $ be a left coherent algebra over a field $\Bbbk $. Then for
any injective $I$ its dual $Hom_{\Bbbk }(I,\Bbbk )$ is flat.
\end{proposition}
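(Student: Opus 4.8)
The plan is to reduce the flatness of $D(I):=Hom_{\Bbbk }(I,\Bbbk )$ to the vanishing of $Tor_{1}$ against finitely presented modules, and then to evaluate that $Tor_{1}$ by means of the preceding Lemma. Recall that a right $\Lambda $-module $N$ is flat if and only if $Tor_{1}^{\Lambda }(N,M)=0$ for every \emph{finitely presented} left $\Lambda $-module $M$: since $Tor$ commutes with direct limits and every left module is a direct limit of finitely presented ones, vanishing on finitely presented modules propagates to all modules, and $Tor_{1}^{\Lambda }(N,-)=0$ is equivalent to flatness. So it suffices to show $Tor_{1}^{\Lambda }(D(I),M)=0$ for each finitely presented left module $M$. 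Writing $D=Hom_{\Bbbk }(-,\Bbbk )$, the Lemma supplies, for every finitely presented left module $N$, a natural isomorphism $D(I)\otimes _{\Lambda }N\cong D(Hom_{\Lambda }(N,I))$; I denote the functor $N\mapsto D(Hom_{\Lambda }(N,I))$ by $G$.

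First I would observe that $G=D\circ Hom_{\Lambda }(-,I)$ is an exact functor: $Hom_{\Lambda }(-,I)$ is exact because $I$ is injective, $D$ is exact because $\Bbbk $ is a field, and the composite of two contravariant exact functors is covariant and exact. Next, fix a finitely presented $M$ and a presentation $P_{1}\rightarrow P_{0}\rightarrow M\rightarrow 0$ with $P_{0},P_{1}$ finitely generated projective, and set $K=\ker (P_{0}\rightarrow M)$. Then $K$ is finitely generated, and being a finitely generated submodule of the finitely generated projective module $P_{0}$ over the \emph{left coherent} ring $\Lambda $, it is finitely presented. Consequently the Lemma applies to both $K$ and $P_{0}$, and its naturality identifies the map $D(I)\otimes _{\Lambda }K\rightarrow D(I)\otimes _{\Lambda }P_{0}$ with $G(K)\rightarrow G(P_{0})$. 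Since $G$ is exact, it carries $0\rightarrow K\rightarrow P_{0}\rightarrow M\rightarrow 0$ to a short exact sequence, so $G(K)\rightarrow G(P_{0})$ is injective; hence $D(I)\otimes _{\Lambda }K\rightarrow D(I)\otimes _{\Lambda }P_{0}$ is injective.

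Finally, the long exact $Tor$-sequence of $0\rightarrow K\rightarrow P_{0}\rightarrow M\rightarrow 0$, together with $Tor_{1}^{\Lambda }(D(I),P_{0})=0$ (as $P_{0}$ is projective), identifies $Tor_{1}^{\Lambda }(D(I),M)$ with the kernel of $D(I)\otimes _{\Lambda }K\rightarrow D(I)\otimes _{\Lambda }P_{0}$, which has just been shown to vanish. Thus $Tor_{1}^{\Lambda }(D(I),M)=0$ and $D(I)$ is flat. The step I expect to be the real content is the identification of $K$ as finitely presented, which is exactly where left coherence enters and is what permits the Lemma to be applied to the syzygy; the remaining ingredients (exactness of $D$ and of $Hom_{\Lambda }(-,I)$, naturality of the Lemma's isomorphism, and the reduction of flatness to finitely presented test modules) are formal.
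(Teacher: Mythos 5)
Your proof is correct and is essentially the paper's own argument: both use left coherence to make the Lemma apply to a finitely presented first syzygy, apply the exact functor $D\circ Hom_{\Lambda }(-,I)$ to the syzygy sequence, and transport the resulting injectivity through the Lemma's natural isomorphism to conclude that the corresponding map of tensor products $D(I)\otimes _{\Lambda }K\rightarrow D(I)\otimes _{\Lambda }P_{0}$ is a monomorphism. The only divergence is the concluding flatness test: the paper specializes to $M=\Lambda /\mathfrak{a}$ with syzygy a finitely generated left ideal $\mathfrak{a}$ (so $P_{0}=\Lambda $) and then cites the ideal criterion for flatness from the literature, whereas you test against arbitrary finitely presented modules and justify the reduction yourself via $Tor_{1}$ and direct limits --- an equivalent, slightly more self-contained finish.
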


\begin{proof}
Let $\mathfrak{a}$ be a finitely generated left ideal of $\Lambda $, by
definition of coherent, $\mathfrak{a}$ is finitely presented, hence by Lemma
1, there is a natural isomorphism: $D(I)\otimes _{\Lambda }\mathfrak{a}\cong
D(Hom_{\Lambda }(\mathfrak{a},I))$.

The exact sequence $0\rightarrow \mathfrak{a}\overset{j}{\rightarrow }%
\Lambda \rightarrow \Lambda /\mathfrak{a}\rightarrow 0$ induces an exact
sequence:

$0\rightarrow Hom_{\Lambda }(\Lambda /\mathfrak{a},I)\rightarrow
Hom_{\Lambda }(\Lambda ,I)\rightarrow Hom_{\Lambda }(\mathfrak{a}%
,I)\rightarrow 0$. We obtain by dualizing it the exact sequence:

$0\rightarrow D(Hom_{\Lambda }(\mathfrak{a},I))\rightarrow D(Hom_{\Lambda
}(\Lambda ,I))\rightarrow D(Hom_{\Lambda }(\Lambda /\mathfrak{a}%
,I))\rightarrow 0$, which is isomorphic to the exact sequence: $D(I)\otimes
_{\Lambda }\mathfrak{a}\overset{1\otimes j}{\rightarrow }D(I)\otimes
_{\Lambda }\Lambda \rightarrow D(I)\otimes _{\Lambda }\Lambda /\mathfrak{a}%
\rightarrow 0$.

Therefore: $1\otimes j$ is a monomorphism.

It follows by [16] Proposition 3.58, that $D(I)$ is flat.
\end{proof}

The Gorenstein property, or more generally the Cohen Macaulay property, is
related with the existence of a dualizing object, in the commutative case
the ring or another bimodule [4], in other instances a dualizing complex,
for example the injective resolution of the ring [17], [19], in the case of
Artin Schelter regular algebras $\Lambda $ considered in [11], the dualizing
object was a shift of $\Lambda $, in the case we are considering the
dualizing object is also a bimodule, but its description is more subtle, we
dedicate the remain of this section to find its structure.

\begin{proposition}
\lbrack 13] Let $\Lambda $ be a graded AS Gorenstein $\Bbbk $-algebra of
graded injective dimension $n$. Then for any graded simple $S_{j}$
concentrated in degree zero, there exists a unique indecomposable projective 
$Q_{\sigma (j)}$ and a non negative integer $n_{j}$ such that $Ext_{\Lambda
}^{n}(S_{j},Q_{\sigma (j)}[-n_{j}])_{0}\neq 0$ and the assignment $%
S_{j}\rightarrow Q_{\sigma (j)}$ is a bijection.
\end{proposition}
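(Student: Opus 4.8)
The plan is to reduce everything to the idempotent decomposition of $\Lambda_{0}$ together with additivity of $Ext$ in its second argument, so that computing $Ext_{\Lambda}^{n}(S_{j},\Lambda)$ as a right module amounts to detecting which indecomposable projective summand of $\Lambda$ carries it.

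First I would set up notation. Since $\Lambda_{0}=\Bbbk\times\cdots\times\Bbbk$ is a finite product of copies of $\Bbbk$, I choose primitive orthogonal idempotents $e_{1},\dots,e_{r}\in\Lambda_{0}$ with $\sum_{i=1}^{r}e_{i}=1$; then $Q_{i}=\Lambda e_{i}$ are, up to shift, all the indecomposable graded projective left modules, and $S_{i}=Q_{i}/mQ_{i}$ (with $m=\Lambda_{\geq 1}$ the graded radical) exhausts the graded simples concentrated in degree zero. The identity $x=\sum_{i}xe_{i}$ exhibits $\Lambda=\bigoplus_{i}Q_{i}$ as a direct sum of \emph{left} modules whose projection onto $Q_{i}$ is right multiplication by $e_{i}$. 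Because this sum is finite and $Ext_{\Lambda}^{n}(S_{j},-)$ is additive over finite direct sums, I obtain a decomposition of graded $\Bbbk$-vector spaces
$$Ext_{\Lambda}^{n}(S_{j},\Lambda)=\bigoplus_{i}Ext_{\Lambda}^{n}(S_{j},Q_{i}),$$
in which the summand $Ext_{\Lambda}^{n}(S_{j},Q_{i})$ is precisely $Ext_{\Lambda}^{n}(S_{j},\Lambda)e_{i}$, the image of right multiplication by $e_{i}$. This identification is what lets me read off the right-module structure even though the decomposition of $\Lambda$ is only one-sided.

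Next I would invoke condition (ii) of the definition of graded AS Gorenstein: $Ext_{\Lambda}^{n}(S_{j},\Lambda)$ is a shift of a graded simple right module $S_{j}'$, hence one-dimensional over $\Bbbk$ and concentrated in a single degree; I take $n_{j}$ to be the non-negative integer normalizing this degree as in the statement. A graded $\Lambda^{op}$-simple is supported at a single idempotent, say $S_{j}'e_{\tau(j)}=S_{j}'$ and $S_{j}'e_{i}=0$ for $i\neq\tau(j)$. Comparing with the decomposition above, exactly one summand is nonzero, the one with $i=\tau(j)$, and it is one-dimensional. Setting $\sigma(j)=\tau(j)$ produces the desired indecomposable projective: $Ext_{\Lambda}^{n}(S_{j},Q_{\sigma(j)})$ is nonzero in the single relevant degree, so $Ext_{\Lambda}^{n}(S_{j},Q_{\sigma(j)}[-n_{j}])_{0}\neq 0$, while $Ext_{\Lambda}^{n}(S_{j},Q_{i})=0$ for every $i\neq\sigma(j)$. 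This gives both existence and uniqueness of $Q_{\sigma(j)}$. For bijectivity of $S_{j}\mapsto Q_{\sigma(j)}$ I would argue by injectivity plus a count: the assignment $S_{j}\mapsto S_{j}'$ is a bijection between the graded simple left and right $\Lambda$-modules, since condition (iii) gives $Ext_{\Lambda^{op}}^{n}(S_{j}',\Lambda)\cong S_{j}$, a left inverse, so the assignment is injective, and there are exactly $r$ graded simples on each side. Identifying a graded right simple with its supporting idempotent, $S_{j}'\mapsto\tau(j)$ is injective, so $\sigma=\tau$ is a bijection of $\{1,\dots,r\}$, i.e.\ a bijection between the graded simples and the indecomposable projectives.

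The step I expect to be the main obstacle is the first one: the decomposition $\Lambda=\bigoplus_{i}Q_{i}$ is a decomposition of left modules only (right multiplication does not preserve the $\Lambda e_{i}$), so the care is in verifying that the vector-space summand $Ext_{\Lambda}^{n}(S_{j},Q_{i})$ agrees with the piece $Ext_{\Lambda}^{n}(S_{j},\Lambda)e_{i}$ cut out by the right action of $e_{i}$. Once this is established, everything else is formal: one-dimensionality and single-degree concentration of the shifted simple force a unique nonzero summand, and the duality of condition (iii) together with the equality of the numbers of simples and of indecomposable projectives upgrades $\sigma$ to a bijection. The only remaining care is the shift bookkeeping fixing the sign of $n_{j}$, which is purely a matter of the degree conventions.
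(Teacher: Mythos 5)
Your proposal is correct, and its core coincides with the paper's: both decompose $\Lambda \cong \bigoplus_{i}Q_{i}$ into indecomposable projectives, use additivity of $Ext_{\Lambda }^{n}(S_{j},-)$, and use condition (ii) --- that $Ext_{\Lambda }^{n}(S_{j},\Lambda )\cong S_{j}^{\prime }[-n_{j}]$ is one-dimensional and concentrated in a single degree --- to force exactly one nonzero summand, which defines $\sigma (j)$ and $n_{j}$. The difference lies in how bijectivity is obtained. The paper proves injectivity of $\sigma $ by rewriting $Ext_{\Lambda }^{n}(S_{j},Q_{\sigma (j)})\cong Ext_{\Lambda }^{n}(S_{j},\Lambda )\otimes _{\Lambda }Q_{\sigma (j)}$, dualizing to get $Hom_{\Lambda ^{op}}(S_{j}^{\prime },D(Q_{\sigma (j)})[n_{j}])\neq 0$, and using that the injective module $D(Q_{\sigma (j)})$ has simple socle, so that $S_{j}^{\prime }\cong D(S_{\sigma (j)})$ is determined by $Q_{\sigma (j)}$; the final step (that $S_{j}^{\prime }\cong S_{k}^{\prime }$ forces $j=k$) is exactly condition (iii), left implicit in the paper. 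You shortcut this by observing that the summand $Ext_{\Lambda }^{n}(S_{j},Q_{i})$ is the image $Ext_{\Lambda }^{n}(S_{j},\Lambda )e_{i}$ of the right action of $e_{i}$, so $\sigma (j)$ is simply the supporting idempotent of the simple $S_{j}^{\prime }$; distinct degree-zero simples have distinct supports, and condition (iii) gives a left inverse to $j\mapsto S_{j}^{\prime }$, so injectivity, and then bijectivity by the count of simples on each side, follow. The two mechanisms carry the same content --- $Ve_{i}\cong V\otimes _{\Lambda }\Lambda e_{i}$, and the socle of $D(\Lambda e_{i})$ is precisely the graded simple right module supported at $e_{i}$ --- but yours is more elementary (no duality $D$, no socle argument) and makes the use of condition (iii) explicit, whereas the paper's route has the side benefit of recording the identification $S_{j}^{\prime }\cong D(S_{\sigma (j)})$, which it exploits afterwards when building the bimodule structure of $I_{n}^{\prime }$. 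Finally, your worry about the sign of the shift is harmless: the paper itself writes $Ext_{\Lambda }^{n}(S_{j},Q_{\sigma (j)}[n_{j}])_{0}\neq 0$ in its proof against $[-n_{j}]$ in the statement, so this is a convention inconsistency in the source, not a gap in your argument.
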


\begin{proof}
We have the following isomorphisms: $Ext_{\Lambda }^{n}(S_{j},\Lambda )=%
\underset{k\in Z}{\oplus }Ext_{\Lambda }^{n}(S_{j},\Lambda )_{k}\cong
S_{j}^{\prime }[-n_{j}]$, where $S_{j}^{\prime }$ has dimension one as $K$%
-vector space. Since the isomorphism is as $\Bbbk $-graded vector spaces, $%
Ext_{\Lambda }^{n}(S_{j},\Lambda )_{k}\neq 0$ implies $k=n_{j}$. The algebra 
$\Lambda $ decomposes in sum of indecomposables $\Lambda \cong \underset{i=1}%
{\overset{m}{\oplus }}Q_{i}$. Then $Ext_{\Lambda }^{n}(S_{j},\Lambda
)_{n_{j}}=\underset{i=1}{\overset{m}{\oplus }}Ext_{\Lambda
}^{n}(S_{j},Q_{i}[n_{j}])_{0}=S_{j}^{\prime }[-n_{j}]$ and there exists a
unique integer $\sigma (j)$ such that $Ext_{\Lambda }^{n}(S_{j},Q_{\sigma
(j)}[n_{j}])_{0}\neq 0$.

We will prove that the function $j\rightarrow \sigma (j)$ is injective,
hence bijective.

Assume for some simple $S_{k}$ there is an isomorphism $Q_{\sigma (j)}\cong
Q_{\sigma (k)}$. But both $Ext_{\Lambda }^{n}(S_{j},Q_{\sigma (j)})\neq 0$
and $Ext_{\Lambda }^{n}(S_{k},Q_{\sigma (k)})\neq 0.$

There are isomorphisms:

$Ext_{\Lambda }^{n}(S_{j},Q_{\sigma (j)})=Ext_{\Lambda }^{n}(S_{j},\Lambda
)\otimes _{\Lambda }Q_{\sigma (j)}\cong S_{j}^{\prime }\otimes _{\Lambda
}Q_{\sigma (j)}[-n_{j}]$ and $Ext_{\Lambda }^{n}(S_{k},Q_{\sigma (k)})$%
\linebreak $=Ext_{\Lambda }^{n}(S_{k},\Lambda )\otimes _{\Lambda }Q_{\sigma
(k)}\cong S_{k}^{\prime }\otimes _{\Lambda }Q_{\sigma (k)}[-n_{k}].$

After dualizing we obtain the following isomorphisms:

$D(S_{j}^{\prime }\otimes _{\Lambda }Q_{\sigma (j)}$[-n$_{j}$])=$%
Hom_{\Lambda ^{op}}(S_{j}^{\prime },D(Q_{\sigma (j)})[n_{j}])\neq $0 and D(S$%
_{k}^{\prime }\otimes _{\Lambda }$Q$_{\sigma (k)}$[-n$_{k}$])\linebreak $%
=Hom_{\Lambda ^{op}}(S_{k}^{\prime },D(Q_{\sigma (k)})[n_{k}])\neq 0$.

Since $Q_{\sigma (j)}\cong Q_{\sigma (k)}$ and $D(Q_{\sigma (j)})\cong
D(Q_{\sigma (k)})$ has simple socle, it follows $S_{j}^{\prime }\cong
S_{k}^{\prime }$, in fact $D(S_{j}^{\prime })\cong Q_{\sigma (j)}/JQ_{\sigma
(j)}\cong S_{\sigma (j)}$.
\end{proof}

We have proved $Ext_{\Lambda }^{n}(S_{j},\Lambda )\cong D(S_{\sigma
(j)})[-n_{j}].$

What we want to do next is to prove that $\overset{m}{\underset{j=1}{\oplus }%
}Q_{\sigma (j)}[n_{j}]$ has a structure of $\Lambda $-$\Lambda $ bimodule
and it is projective both as left and as right $\Lambda $-module.

Let $Q_{k}$ be an indecomposable projective $\Lambda $-module with injective
resolution:

$0\rightarrow Q_{k}\rightarrow I_{0}^{(k)}\rightarrow I_{1}^{(k)}\rightarrow
I_{2}^{(k)}\rightarrow ...I_{n-1}^{(k)}\rightarrow I_{n}^{(k)}\rightarrow
I_{n+1}^{(k)}\rightarrow ...$

Here $I_{n}^{(k)}$ decomposes as $I_{n}^{(k)}=$ $I_{n}^{\prime (k)}\oplus $ $%
I_{n}^{\prime \prime (k)}$, where $I_{n}^{\prime (k)}$ is torsion and $%
I_{n}^{\prime \prime (k)}$ is torsion free in the sense of $[M]$.

Then $0\neq $ $Ext_{\Lambda }^{n}(S_{\sigma ^{-1}(k)}[-n_{_{\sigma
^{-1}(k)}}],Q_{k})_{0}=Hom_{\Lambda }(S_{\sigma ^{-1}(k)}[-n_{_{\sigma
^{-1}(k)}}],I_{n}^{(k)})_{0}=Hom_{\Lambda }(S_{\sigma ^{-1}(k)}[-n_{_{\sigma
^{-1}(k)}}],I_{n}^{\prime (k)})_{0}=D(S_{k})[-n_{\sigma ^{-1}(k)}]$.

Therefore: $I_{n}^{\prime (k)}=D(Q_{\sigma ^{-1}(k)}^{\ast })[-n_{_{\sigma
^{-1}(k)}}]$.

Since $\Lambda $ has a decomposition $\Lambda \cong \underset{i=1}{\overset{m%
}{\oplus }}Q_{i}$, it has an injective resolution:

$0\rightarrow \Lambda \rightarrow I_{0}\rightarrow I_{1}\rightarrow
I_{2}\rightarrow ...I_{n-1}\rightarrow I_{n}\rightarrow I_{n+1}\rightarrow
...$ with $I_{j}=\underset{k=1}{\overset{m}{\oplus }}I_{j}^{(k)}$ and $%
I_{n}=I_{n}^{\prime }\oplus I_{n}^{\prime \prime }$, with $I_{n}^{\prime }=%
\underset{k=1}{\overset{m}{\oplus }}I_{n}^{\prime (k)}$ and $I_{n}^{\prime
\prime }=\underset{k=1}{\overset{m}{\oplus }}I_{n}^{\prime \prime (k)}$.

The module $I_{n}^{\prime \prime }$ is torsion free and $I_{n}^{\prime }=%
\underset{k=1}{\overset{m}{\oplus }}D(Q_{\sigma ^{-1}(k)}^{\ast
})[-n_{_{\sigma ^{-1}(k)}}]=\underset{k=1}{\overset{m}{\oplus }}%
D(Q_{j}^{\ast })[-n_{j}]$, after shifting.

We give $I_{n}^{\prime }$ a structure of $\Lambda $-$\Lambda $ bimodule as
follows:

Let $\lambda \in \Lambda $ be a non zero homogeneous element of $\Lambda $
and $\phi _{\lambda }$ the map $\phi _{\lambda }:\Lambda \rightarrow \Lambda 
$ given by right multiplication, $\phi _{\lambda }(x)=x\lambda $.

This map extends to a map of resolutions:

$%
\begin{array}{ccccccccccc}
0\rightarrow & \Lambda & \overset{\varepsilon }{\rightarrow } & I_{0} & 
\overset{d_{0}}{\rightarrow } & I_{1} & ... & I_{n} & \rightarrow & I_{n+1}
& ... \\ 
& \phi _{\lambda }\downarrow &  & \phi _{\lambda }^{0}\downarrow &  & \phi
_{\lambda }^{1}\downarrow &  & \phi _{\lambda }^{n}\downarrow &  & \phi
_{\lambda }^{n+1}\downarrow &  \\ 
0\rightarrow & \Lambda & \overset{\varepsilon }{\rightarrow } & I_{0} & 
\overset{d_{0}}{\rightarrow } & I_{1} & ... & I_{n} & \rightarrow & I_{n+1}
& ...%
\end{array}%
$

Since $I_{n}^{\prime \prime }$ is torsion free $Hom_{\Lambda }(I_{n}^{\prime
},I_{n}^{\prime \prime })=0$ and $\phi _{\lambda }^{n}:I_{n}^{\prime }\oplus
I_{n}^{\prime \prime }\rightarrow I_{n}^{\prime }\oplus I_{n}^{\prime \prime
}$ has triangular form: $\phi _{\lambda }^{n}=\left[ 
\begin{array}{cc}
\overset{\wedge }{\phi }_{\lambda } & \rho \\ 
0 & \sigma%
\end{array}%
\right] .$

We claim the map: $\overset{\wedge }{\phi }_{\lambda }:I_{n}^{\prime
}\rightarrow I_{n}^{\prime }$ is unique.

Assume $(\psi _{\lambda }^{0},$ $\psi _{0}^{1},\psi _{\lambda }^{2}...\psi
_{\lambda }^{n},\psi _{\lambda }^{n+1}...)$ is another lifting of $\phi
_{\lambda }$.

The map $\psi _{\lambda }^{n}$ has triangular form $\psi _{\lambda }^{n}=%
\left[ 
\begin{array}{cc}
\overset{\wedge }{\psi }_{\lambda } & \overline{\rho } \\ 
0 & \overline{\sigma }%
\end{array}%
\right] $

Then there exists a homotopy $\{s_{k}\}$, $s_{k}:I_{k}\rightarrow I_{k-1}$
such that $\phi _{\lambda }^{n}=d_{n-1}s_{n}+s_{n+1}d_{n}+\psi _{\lambda
}^{n}$.

There are decompositions: $s_{n}=(s_{n}^{\prime }$, $s_{n}^{\prime \prime })$%
, $d_{n}=(d_{n}^{\prime }$,$d_{n}^{\prime \prime })$, $s_{n+1}=\left[ 
\begin{array}{c}
s_{n+1}^{\prime } \\ 
s_{n+1}^{\prime \prime }%
\end{array}%
\right] $ and $d_{n-1}=\left[ 
\begin{array}{c}
d_{n-1}^{\prime } \\ 
d_{n-1}^{\prime \prime }%
\end{array}%
\right] .$

It follows: $d_{n-1}s_{n}+s_{n+1}d_{n}=\left[ 
\begin{array}{c}
d_{n-1}^{\prime } \\ 
d_{n-1}^{\prime \prime }%
\end{array}%
\right] (s_{n}^{\prime }$, $s_{n}^{\prime \prime })+\left[ 
\begin{array}{c}
s_{n+1}^{\prime } \\ 
s_{n+1}^{\prime \prime }%
\end{array}%
\right] (d_{n}^{\prime }$,$d_{n}^{\prime \prime }).$

Since $I_{n-1}$and $I_{n+1}$are torsion free, $s_{n}^{\prime }=0$ and $%
d_{n}^{\prime }=0$ and $d_{n-1}s_{n}+s_{n+1}d_{n}=\left[ 
\begin{array}{cc}
0 & d_{n-1}^{\prime }s_{n}^{\prime \prime }+s_{n+1}^{\prime }d_{n}^{\prime
\prime } \\ 
0 & d_{n-1}^{\prime \prime }s_{n}^{\prime \prime }+s_{n+1}^{\prime \prime
}d_{n}^{\prime \prime }%
\end{array}%
\right] .$

It follows: $\phi _{\lambda }^{n}=\left[ 
\begin{array}{cc}
\overset{\wedge }{\phi }_{\lambda } & \rho \\ 
0 & \sigma%
\end{array}%
\right] =$ $\left[ 
\begin{array}{cc}
\overset{\wedge }{\psi }_{\lambda } & \overline{\rho }+d_{n-1}^{\prime
}s_{n}^{\prime \prime }+s_{n+1}^{\prime }d_{n}^{\prime \prime } \\ 
0 & \overline{\sigma }+d_{n-1}^{\prime \prime }s_{n}^{\prime \prime
}+s_{n+1}^{\prime \prime }d_{n}^{\prime \prime }%
\end{array}%
\right] .$

Therefore: $\overset{\wedge }{\phi _{\lambda }}=\overset{\wedge }{\psi }%
_{\lambda }$.

\bigskip It is clear that there is a ring homomorphism $\Lambda \rightarrow
End_{\Lambda }(I_{n}^{\prime })$ given by $\lambda \rightarrow \overset{%
\wedge }{\phi _{\lambda }}$ which gives $I_{n}^{\prime }$ the structure of $%
\Lambda $-$\Lambda $ bimodule. Since $\Lambda e_{k}=Q_{k}$ with $e_{k}$ an
idempotent, multiplying by $e_{k}$ on the right is just the projection and
we have a commutative exact diagram:

$%
\begin{array}{ccccccccccc}
0\rightarrow & \Lambda & \overset{\varepsilon }{\rightarrow } & I_{0} & 
\overset{d_{0}}{\rightarrow } & I_{1} & ... & I_{n} & \rightarrow & I_{n+1}
& ... \\ 
& \phi _{e_{k}}\downarrow &  & \phi _{e_{k}}^{0}\downarrow &  & \phi
_{e_{k}}^{1}\downarrow &  & \phi _{e_{k}}^{n}\downarrow &  & \phi
_{e_{k}}^{n+1}\downarrow &  \\ 
0\rightarrow & \Lambda e_{k} & \rightarrow & I_{0}^{(k)} & \overset{d_{0}^{k}%
}{\rightarrow } & I_{1}^{(k)} & ... & I_{n}^{(k)} & \rightarrow & 
I_{n+1}^{(k)} & ...%
\end{array}%
$

where $\Lambda =\Lambda e_{k}\oplus \Lambda (1-e_{k}).$

Let $0\rightarrow \Lambda (1-e_{k})\rightarrow I_{0}^{(\ell )}\rightarrow
I_{1}^{(\ell )}\rightarrow I_{2}^{(\ell )}\rightarrow ...I_{n-1}^{(\ell
)}\rightarrow I_{n}^{(\ell )}\rightarrow I_{n+1}^{(\ell )}\rightarrow ...$

be the injective resolution of $\Lambda (1-e_{k})$ and $p_{j}:$ $%
I_{j}^{(k)}\oplus I_{j}^{(\ell )}\rightarrow I_{j}^{(k)}$ be the projection.
The $p_{j}=\phi _{e_{k}}^{j}.$

It is clear that $I_{n}=I_{n}^{\prime }\oplus I_{n}^{\prime \prime }$ and $%
I_{n}^{\prime }=I_{n}^{\prime (k)}\oplus I_{n}^{\prime (\ell )}$, $%
I_{n}^{\prime \prime }=I_{n}^{\prime \prime (k)}\oplus I_{n}^{\prime \prime
(\ell )}$. Therefore: $\overset{\wedge }{\phi }_{e_{k}}:I_{n}^{\prime
}\rightarrow I_{n}^{\prime (k)}$ is the projection.

Let $f:\Lambda \rightarrow \Lambda $ be a morphism of left $\Lambda $%
-modules and $j_{\ell }:\Lambda e_{\ell }\rightarrow \Lambda $, $%
p_{k}:\Lambda \rightarrow \Lambda e_{k}$ be the inclusion and the
projection, $f$ has matrix form $f=(f_{k\ell })$, with $f_{k\ell
}=p_{k}fj_{\ell }$. Any map $g:\Lambda e_{\ell }\rightarrow \Lambda e_{k}$
can be extended to a map $\overset{\wedge }{g}:I_{n}^{\prime (\ell
)}\rightarrow I_{n}^{\prime (k)}$ in particular $f_{k\ell }$ extends to a
map $\overset{\wedge }{f}_{k\ell }:I_{n}^{\prime (\ell )}\rightarrow
I_{n}^{\prime (k)}$and $f$ extends to the map $\overset{\wedge }{f}=(\overset%
{\wedge }{f}_{k\ell })$.

Consider now the opposite ring $\Lambda ^{op}.$

For each $\Lambda ^{op}$-module simple $S_{j}^{\prime }$ we have $%
Ext_{\Lambda ^{op}}^{n}(S_{j}^{\prime },\Lambda )=D(S_{\tau (j)}^{\prime
})[-m_{j}]$, but $Ext_{\Lambda }^{n}(S_{j},\Lambda )[n_{j}]=S_{j}^{\prime }$%
, hence; $Ext_{\Lambda ^{op}}^{n}(S_{j}^{\prime },\Lambda )=Ext_{\Lambda
^{op}}^{n}(Ext_{\Lambda }^{n}(S_{j},\Lambda )[n_{j}],\Lambda )=$\linebreak $%
Ext_{\Lambda ^{op}}^{n}(Ext_{\Lambda }^{n}(S_{j},\Lambda ),\Lambda
)[-n_{j}]=S_{j}[-n_{j}].$

Therefore: $m_{j}=n_{j}$ and $D(S_{\tau (j)}^{\prime })=S_{j}=S_{\sigma \tau
(j)}.$It follows $\tau =\sigma ^{-1}.$

In a similar way we have a right injection resolution:

$0\rightarrow Q_{k}^{\ast }=e_{k}\Lambda \rightarrow J_{0}^{(k)}\rightarrow
J_{1}^{(k)}\rightarrow J_{2}^{(k)}\rightarrow ...J_{n-1}^{(k)}\rightarrow
J_{n}^{(k)}\rightarrow J_{n+1}^{(k)}\rightarrow ...$

and $\Lambda $ has a right injective resolution:

$0\rightarrow \Lambda \rightarrow \overset{m}{\underset{k=1}{\oplus }}%
J_{0}^{(k)}\rightarrow \overset{m}{\underset{k=1}{\oplus }}%
J_{1}^{(k)}\rightarrow \overset{m}{\underset{k=1}{\oplus }}%
J_{2}^{(k)}\rightarrow ...\overset{m}{\underset{k=1}{\oplus }}%
J_{n-1}^{(k)}\rightarrow \overset{m}{\underset{k=1}{\oplus }}%
J_{n}^{(k)}\rightarrow \overset{m}{\underset{k=1}{\oplus }}J_{n+1}^{(k)}...$

$J_{n}=J_{n}^{\prime }\oplus J_{n}^{\prime \prime }$ with $J_{n}^{\prime }$
torsion and $J_{n}^{\prime \prime }$ torsion free $J_{n}^{\prime }=\overset{m%
}{\underset{k=1}{\oplus }}J_{n}^{\prime (k)}$, $J_{n}^{\prime \prime }=$ $%
\overset{m}{\underset{k=1}{\oplus }}J%
{\acute{}}%
{\acute{}}%
_{n}^{(k)}.$

Similarly to the left sided situation, $J_{n}^{\prime (k)}=D(Q_{\sigma
(k)})[-n_{\sigma (k)}]$.

Given a map $g:\Lambda e_{\ell }\rightarrow \Lambda e_{k}$, there is a map $%
\overset{\wedge }{g}:$D(Q$_{\tau (\ell )}^{\ast }$)[-n$_{\tau (\ell )}$]$%
\rightarrow $D(Q$_{\tau (k)}^{\ast }$)[-n$_{\tau (k)}$], hence a map $(D$($%
\overset{\wedge }{g}))^{\ast }:P_{\tau (\ell )}[-n_{\tau (\ell
)}]\rightarrow P_{\tau (k)}[-n_{\tau (k)}].$

Write $\tau (g)=$ $(D$($\overset{\wedge }{g}))^{\ast }$.

We want to prove that $\tau $ is an equivalence from the category of
indecomposable projective left $\Lambda $-modules concentrated in degree
zero, to the category of indecomposable projective $\{Q_{j}[-n_{j}]\}$ $%
\Lambda $-modules with shift.

Given a map $f:e_{k}\Lambda \rightarrow e_{\ell }\Lambda $, given by $%
f(e_{k})=e_{\ell }\lambda e_{k}$, there is a map $\overset{\wedge }{f}%
:D(Q_{\sigma (k)})[-n_{\sigma (k)}]\rightarrow $ $D(Q_{\sigma (\ell
)})[-n_{\sigma (\ell )}]$ and ($D($ $\overset{\wedge }{f}))^{\ast
}:Q_{\sigma (k)}^{\ast }[-n_{\sigma (k)}]\rightarrow Q_{\sigma (\ell
)}^{\ast }[-n_{\sigma (\ell )}]$.

We write $\sigma (f)=$($D($ $\overset{\wedge }{f}))^{\ast }$.

We claim $f=(\tau ((\sigma (f))^{\ast })^{\ast }$.

The multiplication map $\mu :e_{k}\Lambda \otimes _{\Lambda }J_{n}^{\prime
}\rightarrow e_{k}J_{n}^{\prime }$ is an isomorphism. Let $\overline{f}=\mu
(f\otimes 1)\mu ^{-1}:e_{k}J_{n}^{\prime }\rightarrow e_{k}J_{n}^{\prime }$
be the induced map. Then $\overset{\wedge }{f}(e_{k}y)=e_{\ell }\lambda
e_{k}y=\overline{f}(e_{k}y).$Therefore: $\overset{\wedge }{f}=D((\sigma
(f)^{\ast })=\overline{f}.$

By definition of $\mu $, the diagram: $%
\begin{array}{ccc}
Q_{k}^{\ast }\otimes J_{n}^{\prime } & \overset{\mu }{\rightarrow } & 
e_{k}J_{n}^{\prime } \\ 
\downarrow f\otimes 1 &  & \downarrow \overline{f} \\ 
Q_{\ell }^{\ast }\otimes J_{n}^{\prime } & \overset{\mu }{\rightarrow } & 
e_{\ell }J_{n}^{\prime }%
\end{array}%
$, commutes.

\bigskip Let $M$ be a left $\Lambda $-module of finite length. It follows by
induction that $X=Hom_{\Lambda }(M,I_{n}^{\prime })$, in fact $%
X=Ext_{\Lambda }^{n}(M,\Lambda )$.

We have a chain of isomorphisms:

$Q_{k}^{\ast }\otimes M\cong Ext_{\Lambda ^{op}}^{n}(Ext_{\Lambda
}^{n}(M,\Lambda ),Q_{k}^{\ast })\cong Q_{k}^{\ast }\otimes Hom_{\Lambda
^{op}}(X,J_{n}^{\prime })\cong e_{k}Hom_{\Lambda ^{op}}(X,J_{n}^{\prime })$%
\linebreak $\cong Hom_{\Lambda ^{op}}(X,e_{k}J_{n}^{\prime })\cong
Hom_{\Lambda ^{op}}(X,Q_{k}^{\ast }\otimes J_{n}^{\prime })$.

We have a commutative diagram:

\begin{center}
*) $%
\begin{array}{ccc}
\text{Q}_{k}^{\ast }\otimes \text{Hom}_{\Lambda ^{op}}\text{(X,J}%
_{n}^{\prime }\text{)}\cong & \text{Hom}_{\Lambda ^{op}}\text{(X,e}_{k}\text{%
J}_{n}^{\prime }\text{)}\cong & \text{Hom}_{\Lambda ^{op}}\text{(X,D(Q}%
_{\sigma (k)}\text{))[-n}_{\sigma (k)}\text{]} \\ 
\downarrow \text{f}\otimes \text{1} & \downarrow \text{(X,f)} & \downarrow 
\text{(X,D((}\sigma \text{(f)}^{\ast }\text{))} \\ 
\text{Q}_{\ell }^{\ast }\otimes \text{Hom}_{\Lambda ^{op}}\text{(X,J}%
_{n}^{\prime }\text{)}\cong & \text{Hom}_{\Lambda ^{op}}\text{(X,e}_{\ell }%
\text{J}_{n}^{\prime }\text{)}\cong & \text{Hom}_{\Lambda ^{op}}\text{(X,D(Q}%
_{\sigma (\ell )}\text{))[-n}_{\sigma (\ell )}\text{]}%
\end{array}%
$
\end{center}

We also have in the opposite ring a commutative diagram:

\begin{center}
**) $%
\begin{array}{ccc}
\text{Hom}_{\Lambda }\text{(M,I}_{n}^{\prime }\text{)}\otimes Q_{\sigma
(\ell )}\cong & \text{Hom}_{\Lambda }\text{(M,I}_{n}^{\prime }e_{\sigma
(\ell )}\text{)}\cong & \text{Hom}_{\Lambda }\text{(M,D(Q}_{\tau \sigma
(\ell )}^{\ast }\text{))[-n}_{\ell }\text{]} \\ 
\downarrow \text{1}\otimes \sigma (f)^{\ast } & \downarrow \text{(M,}%
\overline{\sigma (\text{f)}^{\ast }} & \downarrow \text{(M,D((}\tau \text{(}%
\sigma \text{(f)}^{\ast }\text{))}^{\ast }\text{)} \\ 
\text{Hom}_{\Lambda }\text{(M,I}_{n}^{\prime }\text{)}\otimes Q_{\sigma
(k)}\cong & \text{Hom}_{\Lambda }\text{(M,I}_{n}^{\prime }e_{\sigma (k)}%
\text{)}\cong & \text{Hom}_{\Lambda }\text{(M,D(Q}_{\tau \sigma (k)}^{\ast }%
\text{))[-n}_{k}\text{]}%
\end{array}%
$
\end{center}

Dualizing $\ast )$ we get the commutative diagram:

\begin{center}
$%
\begin{array}{cc}
\text{D(Hom}_{\Lambda ^{op}}\text{(X,D(Q}_{\sigma (\ell )}\text{)))[n}%
_{\sigma (\ell )}\text{]}\cong & \text{D(Q}_{\ell }^{\ast }\otimes \text{Hom}%
_{\Lambda ^{op}}\text{(X,J}_{n}^{\prime }\text{))} \\ 
\downarrow \text{D(X,D((}\sigma \text{(f)}^{\ast })\text{)} & \downarrow 
\text{D(f}\otimes \text{1)} \\ 
\text{D(Hom}_{\Lambda ^{op}}\text{(X,D(Q}_{\sigma (k)}\text{)))[n}_{\sigma
(k)}\text{]}\cong & \text{D(Q}_{k}^{\ast }\otimes \text{Hom}_{\Lambda ^{op}}%
\text{(X,J}_{n}^{\prime }\text{))}%
\end{array}%
$
\end{center}

and using the isomorphism: Hom$_{\Lambda ^{op}}$(X,D(Q$_{r}$))$\cong $D(X$%
\otimes $Q$_{r}$) we get the commutative diagrams:

\begin{center}
$%
\begin{array}{cc}
\text{D(Hom}_{\Lambda ^{op}}\text{(X,D(Q}_{\sigma (\ell )}\text{)))[n}%
_{\sigma (\ell )}\text{]} & \text{X}\otimes \text{Q}_{\sigma (\ell )}\text{[n%
}_{\sigma (\ell )}\text{]} \\ 
\downarrow \text{D(X,D((}\sigma \text{(f)}^{\ast })\text{)} & \downarrow 
\text{1}\otimes \sigma \text{(f)}^{\ast } \\ 
\text{D(Hom}_{\Lambda ^{op}}\text{(X,D(Q}_{\sigma (k)}\text{)))[n}_{\sigma
(k)}\text{]} & \text{X}\otimes \text{Q}_{\sigma (k)}\text{[n}_{\sigma (k)}%
\text{]}%
\end{array}%
$
\end{center}

\bigskip

\begin{center}
$%
\begin{array}{cc}
\text{D(Q}_{\ell }^{\ast }\otimes \text{Hom}_{\Lambda ^{op}}\text{(X,J}%
_{n}^{\prime }\text{))}\cong & \text{Hom}_{\Lambda }\text{(Hom}_{\Lambda
^{op}}\text{(X,J}_{n}^{\prime }\text{),D(Q}_{\ell }^{\ast }\text{))} \\ 
\downarrow \text{D(f}\otimes \text{1)} & \downarrow \text{(Hom}_{\Lambda
^{op}}\text{(X,J}_{n}^{\prime }\text{),D(f))} \\ 
\text{D(Q}_{k}^{\ast }\otimes \text{Hom}_{\Lambda ^{op}}\text{(X,J}%
_{n}^{\prime }\text{))}\cong & \text{Hom}_{\Lambda }\text{(Hom}_{\Lambda
^{op}}\text{(X,J}_{n}^{\prime }\text{),D(Q}_{k}^{\ast }\text{))}%
\end{array}%
$
\end{center}

Therefore: there is a commutative diagram:

\begin{center}
$%
\begin{array}{cc}
\text{X}\otimes \text{Q}_{\sigma (\ell )}\text{[n}_{\sigma (\ell )}\text{]}%
\cong & \text{Hom}_{\Lambda }\text{(Hom}_{\Lambda ^{op}}\text{(X,J}%
_{n}^{\prime }\text{),D(Q}_{\ell }^{\ast }\text{))} \\ 
\downarrow \text{1}\otimes \sigma \text{(f)}^{\ast } & \downarrow \text{(Hom}%
_{\Lambda ^{op}}\text{(X,J}_{n}^{\prime }\text{),D(f))} \\ 
\text{X}\otimes \text{Q}_{\sigma (k)}\text{[n}_{\sigma (k)}\text{]}\cong & 
\text{Hom}_{\Lambda }\text{(Hom}_{\Lambda ^{op}}\text{(X,J}_{n}^{\prime }%
\text{),D(Q}_{k}^{\ast }\text{))}%
\end{array}%
$
\end{center}

From the last diagram and diagram: **) we obtain D(f))=D(($\tau $($\sigma $%
(f)$^{\ast }$))$^{\ast }$). Therefore: f=($\tau $($\sigma $(f)$^{\ast }$))$%
^{\ast }$, as claimed.

Similarly we get $g=(\sigma ((\tau (g)^{\ast }))^{\ast }$.

It follows from these equalities that $\sigma $ is an equivalence:

Assume $\sigma (f)=\sigma (g)$. Then $\tau ($ $\sigma (f)^{\ast })^{\ast
}=\tau (\sigma (g)^{\ast })^{\ast }$ implies $f=g$.

If $h:Q_{k}^{\ast }\rightarrow Q_{\ell }^{\ast }$ is a map, let $f$ be the
map $f=(\tau (h^{\ast }))^{\ast }$. Then $(\sigma (\tau (h^{\ast }))^{\ast
})^{\ast }=h^{\ast }=\sigma (f)^{\ast }$.

It follows $\sigma (f)=h$.

We have proved $\sigma $, $\tau $ are equivalences.

Then there is a commutative triangle:

\begin{center}
$%
\begin{array}{ccc}
\Lambda \cong End_{\Lambda }(\Lambda )^{op} & \overset{\alpha }{\rightarrow }
& End_{\Lambda }(\overset{m}{\underset{k=1}{\oplus }}D(Q_{\tau (k)}^{\ast
}))[-n_{\tau (k)}])^{op} \\ 
& \searrow \tau & \downarrow (D(-))^{\ast } \\ 
&  & End_{\Lambda }(\overset{m}{\underset{k=1}{\oplus }}Q_{\tau
(k)})[-n_{\tau (k)}])^{op}%
\end{array}%
$
\end{center}

Where $\alpha ((f_{ij}))=(\overset{\wedge }{f}_{ij})$. Since $D(-))^{\ast }$%
and $\tau $ are isomorphism, $\alpha $ is also an isomorphism.

Applying the duality we have a ring isomorphism: $\Lambda \cong End_{\Lambda
^{op}}(D(\Lambda ))\rightarrow End_{\Lambda ^{op}}(\overset{m}{\underset{k=1}%
{\oplus }}Q_{\tau (k)}^{\ast })[n_{\tau (k)}])$.

In order to continue the study of the bimodule structure of $I_{n}^{\prime }$
we need the following:

\begin{lemma}
Let $\Lambda $ be a positively graded locally finite $\Bbbk $-algebra, $E$ a
finitely cogenerated injective left $\Lambda $-module and $\{M_{\alpha },\pi
_{\alpha }\}_{\alpha \in A}$ an inverse system of locally finite graded left 
$\Lambda $-modules such that $M=\underleftarrow{\lim }M_{\alpha }$ is
locally finite. Then we have a natural isomorphism: $Hom_{\Lambda }(%
\underleftarrow{\lim }M_{\alpha },E)=\underrightarrow{\lim }$ $Hom_{\Lambda
}(M_{\alpha },E)$.
\end{lemma}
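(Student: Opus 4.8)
The plan is to reduce the statement to a purely linear-algebra fact about inverse systems of finite-dimensional vector spaces, exploiting the duality $D$ and the tensor-hom adjunction. First I would note that a finitely cogenerated injective $E$ lies in $l.f.Gr_{\Lambda }$: it is the injective envelope of its finite length socle, hence a finite direct sum of injective envelopes of graded simples, each of which is locally finite. Since $D$ is a duality exchanging injectives with projectives and ``finitely cogenerated'' with ``finitely generated'', the right module $P:=D(E)$ is finitely generated projective, and double duality gives $E\cong D(P)=Hom_{\Bbbk }(P,\Bbbk )$.

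The structural identity I would use is the tensor-hom adjunction: for every left module $N$ there is an isomorphism $Hom_{\Lambda }(N,E)=Hom_{\Lambda }(N,Hom_{\Bbbk }(P,\Bbbk ))\cong Hom_{\Bbbk }(P\otimes _{\Lambda }N,\Bbbk )=D(P\otimes _{\Lambda }N)$, natural in $N$. Applying this to $N=\underleftarrow{\lim }M_{\alpha }$ and to each $M_{\alpha }$, and using naturality to match the two sides of the canonical comparison map, the claim becomes $D(P\otimes _{\Lambda }\underleftarrow{\lim }M_{\alpha })\cong \underrightarrow{\lim }\,D(P\otimes _{\Lambda }M_{\alpha })$. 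I would split this into two independent steps. \emph{Step B}: because $P$ is finitely generated projective it is a direct summand of some $\Lambda ^{n}$, so the comparison map for $P\otimes _{\Lambda }-$ is a direct summand of that for $\Lambda ^{n}\otimes _{\Lambda }-=(-)^{n}$, which is an isomorphism since finite direct sums commute with inverse limits; a direct summand of an isomorphism is an isomorphism, so $P\otimes _{\Lambda }\underleftarrow{\lim }M_{\alpha }\cong \underleftarrow{\lim }(P\otimes _{\Lambda }M_{\alpha })$. Writing $N_{\alpha }:=P\otimes _{\Lambda }M_{\alpha }$, which is locally finite with locally finite inverse limit $P\otimes _{\Lambda }M$ (a quotient of $M^{n}$), it remains to prove \emph{Step A}: $D(\underleftarrow{\lim }N_{\alpha })\cong \underrightarrow{\lim }\,D(N_{\alpha })$.

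Step A is the heart of the argument, and I would carry it out degree by degree, since both inverse and direct limits of graded modules are computed in each degree. Fixing $j$ and setting $W_{\alpha }:=(N_{\alpha })_{-j}$, a finite-dimensional $\Bbbk $-space, the assertion reduces to: if $W:=\underleftarrow{\lim }W_{\alpha }$ is finite-dimensional, then the canonical map $\underrightarrow{\lim }\,W_{\alpha }^{\ast }\to W^{\ast }$ of $\Bbbk $-linear duals is an isomorphism. The essential point is that an inverse system of finite-dimensional vector spaces automatically satisfies the Mittag-Leffler condition: the stable images $W_{\alpha }^{\prime }:=\bigcap _{\beta }\operatorname{Im}(W_{\beta }\to W_{\alpha })$ stabilize, have surjective transition maps, and satisfy $\underleftarrow{\lim }W_{\alpha }^{\prime }=W$. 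Finite-dimensionality of $W$ then forces $\dim W_{\alpha }^{\prime }$ to stabilize, so $W\cong W_{\alpha _{0}}^{\prime }$ for some $\alpha _{0}$ and $\operatorname{Im}(W\to W_{\alpha })=W_{\alpha }^{\prime }=\operatorname{Im}(W_{\beta }\to W_{\alpha })$ for $\beta $ large. Surjectivity of the canonical map follows by extending any functional on $W\cong W_{\alpha _{0}}^{\prime }\subseteq W_{\alpha _{0}}$ to all of $W_{\alpha _{0}}$; injectivity follows because a functional on $W_{\alpha }$ vanishing on $\operatorname{Im}(W\to W_{\alpha })=\operatorname{Im}(W_{\beta }\to W_{\alpha })$ already becomes zero after composing with $W_{\beta }\to W_{\alpha }$, hence dies in the direct limit.

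I expect the main obstacle to be precisely Step A, and in particular the need to use the \emph{ordinary} direct limit of vector spaces rather than a colimit computed inside $l.f.Gr_{\Lambda }$. One cannot merely invoke ``a duality interchanges limits and colimits'', because direct limits of locally finite modules need not be locally finite, so the colimit internal to $l.f.Gr_{\Lambda }$ can differ from the ambient one. The Mittag-Leffler bookkeeping, fed by the hypothesis that $M=\underleftarrow{\lim }M_{\alpha }$ (and hence each $(N_{\alpha })_{-j}$) has finite-dimensional inverse limit, is exactly what guarantees that the ambient direct limit is already the correct object and that the canonical comparison map is bijective.
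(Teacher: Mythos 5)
Your opening reduction is exactly how the paper begins: it too observes that $E$ is locally finite with $P=D(E)$ finitely generated projective, and uses the adjunction $Hom_{\Lambda }(N,E)\cong D(P\otimes _{\Lambda }N)$; your Step B is the paper's step $D(E)\otimes \varprojlim M_{\alpha }\cong Hom_{\Lambda }(D(E)^{\ast },\varprojlim M_{\alpha })\cong \varprojlim Hom_{\Lambda }(D(E)^{\ast },M_{\alpha })$ in different clothing (summand-of-free versus $P\otimes -\cong Hom_{\Lambda }(P^{\ast },-)$). The genuine divergence is your Step A. The paper never proves the ``hard'' interchange $D(\varprojlim N_{\alpha })\cong \varinjlim D(N_{\alpha })$ at all: having identified $D(Hom_{\Lambda }(\varprojlim M_{\alpha },E))$ with $\varprojlim D(Hom_{\Lambda }(M_{\alpha },E))$, it invokes only the always-valid direction $\varprojlim D(X_{\alpha })\cong D(\varinjlim X_{\alpha })$ (contravariant Hom turns colimits into limits) and then strips the outer $D$ by double duality, using that $Hom_{\Lambda }(M,E)$ and $\varinjlim Hom_{\Lambda }(M_{\alpha },E)$ are locally finite. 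You instead prove the hard direction directly, degree by degree, via Mittag--Leffler. Both routes are sound, and yours has two merits: it tracks the canonical comparison map explicitly (the paper's chain of abstract isomorphisms leaves naturality, and the local finiteness of $\varinjlim Hom_{\Lambda }(M_{\alpha },E)$ needed for the final dualization, implicit), and it isolates precisely where the hypothesis that $\varprojlim M_{\alpha }$ is locally finite enters. The one claim you should justify is that finite-dimensionality of $W$ forces $\dim W_{\alpha }^{\prime }$ to stabilize: this needs the projections $W\rightarrow W_{\alpha }^{\prime }$ to be surjective, which is elementary when the index set has a countable cofinal chain (the case arising in the paper's applications, where the system is $\Lambda /\Lambda _{\geq k}$ or a socle filtration), but for an arbitrary directed set it is the linear-compactness theorem for finite-dimensional vector spaces, proved by a Zorn-type argument; the Mittag--Leffler condition alone does not yield it over uncountable index sets. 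The paper's formal detour through $D(\varinjlim )\cong \varprojlim D$ is exactly what makes its proof index-set-agnostic without this extra input, at the price of hiding the concrete content your argument displays.
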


\begin{proof}
Since $E$ is finitely cogenerated it is locally finite. It was proved in $%
[12]$ that $D$ is a duality in the category of locally finite graded
modules. Applying the duality and adjunction we have natural isomorphisms:

Hom$_{\Bbbk }(Hom_{\Lambda }(\underleftarrow{\lim }M_{\alpha },E),\Bbbk
)\cong D(E)\otimes \underleftarrow{\lim }M_{\alpha }$.

Using the fact $D(E)$ is a finitely generated projective, we have
isomorphisms:

$D(E)\otimes \underleftarrow{\lim }M_{\alpha }\cong Hom_{\Lambda
}(D(E)^{\ast }$, $\underleftarrow{\lim }M_{\alpha })\cong \underleftarrow{%
\lim }Hom_{\Lambda }(D(E)^{\ast }$, $M_{\alpha })\cong $\linebreak $%
\underleftarrow{\lim }(D(E)\otimes M_{\alpha })\cong \underleftarrow{\lim }%
D(Hom_{\Lambda }(M_{\alpha },E)).$

It follows from $[7]$ that $\underleftarrow{\lim }D(Hom_{\Lambda }(M_{\alpha
},E))\cong D(\underrightarrow{\lim }Hom_{\Lambda }(M_{\alpha },E)).$

Dualizing again we obtain the isomorphism: Hom$_{\Lambda }$($\underleftarrow{%
\lim }$M$_{\alpha }$,E)$\cong \underrightarrow{\lim }$Hom$_{\Lambda }$(M$%
_{\alpha }$,E).
\end{proof}

As a corollary we have:

\begin{proposition}
Let $\Lambda $ be a graded AS Gorenstein algebra. With the same notation as
above there is an isomorphism of left (right) $\Lambda $-modules $%
I_{n}^{\prime }\cong J_{n}^{\prime }$.
\end{proposition}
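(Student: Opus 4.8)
The plan is to show that, exactly like $I_n'$, the module $J_n'$ corepresents the functor $Ext_{\Lambda}^{n}(-,\Lambda)$ on graded left $\Lambda$-modules of finite length, and then to recover each of $I_n'$ and $J_n'$ from this functor by a single colimit. Recall that for a finite length left module $M$ we already have the natural identification $Ext_{\Lambda}^{n}(M,\Lambda)\cong Hom_{\Lambda}(M,I_n')$. So it suffices to produce a natural isomorphism $Hom_{\Lambda}(M,J_n')\cong Ext_{\Lambda}^{n}(M,\Lambda)$ for every finite length left $M$; once this is available the comparison of $I_n'$ and $J_n'$ becomes formal. The argument for right modules is the mirror image, obtained by exchanging the roles of $\Lambda$ and $\Lambda^{op}$.

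For the colimit step I would use Lemma 2 together with the fact that, as a graded module, $\Lambda=\underleftarrow{\lim }_{t}\,\Lambda/\Lambda_{\geq t}$, the inverse limit being computed degreewise, where it stabilizes. Since each of $I_n'$ and $J_n'$ is a finite direct sum of indecomposable injectives it is finitely cogenerated, so Lemma 2 applies with $E=I_n'$ and with $E=J_n'$ and gives $I_n'\cong Hom_{\Lambda}(\Lambda,I_n')\cong\underrightarrow{\lim }_{t}\,Hom_{\Lambda}(\Lambda/\Lambda_{\geq t},I_n')$ and likewise $J_n'\cong\underrightarrow{\lim }_{t}\,Hom_{\Lambda}(\Lambda/\Lambda_{\geq t},J_n')$, where the left module structures come from the right multiplication on the truncations $\Lambda/\Lambda_{\geq t}$. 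A natural isomorphism of the finite length functors $Hom_{\Lambda}(-,J_n')\cong Ext_{\Lambda}^{n}(-,\Lambda)\cong Hom_{\Lambda}(-,I_n')$ is then compatible with these colimit presentations and yields $I_n'\cong J_n'$ as left modules.

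The main obstacle is the finite length identification $Hom_{\Lambda}(M,J_n')\cong Ext_{\Lambda}^{n}(M,\Lambda)$, since $J_n'$ arises from the \emph{right} injective resolution of $\Lambda$ and carries no manifest left injectivity. To establish it I would reduce to graded simples $M=S_j$ and use the structure already assembled: the multiplication isomorphism $\mu\colon e_k\Lambda\otimes_{\Lambda}J_n'\to e_kJ_n'$, the equality $e_kJ_n'=D(Q_{\sigma(k)})[-n_{\sigma(k)}]$, the bijection $\sigma$, and the reconstruction $Ext_{\Lambda^{op}}^{n}(Ext_{\Lambda}^{n}(M,\Lambda),\Lambda)\cong M$ from the Proposition above. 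These let me compute $Hom_{\Lambda}(S_j,J_n')$ and match it, through $\sigma$, with $Ext_{\Lambda}^{n}(S_j,\Lambda)\cong D(S_{\sigma(j)})[-n_j]$; in particular the left socle of $J_n'$ is identified with $\bigoplus_{j}D(S_j')[-n_j]$, the same as that of $I_n'$. Checking that this identification is natural in $M$, and not merely an abstract isomorphism on simples, so that it survives the passage to the colimit, is the delicate point; once it is in hand the classification of torsion graded injectives by their socle finishes the proof, and the right sided statement follows by the symmetric argument over $\Lambda^{op}$.
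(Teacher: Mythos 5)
Your argument is circular at the two points where the left module structure of $J_n'$ enters. Everything established before this proposition about $J_n'$ concerns its \emph{right} module structure: it is the torsion part of the $n$-th term of the injective resolution of $\Lambda$ as a right module, so it is a finitely cogenerated injective right module with $J_n'^{(k)}=D(Q_{\sigma(k)})[-n_{\sigma(k)}]$ as right modules. Its \emph{left} structure is only the bimodule structure obtained by lifting left multiplications to the right injective resolution; the liftings $\hat{\phi}_{\lambda}$ are unique but completely inexplicit. First, Lemma 2 with $E=J_n'$ requires $E$ to be a finitely cogenerated injective \emph{left} module (its proof uses that $D(E)$ is a finitely generated projective); your justification that ``each of $I_n'$ and $J_n'$ is a finite direct sum of indecomposable injectives'' is true for $J_n'$ only on the right, and left injectivity of $J_n'$ is not known at this stage --- indeed it is essentially equivalent to the proposition being proved. (The weaker fact that $J_n'$ is left torsion can be salvaged by degree reasons, since $J_n'$ is bounded above and the left action of $\Lambda_{\geq t}$ raises degrees by at least $t$, but that is not what Lemma 2 needs.) Second, your closing appeal to ``the classification of torsion graded injectives by their socle'' presupposes the same thing: knowing that the left socle of $J_n'$ agrees with that of $I_n'$ pins down nothing unless you already know $J_n'$ is an injective left module. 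The same obstruction blocks the step you yourself flag as delicate: computing $Hom_{\Lambda}(S_j,J_n')$ means computing the joint kernel of the maps $\hat{\phi}_{\lambda}$, $\lambda\in\Lambda_{\geq 1}$, and none of the ingredients you cite ($\mu$, the right-module identification of $e_kJ_n'$, $\sigma$, the double-Ext reconstruction) gives control of those liftings beyond their top-degree behaviour.

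The paper's proof is designed precisely to avoid ever mapping a left module into $J_n'$. It uses $Hom_{\Lambda^{op}}(-,J_n')$ on \emph{right} modules only, where injectivity is known, so Lemma 2 applies legitimately on the $\Lambda^{op}$ side. Concretely, it writes $I_n'=\underrightarrow{\lim}I_{\alpha}'$ with $I_{\alpha}'$ of finite length, uses the duality $Hom_{\Lambda^{op}}(Hom_{\Lambda}(I_{\alpha}',I_n'),J_n')\cong Ext_{\Lambda^{op}}^{n}(Ext_{\Lambda}^{n}(I_{\alpha}',\Lambda),\Lambda)\cong I_{\alpha}'$ together with the ring isomorphism $\Lambda\cong End_{\Lambda}(I_n')^{op}$, and then evaluates $Hom_{\Lambda^{op}}(Hom_{\Lambda}(I_n',I_n'),J_n')$ in two ways: via Lemma 2 it is $\underrightarrow{\lim}I_{\alpha}'\cong I_n'$, and via the ring isomorphism it is $Hom_{\Lambda^{op}}(\Lambda^{op},J_n')\cong J_n'$. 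This gives $I_n'\cong J_n'$ as right modules, and the left-module statement follows by passing to the opposite ring. If you want to keep your representability-plus-colimit strategy, you would first have to prove independently that $J_n'$ with its bimodule left structure is a finitely cogenerated injective (or at least torsion with essential socle equal to that of $I_n'$) left module --- and that is the real content of the proposition, not a formality.
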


\begin{proof}
Let $\varphi :\Lambda \rightarrow Hom_{\Lambda }(I_{n}^{\prime
},I_{n}^{\prime })^{op}$ be the ring isomorphism given above.

The bimodule structure of $I_{n}^{\prime }$ gives $Hom_{\Lambda
}(I_{n}^{\prime },I_{n}^{\prime })$ a structure of right module as follows:
given a map $f:I_{n}^{\prime }\rightarrow I_{n}^{\prime }$ and $\lambda $ in 
$\Lambda $, $f\lambda (x)=f(x)\lambda .$

Then $\varphi (\lambda _{1}\lambda _{2})=\varphi (\lambda _{1})\ast \varphi
(\lambda _{2})=\varphi (\lambda _{2})\varphi (\lambda _{1})$

$\varphi (\lambda _{1}\lambda _{2})(x)=x(\lambda _{1}\lambda _{2})=\varphi
(\lambda _{1})(x)\lambda _{2}=\varphi (\lambda _{1})\lambda _{2}(x)$.

Therefore: $\varphi (\lambda _{1}\lambda _{2})=\varphi (\lambda _{1})\lambda
_{2}$ and $\varphi $ is an isomorphism of right modules.

Since $I_{n}^{\prime }$ is finitely cogenerated $I_{n}^{\prime }=%
\underrightarrow{\lim }$ $I_{\alpha }^{\prime }$ with $I_{\alpha }^{\prime }$
of finite length and \linebreak $Hom_{\Lambda ^{op}}(Hom_{\Lambda
}(I_{\alpha }^{\prime },I_{n}^{\prime }),J_{n}^{\prime })\cong I_{\alpha
}^{\prime }.$

We have natural isomorphisms:

$Hom_{\Lambda ^{op}}(Hom_{\Lambda }(I_{n}^{\prime },I_{n}^{\prime
}),J_{n}^{\prime })\cong Hom_{\Lambda ^{op}}(Hom_{\Lambda }(\underrightarrow{%
\lim }I_{\alpha }^{\prime },I_{n}^{\prime }),J_{n}^{\prime })\cong $

$Hom_{\Lambda ^{op}}(\underleftarrow{\lim }Hom_{\Lambda }(I_{\alpha
}^{\prime },I_{n}^{\prime }),J_{n}^{\prime })\cong \underrightarrow{\lim }%
Hom_{\Lambda ^{op}}(Hom_{\Lambda }(I_{\alpha }^{\prime },I_{n}^{\prime
}),J_{n}^{\prime })\cong \underrightarrow{\lim }I_{\alpha }^{\prime }\cong
I_{n}^{\prime }.$

Also $Hom_{\Lambda ^{op}}(Hom_{\Lambda }(I_{n}^{\prime },I_{n}^{\prime
}),J_{n}^{\prime })\cong Hom_{\Lambda ^{op}}(\Lambda ^{op},J_{n}^{\prime
})\cong J_{n}^{\prime }$.

It follows $I_{n}^{\prime }$ is isomorphic to $J_{n}^{\prime }$ as right $%
\Lambda $-module.

Going to the opposite ring and interchanging the roles of $I_{n}^{\prime }$
and $J_{n}^{\prime }$we have that $J_{n}^{\prime }$ is isomorphic to $%
I_{n}^{\prime }$ as left $\Lambda $-module.
\end{proof}

\begin{corollary}
With the same conditions as in the proposition we get an isomorphisms of
left (right) $\Lambda $-modules $\overset{m}{\underset{k=1}{\oplus }}Q_{\tau
(k)}^{\ast })[n_{\tau (k)}]\cong \overset{m}{\underset{k=1}{\oplus }}%
Q_{\sigma (k)})[n_{\sigma (k)}].$
\end{corollary}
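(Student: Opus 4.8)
The plan is to deduce the corollary directly from the preceding proposition by applying the duality $D$ to the isomorphism $I_{n}^{\prime }\cong J_{n}^{\prime }$. First I would recall the explicit descriptions of the two torsion summands obtained earlier in this section: writing $\tau =\sigma ^{-1}$, we have on the left $I_{n}^{\prime }\cong \overset{m}{\underset{k=1}{\oplus }}D(Q_{\tau (k)}^{\ast })[-n_{\tau (k)}]$, and on the right $J_{n}^{\prime }\cong \overset{m}{\underset{k=1}{\oplus }}D(Q_{\sigma (k)})[-n_{\sigma (k)}]$. Both of these are $\Lambda $-$\Lambda $ bimodules, and both are finitely cogenerated, hence locally finite, so the duality $D$ between locally finite graded $\Lambda $- and $\Lambda ^{op}$-modules applies to them and sends each to a bimodule with the two sides interchanged.

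Next I would record the three formal properties of $D$ that carry the computation: $D$ is a duality, so $D(D(M))\cong M$; $D$ commutes with finite direct sums; and $D$ reverses shifts, $D(M[s])=D(M)[-s]$, which follows at once from $D(M)_{j}=Hom_{\Bbbk }(M_{-j},\Bbbk )$ together with $M[s]_{i}=M_{s+i}$. Applying these to the two descriptions gives, summand by summand, $D(D(Q_{\tau (k)}^{\ast })[-n_{\tau (k)}])\cong Q_{\tau (k)}^{\ast }[n_{\tau (k)}]$ and likewise on the right, so that $D(I_{n}^{\prime })\cong \overset{m}{\underset{k=1}{\oplus }}Q_{\tau (k)}^{\ast }[n_{\tau (k)}]$ and $D(J_{n}^{\prime })\cong \overset{m}{\underset{k=1}{\oplus }}Q_{\sigma (k)}[n_{\sigma (k)}]$, which are exactly the two sides of the asserted isomorphism.

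Finally, the proposition gives $I_{n}^{\prime }\cong J_{n}^{\prime }$ both as left and as right $\Lambda $-modules. Since $D$ interchanges left and right modules, applying it to the left-module isomorphism yields an isomorphism of right $\Lambda $-modules $D(I_{n}^{\prime })\cong D(J_{n}^{\prime })$, while applying it to the right-module isomorphism yields one of left $\Lambda $-modules. Combining either of these with the two computations of the previous paragraph produces $\overset{m}{\underset{k=1}{\oplus }}Q_{\tau (k)}^{\ast }[n_{\tau (k)}]\cong \overset{m}{\underset{k=1}{\oplus }}Q_{\sigma (k)}[n_{\sigma (k)}]$ as both left and right $\Lambda $-modules, which is the claim.

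I expect the only delicate point to be bookkeeping: keeping straight which object is a left and which a right module as $D$ swaps the two sides, and getting the sign of the shift right so that the double application of $D$ returns $Q^{\ast }[n]$ rather than $Q^{\ast }[-n]$. Both are routine once the convention $D(M[s])=D(M)[-s]$ is fixed, and there is no genuine conceptual obstacle here, since the preceding proposition $I_{n}^{\prime }\cong J_{n}^{\prime }$ already carries the entire analytic weight of the argument.
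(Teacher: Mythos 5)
Your proof is correct and takes essentially the same route as the paper: the paper's proof consists precisely of applying the duality $D$ to the isomorphism $I_{n}^{\prime}\cong J_{n}^{\prime}$ from the preceding proposition, using the earlier identifications $I_{n}^{\prime}\cong\underset{k=1}{\overset{m}{\oplus}}D(Q_{\tau(k)}^{\ast})[-n_{\tau(k)}]$ and $J_{n}^{\prime}\cong\underset{k=1}{\overset{m}{\oplus}}D(Q_{\sigma(k)})[-n_{\sigma(k)}]$. Your extra bookkeeping (that $D$ reverses shifts, commutes with finite sums, and interchanges left and right modules) merely makes explicit what the paper's one-line proof leaves implicit.
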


\begin{proof}
We just apply duality to the isomorphic modules $I_{n}^{\prime }$ and $%
J_{n}^{\prime }$ to get:

$\overset{m}{\underset{k=1}{\oplus }}Q_{\tau (k)}^{\ast })[n_{\tau
(k)}]\cong \overset{m}{\underset{k=1}{\oplus }}Q_{\sigma (k)})[n_{\sigma
(k)}].$
\end{proof}

\begin{lemma}
Let $\Lambda $ be a positively graded locally finite $\Bbbk $-algebra, $X$, $%
Y$ graded left $\Lambda $-modules with $Y$ locally finite. Then for any non
negative integer $n$, there is a natural isomorphism: $Ext_{\Lambda
}^{n}(X,Y)\cong D(Tor_{n}^{\Lambda }(D(Y),X)).$
\end{lemma}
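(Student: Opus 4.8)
The plan is to compute both sides of the asserted isomorphism from a single projective resolution of $X$ and to pass between them using the graded $\Bbbk$-dual $D$, which is exact and (on locally finite modules) squares to the identity. First I would fix a projective resolution $\cdots \rightarrow P_{1}\rightarrow P_{0}\rightarrow X\rightarrow 0$ of $X$ by finitely generated graded projective left $\Lambda $-modules, as is the standing setting in this paper; this choice guarantees that each $Hom_{\Lambda }(P_{i},Y)$ is locally finite, since $Y$ is. By definition $Ext_{\Lambda }^{n}(X,Y)=H^{n}\big(Hom_{\Lambda }(P_{\bullet },Y)\big)$, and since $Tor$ may be computed from a projective resolution of its second variable, $Tor_{n}^{\Lambda }(D(Y),X)=H_{n}\big(D(Y)\otimes _{\Lambda }P_{\bullet }\big)$.

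The key input is Lemma 1: applied to the finitely presented module $P_{i}$ and to $Y$, it furnishes a natural isomorphism $D(Hom_{\Lambda }(P_{i},Y))\cong D(Y)\otimes _{\Lambda }P_{i}$. Because this isomorphism is natural in $P_{i}$, the termwise isomorphisms commute with the differentials, so they assemble into an isomorphism of complexes: dualizing the cochain complex $Hom_{\Lambda }(P_{\bullet },Y)$ yields a chain complex $D(Hom_{\Lambda }(P_{\bullet },Y))$ that is isomorphic, degree by degree and compatibly with the boundary maps, to $D(Y)\otimes _{\Lambda }P_{\bullet }$. Next I would invoke that $D=Hom_{\Bbbk }(-,\Bbbk )$ is an exact contravariant functor on graded $\Bbbk $-vector spaces, so it interchanges cohomology and homology, $D\big(H^{n}(C^{\bullet })\big)\cong H_{n}\big(D(C^{\bullet })\big)$. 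Taking $C^{\bullet }=Hom_{\Lambda }(P_{\bullet },Y)$ and combining with the isomorphism of complexes gives
\[
D\big(Ext_{\Lambda }^{n}(X,Y)\big)\cong H_{n}\big(D(Hom_{\Lambda }(P_{\bullet },Y))\big)\cong H_{n}\big(D(Y)\otimes _{\Lambda }P_{\bullet }\big)=Tor_{n}^{\Lambda }(D(Y),X).
\]
Finally, since each $Hom_{\Lambda }(P_{i},Y)$ is locally finite, so is the subquotient $Ext_{\Lambda }^{n}(X,Y)$; applying $D$ once more and using that $D$ is a duality on locally finite graded modules (so $D^{2}\cong \mathrm{id}$, see [12]) turns the displayed isomorphism into $Ext_{\Lambda }^{n}(X,Y)\cong D\big(Tor_{n}^{\Lambda }(D(Y),X)\big)$.

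I expect the main obstacle to be the bookkeeping in the two middle steps rather than any deep point. One must verify that the natural isomorphisms of Lemma 1 genuinely commute with the coboundary maps, so that they glue to a morphism of complexes, and that the grading conventions remain consistent under the shift built into $Ext_{\Lambda }^{k}(M,N)_{n}$ and into $D(M)_{j}=Hom_{\Bbbk }(M_{-j},\Bbbk )$, so that homology is matched in the correct degree. A secondary technical point to isolate is the local finiteness needed for $D^{2}\cong \mathrm{id}$, which is exactly where the hypotheses that $Y$ is locally finite and the $P_{i}$ are finitely generated are consumed. I would also note a cleaner variant that dispenses with the finiteness of the resolution: one can instead dualize the $Tor$-complex and apply the unrestricted adjunction $Hom_{\Bbbk }(D(Y)\otimes _{\Lambda }P,\Bbbk )\cong Hom_{\Lambda }(P,D(D(Y)))$ together with $D(D(Y))\cong Y$, giving $D(Tor_{n}^{\Lambda }(D(Y),X))\cong H^{n}\big(Hom_{\Lambda }(P_{\bullet },Y)\big)=Ext_{\Lambda }^{n}(X,Y)$ for arbitrary $X$ and any projective resolution, at the cost of replacing the appeal to Lemma 1 by this elementary $\Bbbk $-adjunction.
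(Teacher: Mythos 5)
Your main argument does not prove the lemma as stated: it begins by choosing a projective resolution of $X$ by \emph{finitely generated} graded projectives, but the lemma assumes nothing about $X$ beyond its being a graded module, and the algebra here is only locally finite positively graded (not noetherian or coherent), so an arbitrary graded $X$ -- even a finitely generated one -- need not admit such a resolution. That finiteness hypothesis is consumed twice in your route: once so that Lemma 1 applies to the terms $P_{i}$ (giving $D(Hom_{\Lambda }(P_{i},Y))\cong D(Y)\otimes _{\Lambda }P_{i}$, which requires $P_{i}$ finitely presented), and once at the end, where you need $Ext_{\Lambda }^{n}(X,Y)$ to be locally finite in order to undo the dual via $D^{2}\cong \mathrm{id}$. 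Without it, your route only yields $D(Ext_{\Lambda }^{n}(X,Y))\cong Tor_{n}^{\Lambda }(D(Y),X)$, which outside the locally finite setting is genuinely weaker than, and not equivalent to, the asserted $Ext_{\Lambda }^{n}(X,Y)\cong D(Tor_{n}^{\Lambda }(D(Y),X))$. The loss of generality matters in this paper: Corollary 4 applies the lemma with $X=\underrightarrow{\lim }\,X_{\alpha }$ a direct limit, exactly the kind of module for which no finitely generated resolution is available.

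The ``cleaner variant'' you relegate to a closing remark is precisely the paper's proof, and it is the one that establishes the lemma in full generality: take an arbitrary projective resolution $Q_{\bullet }\rightarrow X$, form the complex $D(Y)\otimes _{\Lambda }Q_{\bullet }$ computing $Tor$, dualize it, and identify $D(D(Y)\otimes _{\Lambda }Q_{i})\cong Hom_{\Lambda }(Q_{i},D^{2}(Y))\cong Hom_{\Lambda }(Q_{i},Y)$ using the unrestricted tensor-hom adjunction over $\Bbbk $ together with $D^{2}(Y)\cong Y$ -- the sole point where local finiteness enters, and it is a hypothesis on $Y$, not on $X$. Since $D$ is exact it commutes with taking homology, so the $n$-th cohomology of the dualized complex is $D(Tor_{n}^{\Lambda }(D(Y),X))$ on one hand and $Ext_{\Lambda }^{n}(X,Y)$ on the other, with no double-dualization of $Ext$ or $Tor$ groups ever needed. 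So the fix is simple: promote your final remark to be the proof, and either discard the main route or keep it only as a statement valid under the additional hypothesis that $X$ admits a resolution by finitely generated projectives.
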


\begin{proof}
Consider a projective resolution of $X$:

$\rightarrow Q_{n+1}\rightarrow Q_{n}\rightarrow Q_{n-1}...\rightarrow
Q_{1}\rightarrow Q_{0}\longrightarrow X\rightarrow 0.$

Tensoring with $D(Y)$ we obtain a complex:

*) $\rightarrow D(Y)\otimes _{\Lambda }Q_{n+1}\rightarrow D(Y)\otimes
_{\Lambda }Q_{n}\rightarrow D(Y)\otimes _{\Lambda }Q_{n-1}...\rightarrow
D(Y)\otimes _{\Lambda }Q_{1}\rightarrow D(Y)\otimes _{\Lambda
}Q_{0}\longrightarrow 0.$

whose $n$-th homology is $Tor_{n}^{\Lambda }(D(Y),X)$.

Dualizing *) we obtain a complex:

**) $0\rightarrow D(D(Y)\otimes _{\Lambda }Q_{0})\rightarrow D(D(Y)\otimes
_{\Lambda }Q_{1})\rightarrow ...D(D(Y)\otimes _{\Lambda }Q_{n-1}\rightarrow
D(D(Y)\otimes _{\Lambda }Q_{n})\rightarrow D(D(Y)\otimes _{\Lambda
}Q_{n+1})\longrightarrow ...$

with $n$-th homology $D(Tor_{n}^{\Lambda }(D(Y),X)$ $)$.

Since we have natural isomorphisms: $D(D(Y)\otimes _{\Lambda }Q_{i})\cong
Hom_{\Lambda }(Q_{i}$,$D^{2}(Y))\cong Hom_{\Lambda }(Q_{i}$, $Y)$, the
complex **) is isomorphic to the complex:

$0\rightarrow Hom_{\Lambda }(Y,Q_{0})\rightarrow Hom_{\Lambda
}(Y,Q_{1})\rightarrow ...Hom_{\Lambda }(Y,Q_{n-1})\rightarrow Hom_{\Lambda
}(Y,Q_{n})\rightarrow Hom_{\Lambda }(Y,Q_{n+1})\longrightarrow ...$

whose $n$-th homology $Ext_{\Lambda }^{n}(X,Y)$.

It follows: $Ext_{\Lambda }^{n}(X,Y)\cong D(Tor_{n}^{\Lambda }(D(Y),X)).$
\end{proof}

\begin{corollary}
Let $\Lambda $ be a graded AS Gorenstein algebra of graded injective
dimension $n$. Then for any graded $\Lambda $-module $M$, there is a natural
isomorphism:

$Ext_{\Lambda }^{i}(M$,$\overset{m}{\underset{k=1}{\oplus }}Q_{\sigma
(k)})[n_{\sigma (k)}])\cong D(Tor_{i}^{\Lambda }(I_{n}^{\prime }$, $M))$.
\end{corollary}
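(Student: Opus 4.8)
The plan is to obtain the corollary as a direct application of the Tor--Ext duality Lemma proved just above, applied to the locally finite module $Y:=\overset{m}{\underset{k=1}{\oplus}}Q_{\sigma(k)}[n_{\sigma(k)}]$, after identifying its $\Bbbk$-dual $D(Y)$ with the bimodule $I_{n}^{\prime}$. First I would check the hypotheses of that Lemma: each $Q_{\sigma(k)}$ is an indecomposable projective over the locally finite positively graded algebra $\Lambda$, hence locally finite, and a finite direct sum of shifts of locally finite modules is again locally finite, so $Y$ is a locally finite graded left $\Lambda$-module. The Lemma (with $X=M$ and $n=i$) then supplies, for every $i$, a natural isomorphism $Ext_{\Lambda}^{i}(M,Y)\cong D(Tor_{i}^{\Lambda}(D(Y),M))$, natural in $M$; note that no finiteness on $M$ is required, matching the statement for an arbitrary graded $M$.

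The heart of the argument is the computation of $D(Y)$. Using that $D$ commutes with finite direct sums and that $D(N[a])=D(N)[-a]$ for the shift convention $N[a]_{i}=N_{a+i}$ and $D(N)_{j}=Hom_{\Bbbk}(N_{-j},\Bbbk)$, I get $D(Y)=\overset{m}{\underset{k=1}{\oplus}}D(Q_{\sigma(k)})[-n_{\sigma(k)}]$, a graded right $\Lambda$-module. But this is precisely the torsion summand $J_{n}^{\prime}=\overset{m}{\underset{k=1}{\oplus}}J_{n}^{\prime(k)}$ of the $n$-th term of the right injective resolution of $\Lambda$, since it was shown above that $J_{n}^{\prime(k)}=D(Q_{\sigma(k)})[-n_{\sigma(k)}]$. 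Hence $D(Y)\cong J_{n}^{\prime}$ as right $\Lambda$-modules, and by the Proposition establishing $I_{n}^{\prime}\cong J_{n}^{\prime}$ we obtain $D(Y)\cong I_{n}^{\prime}$ as right $\Lambda$-modules, with exactly the right-module (bimodule) structure that makes $Tor_{i}^{\Lambda}(I_{n}^{\prime},M)$ meaningful.

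Finally I would substitute this identification back into the isomorphism from the Lemma. Since $D(Y)\cong I_{n}^{\prime}$ does not involve $M$, it induces $Tor_{i}^{\Lambda}(D(Y),M)\cong Tor_{i}^{\Lambda}(I_{n}^{\prime},M)$ naturally in $M$; applying $D$ and composing with the Lemma's isomorphism yields the claimed natural isomorphism $Ext_{\Lambda}^{i}(M,\overset{m}{\underset{k=1}{\oplus}}Q_{\sigma(k)}[n_{\sigma(k)}])\cong D(Tor_{i}^{\Lambda}(I_{n}^{\prime},M))$.

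The step I expect to be the only delicate one is the left/right bookkeeping together with the shift signs in computing $D(Y)$: one must verify that $D(Y)$ lands on the \emph{right-sided} object $J_{n}^{\prime}$ (built from the right injective resolution) rather than on $I_{n}^{\prime}$ directly, and then pass through the earlier isomorphism $I_{n}^{\prime}\cong J_{n}^{\prime}$. Keeping the shift $[n_{\sigma(k)}]$ in $Y$ consistent with the shift $[-n_{\sigma(k)}]$ appearing in $J_{n}^{\prime(k)}$ under $D$ is the one genuine computation; everything else is a formal concatenation of the already-established results.
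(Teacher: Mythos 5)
Your proposal is correct and matches the paper's intended route: the paper states this corollary without a written proof, as an immediate consequence of the preceding Lemma $Ext_{\Lambda }^{i}(X,Y)\cong D(Tor_{i}^{\Lambda }(D(Y),X))$, and your argument---taking $Y=\oplus _{k=1}^{m}Q_{\sigma (k)}[n_{\sigma (k)}]$, computing $D(Y)=\oplus _{k=1}^{m}D(Q_{\sigma (k)})[-n_{\sigma (k)}]=J_{n}^{\prime }$, and then passing through the earlier Proposition $I_{n}^{\prime }\cong J_{n}^{\prime }$ as right $\Lambda $-modules (the bimodule structure making $Tor_{i}^{\Lambda }(I_{n}^{\prime },M)$ meaningful)---is exactly the implicit chain of identifications, including the correct handling of the shift signs and the left/right bookkeeping.
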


\begin{corollary}
Let $\Lambda $ be a positively graded locally finite $\Bbbk $- algebra, $\{$ 
$X_{\alpha },j_{\alpha }\}$ be a graded direct system of $\Lambda $-modules
and $Y$ a locally finite $\Lambda $-module. Then for any non negative
integer $i$, there is a natural isomorphism: $Ext_{\Lambda }^{i}(%
\underrightarrow{\lim }$ $X_{\alpha },Y)\cong \underleftarrow{\lim }%
Ext_{\Lambda }^{i}(X_{\alpha },Y)$.
\end{corollary}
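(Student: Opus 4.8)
The plan is to reduce the statement to the behaviour of $Tor$ and of the duality $D$ under direct limits, using the preceding Lemma that identifies $Ext_{\Lambda }^{i}(X,Y)\cong D(Tor_{i}^{\Lambda }(D(Y),X))$ for $Y$ locally finite. First I would apply that Lemma with $X=\underrightarrow{\lim }X_{\alpha }$; this is legitimate since $Y$ is locally finite and the Lemma is stated for an arbitrary first argument, so that
\[
Ext_{\Lambda }^{i}(\underrightarrow{\lim }X_{\alpha },Y)\cong D(Tor_{i}^{\Lambda }(D(Y),\underrightarrow{\lim }X_{\alpha })).
\]

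Next I would use the standard fact that $Tor$ commutes with direct limits in each variable, so that $Tor_{i}^{\Lambda }(D(Y),\underrightarrow{\lim }X_{\alpha })\cong \underrightarrow{\lim }Tor_{i}^{\Lambda }(D(Y),X_{\alpha })$; this holds because the tensor product commutes with direct limits and homology commutes with filtered colimits. Substituting yields
\[
Ext_{\Lambda }^{i}(\underrightarrow{\lim }X_{\alpha },Y)\cong D(\underrightarrow{\lim }Tor_{i}^{\Lambda }(D(Y),X_{\alpha })).
\]
Then I would invoke the property of the duality $D$ already used in the proof of the first Lemma of this section (following $[7]$): since $D=Hom_{\Bbbk }(-,\Bbbk )$ is computed degreewise as a $\Bbbk $-vector space dual, it is contravariant and carries direct limits to inverse limits, giving $D(\underrightarrow{\lim }Tor_{i}^{\Lambda }(D(Y),X_{\alpha }))\cong \underleftarrow{\lim }D(Tor_{i}^{\Lambda }(D(Y),X_{\alpha }))$. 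Finally, applying the preceding Lemma once more, now to each $X_{\alpha }$, we have $D(Tor_{i}^{\Lambda }(D(Y),X_{\alpha }))\cong Ext_{\Lambda }^{i}(X_{\alpha },Y)$, and assembling the chain produces the desired isomorphism $Ext_{\Lambda }^{i}(\underrightarrow{\lim }X_{\alpha },Y)\cong \underleftarrow{\lim }Ext_{\Lambda }^{i}(X_{\alpha },Y)$.

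The main obstacle is naturality: each of the three isomorphisms above must be compatible with the transition maps $j_{\alpha }$ of the direct system, so that they descend to the (co)limits and yield a genuinely natural isomorphism rather than merely an abstract identification at each index. This is ensured by the naturality in the first variable of the isomorphism $Ext_{\Lambda }^{i}(-,Y)\cong D(Tor_{i}^{\Lambda }(D(Y),-))$ furnished by the preceding Lemma, together with the functoriality of the limit isomorphisms for $Tor$ and for $D$; concretely, I would verify that the relevant squares indexed by the maps $j_{\alpha }$ commute before passing to the limit, after which the conclusion is immediate.
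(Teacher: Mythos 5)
Your proposal is correct and follows essentially the same route as the paper: the paper's proof is exactly the chain $Ext_{\Lambda }^{i}(\underrightarrow{\lim }X_{\alpha },Y)\cong D(Tor_{i}^{\Lambda }(D(Y),\underrightarrow{\lim }X_{\alpha }))\cong D(\underrightarrow{\lim }Tor_{i}^{\Lambda }(D(Y),X_{\alpha }))\cong \underleftarrow{\lim }D(Tor_{i}^{\Lambda }(D(Y),X_{\alpha }))\cong \underleftarrow{\lim }Ext_{\Lambda }^{i}(X_{\alpha },Y)$, using the preceding Lemma, the commutation of $Tor$ with direct limits, and the fact that $D$ carries direct limits to inverse limits. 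Your additional attention to naturality of the isomorphisms with respect to the transition maps $j_{\alpha }$ is a sound refinement that the paper leaves implicit.
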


\begin{proof}
From the isomorphism: $Tor_{i}^{\Lambda }(D(Y),\underrightarrow{\lim }%
X_{\alpha })\cong \underrightarrow{\lim }Tor_{i}^{\Lambda }(D(Y),X_{\alpha
}) $, we obtain a chain of isomorphisms $Ext_{\Lambda }^{i}(\underrightarrow{%
\lim }$ $X_{\alpha },Y)\cong D(Tor_{i}^{\Lambda }(D(Y),\underrightarrow{\lim 
}$ $X_{\alpha })\cong D(\underrightarrow{\lim }Tor_{i}^{\Lambda
}(D(Y),X_{\alpha }))\cong \underleftarrow{\lim }D(Tor_{i}^{\Lambda
}(D(Y),X_{\alpha }))\cong $ $\underleftarrow{\lim }Ext_{\Lambda
}^{i}(X_{\alpha },Y)$.
\end{proof}

As an application of these observations we have the following:

Let $\Lambda $ be a graded Gorenstein algebra, such that all graded simple
have projective resolutions consisting of finitely generated projective, $m$
be the graded radical of $\Lambda $ and $k$ a positive integer. Then there
are isomorphisms:

$Ext_{\Lambda }^{n}(\Lambda /\Lambda _{\geq k},\Lambda )\cong Hom_{\Lambda
}(\Lambda /\Lambda _{\geq k},I_{n}^{\prime })$ and $D(Hom_{\Lambda }(\Lambda
/\Lambda _{\geq k},I_{n}^{\prime }))\cong D(I_{n}^{\prime })\otimes \Lambda
/\Lambda _{\geq k}$.

Dualizing again and using adjunction:

$Hom_{\Lambda }(\Lambda /\Lambda _{\geq k},I_{n}^{\prime })\cong
Hom_{\Lambda ^{op}}(D(I_{n}^{\prime }),D(\Lambda /\Lambda _{\geq k})).$

Using the fact $D(I_{n}^{\prime })$ is a finitely generated projective, $%
Hom_{\Lambda }(\Lambda /\Lambda _{\geq k},I_{n}^{\prime })\cong D(\Lambda
/\Lambda _{\geq k})\otimes D(I_{n}^{\prime })^{\ast }$.

It follows: $\underrightarrow{\lim }Ext_{\Lambda }^{n}(\Lambda /\Lambda
_{\geq k},\Lambda )\cong \underrightarrow{\lim }(D(\Lambda /\Lambda _{\geq
k})\otimes D(I_{n}^{\prime })^{\ast })\cong (\underrightarrow{\lim }%
(D(\Lambda /\Lambda _{\geq k}))\otimes D(I_{n}^{\prime })^{\ast }\cong D(%
\underleftarrow{\lim }\Lambda /\Lambda _{\geq k})\otimes D(I_{n}^{\prime
})^{\ast }\cong D(\Lambda )\otimes D(I_{n}^{\prime })^{\ast }\cong
Hom_{\Lambda ^{op}}(D(I_{n}^{\prime }),D(\Lambda ))\cong Hom_{\Lambda
}(\Lambda ,I_{n}^{\prime })\cong I_{n}^{\prime }.$

Observe that the functor $\underrightarrow{\lim }Hom_{\Lambda }(\Lambda
/\Lambda _{\geq k},-)$ is left exact and it has left derived functors $%
\underrightarrow{\lim }Ext_{\Lambda }^{i}(\Lambda /\Lambda _{\geq k},-)$.

\begin{definition}
Let $\Lambda $ be a positively graded locally finite $\Bbbk $-algebra and $m$
the graded Jacobson radical. The $i$-th local cohomology of the module $M$
is $\underrightarrow{\lim }Ext_{\Lambda }^{i}(\Lambda /\Lambda _{\geq k},M)$.
\end{definition}

We write $\Gamma _{m}(M)=\underrightarrow{\lim }Hom_{\Lambda }(\Lambda
/\Lambda _{\geq k},M)$ and $\Gamma _{m}^{i}(M)=\underrightarrow{\lim }%
Ext_{\Lambda }^{i}(\Lambda /\Lambda _{\geq k},M)$. We have proved above that
for a graded AS Gorenstein algebra of graded injective dimension $n$, , such
that all graded simple have projective resolutions consisting of finitely
generated projective, $\Gamma _{m}^{n}(\Lambda )=I_{n}^{\prime }$.

The results of Proposition 4, can be improved using the following:[9]

\begin{lemma}
$\Lambda $ , $\Gamma $ be a positively graded $\Bbbk $-algebras such that
the graded simple have projective resolutions consisting of finitely
generated projective modules, $m_{\Lambda }$, $m_{\Gamma }$, the graded
Jacobson radicals of $\Lambda $ and $\Gamma $, respectively and $m_{\Lambda
\otimes _{\Bbbk }\Gamma }$ the graded Jacobson radical of $\Lambda \otimes
_{\Bbbk }\Gamma $. Then $\Gamma _{m_{\Lambda \otimes _{\Bbbk }\Gamma }}$=$%
\Gamma _{m_{\Gamma }}\circ \Gamma _{m_{\Lambda }}$=$\Gamma _{m_{\Lambda
}}\circ \Gamma _{m_{\Gamma }}$
\end{lemma}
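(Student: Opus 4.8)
The plan is to reduce the statement to an elementary identification of torsion submodules. First I would record the concrete description of the functor $\Gamma _{m}$: for a graded $\Lambda $-module $M$, the direct limit $\Gamma _{m}(M)=\underrightarrow{\lim }\,Hom_{\Lambda }(\Lambda /\Lambda _{\geq k},M)$ can be identified, by sending a homomorphism to the image of the generator $1+\Lambda _{\geq k}$, with the $m$-torsion submodule $\{x\in M:\Lambda _{\geq k}x=0\text{ for some }k\}$. Thus $\Gamma _{m_{\Lambda }}$, $\Gamma _{m_{\Gamma }}$ and $\Gamma _{m_{\Lambda \otimes _{\Bbbk }\Gamma }}$ are each realized as ``pass to the torsion submodule'', and the whole lemma becomes an equality of submodules of a given $\Lambda \otimes _{\Bbbk }\Gamma $-module $M$; naturality in $M$ is then automatic, since every module map restricts to the corresponding torsion submodules.

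Next I would fix the relation between the three radicals. Since $(\Lambda \otimes _{\Bbbk }\Gamma )_{d}=\bigoplus _{i+j=d}\Lambda _{i}\otimes _{\Bbbk }\Gamma _{j}$, the graded radical is $m_{\Lambda \otimes _{\Bbbk }\Gamma }=(\Lambda \otimes _{\Bbbk }\Gamma )_{\geq 1}=(m_{\Lambda }\otimes _{\Bbbk }\Gamma )+(\Lambda \otimes _{\Bbbk }m_{\Gamma })$. A $\Lambda \otimes _{\Bbbk }\Gamma $-module $M$ is in particular a $\Lambda $-module through $\lambda \mapsto \lambda \otimes 1$ and a $\Gamma $-module through $\gamma \mapsto 1\otimes \gamma $, and because $\lambda \otimes 1$ and $1\otimes \gamma $ commute, $\Gamma _{m_{\Lambda }}(M)$ is stable under the $\Gamma $-action, hence is again a $\Lambda \otimes _{\Bbbk }\Gamma $-submodule; this is what makes the composites $\Gamma _{m_{\Gamma }}\circ \Gamma _{m_{\Lambda }}$ and $\Gamma _{m_{\Lambda }}\circ \Gamma _{m_{\Gamma }}$ meaningful endofunctors on $\Lambda \otimes _{\Bbbk }\Gamma $-modules. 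I would also remark here that $\Lambda \otimes _{\Bbbk }\Gamma $ is again locally finite positively graded, so the framework of the earlier sections applies.

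The core computation is the equality of submodules $\Gamma _{m_{\Lambda \otimes _{\Bbbk }\Gamma }}(M)=\Gamma _{m_{\Lambda }}(M)\cap \Gamma _{m_{\Gamma }}(M)$. For the inclusion $\subseteq $, if $(\Lambda \otimes _{\Bbbk }\Gamma )_{\geq N}x=0$ then a fortiori $(\Lambda _{\geq N}\otimes 1)x=0$ and $(1\otimes \Gamma _{\geq N})x=0$, so $x$ lies in both torsion submodules. For $\supseteq $, suppose $(\Lambda _{\geq k}\otimes 1)x=0$ and $(1\otimes \Gamma _{\geq l})x=0$; for homogeneous $\mu \otimes \nu $ with $\mu \in \Lambda _{i}$, $\nu \in \Gamma _{j}$ and $i+j\geq k+l-1$ one has $i\geq k$ or $j\geq l$, and writing $\mu \otimes \nu =(\mu \otimes 1)(1\otimes \nu )=(1\otimes \nu )(\mu \otimes 1)$ and using whichever factor annihilates $x$ gives $(\mu \otimes \nu )x=0$, whence $(\Lambda \otimes _{\Bbbk }\Gamma )_{\geq k+l-1}x=0$. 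Finally, composing torsion functors produces the intersection of the corresponding torsion submodules, i.e. $\Gamma _{m_{\Gamma }}(\Gamma _{m_{\Lambda }}(M))=\Gamma _{m_{\Lambda }}(M)\cap \Gamma _{m_{\Gamma }}(M)=\Gamma _{m_{\Lambda }}(\Gamma _{m_{\Gamma }}(M))$, and the lemma follows.

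I expect the only genuinely delicate point to be the passage between $\Lambda $-torsion and $(\Lambda \otimes _{\Bbbk }\Gamma )$-torsion, namely the observation that being killed by $\Lambda _{\geq k}\otimes 1$ is equivalent to being killed by the full ideal $\Lambda _{\geq k}\otimes _{\Bbbk }\Gamma $ (via $\mu \otimes \gamma =(1\otimes \gamma )(\mu \otimes 1)$), together with the degree bookkeeping $i+j\geq k+l-1\Rightarrow i\geq k\text{ or }j\geq l$ that converts separate annihilation by the two radicals into annihilation by a single truncation of $m_{\Lambda \otimes _{\Bbbk }\Gamma }$. Everything else, the limit description of $\Gamma _{m}$ and the naturality, is formal.
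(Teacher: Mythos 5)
Your proof is correct, but it follows a genuinely different route from the paper's. The paper never identifies $\Gamma _{m}$ with a torsion subfunctor; it manipulates the defining direct limits of Hom functors, in three steps: (a) the interleaving of ideals $(\Lambda \otimes _{\Bbbk }\Gamma )_{\geq k}\supseteq \Lambda _{\geq k}\otimes \Gamma +\Lambda \otimes \Gamma _{\geq k}\supseteq (\Lambda \otimes _{\Bbbk }\Gamma )_{\geq 2k}$, which makes the inverse systems $\{\Lambda \otimes \Gamma /(\Lambda \otimes \Gamma )_{\geq k}\}$ and $\{\Lambda /\Lambda _{\geq k}\otimes \Gamma /\Gamma _{\geq k}\}$ cofinal, hence yielding the same limit of Homs; (b) the adjunction $Hom_{\Lambda \otimes \Gamma }(\Lambda /\Lambda _{\geq k}\otimes \Gamma /\Gamma _{\geq k},-)\cong Hom_{\Gamma }(\Gamma /\Gamma _{\geq k},Hom_{\Lambda }(\Lambda /\Lambda _{\geq k},-))$; (c) replacement of the resulting diagonal limit by a double limit and the interchange $\underrightarrow{\lim }_{k}\underrightarrow{\lim }_{\ell }Hom_{\Gamma }(\Gamma /\Gamma _{\geq k},H_{\ell })=\underrightarrow{\lim }_{k}Hom_{\Gamma }(\Gamma /\Gamma _{\geq k},\underrightarrow{\lim }_{\ell }H_{\ell })$ with $H_{\ell }=Hom_{\Lambda }(\Lambda /\Lambda _{\geq \ell },-)$, and this interchange is the one step where the standing hypothesis enters: the finite-length modules $\Gamma /\Gamma _{\geq k}$ are finitely presented (they admit resolutions by finitely generated projectives), so Hom out of them commutes with filtered colimits. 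Your argument replaces (b) and (c) by the observation that a composite of torsion functors is the intersection of torsion submodules, and your degree bookkeeping ($i+j\geq k+\ell -1$ forces $i\geq k$ or $j\geq \ell $) is exactly the interleaving (a) in elementwise form. What your route buys: it is more elementary, avoids the limit interchange entirely, and in fact never uses the hypothesis on projective resolutions --- local finiteness alone makes $\Lambda /\Lambda _{\geq k}$ finitely generated, which is all that is needed to identify $\Gamma _{m}(M)$ with $\{x\in M:\Lambda _{\geq k}x=0\text{ for some }k\}$ --- so your proof shows the lemma holds for arbitrary locally finite positively graded algebras. What the paper's route buys: it stays inside the Hom-limit formalism used throughout the rest of the paper (the same juggling of $\underrightarrow{\lim }$ and Hom reappears in the proof that $\Gamma _{m}^{k}(\Lambda )=\Gamma _{m^{op}}^{k}(\Lambda )$, where these functors are applied to a complex of injective bimodules), so its identities come out in exactly the form in which they are later applied.
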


\begin{proof}
From inequalities: $(\Lambda \otimes _{\Bbbk }\Gamma )_{\geq k}\supseteq
\Lambda _{0}\otimes _{\Bbbk }\Gamma _{\geq k}$+$\Lambda _{\geq k}\otimes
_{\Bbbk }\Gamma _{0}\supseteq (\Lambda \otimes _{\Bbbk }\Gamma )_{\geq 2k}$
, it follows:\newline
$\underrightarrow{\lim }Hom_{\Lambda \otimes \Gamma }(\Lambda \otimes \Gamma
/((\Lambda \otimes \Gamma )_{\geq k},-)$=$\underrightarrow{\lim }%
Hom_{\Lambda \otimes \Gamma }(\Lambda \otimes \Gamma /(\Lambda _{0}\otimes
_{\Bbbk }\Gamma _{\geq k}$+$\Lambda _{\geq k}\otimes _{\Bbbk }\Gamma
_{0}),-) $\newline
$=\underrightarrow{\lim }Hom_{\Lambda \otimes \Gamma }(\Lambda /\Lambda
_{\geq k}\otimes \Gamma /\Gamma _{\geq k}$,-)=$\underrightarrow{\lim }%
Hom_{\Gamma }(\Gamma /\Gamma _{\geq k},$ $Hom_{\Lambda }(\Lambda /\Lambda
_{\geq k},-))=$\newline
$\underrightarrow{\lim }\underrightarrow{\lim }Hom_{\Gamma }(\Gamma /\Gamma
_{\geq k},$ $Hom_{\Lambda }(\Lambda /\Lambda _{\geq k},-))$=$%
\underrightarrow{\lim }Hom_{\Gamma }(\Gamma /\Gamma _{\geq k},$ $%
\underrightarrow{\lim }Hom_{\Lambda }(\Lambda /\Lambda _{\geq k},-))$\newline
$=\Gamma _{m_{\Gamma }}\circ \Gamma _{m_{\Lambda }}$.

Also $\underrightarrow{\lim }Hom_{\Lambda \otimes \Gamma }(\Lambda /\Lambda
_{\geq k}\otimes \Gamma /\Gamma _{\geq k}$,-)=$\underrightarrow{\lim }%
Hom_{\Lambda }(\Lambda /\Lambda _{\geq k}$,$Hom_{\Gamma }(\Gamma /\Gamma
_{\geq k},-))$=\newline
$\underrightarrow{\lim }\underrightarrow{\lim }Hom_{\Lambda }(\Lambda
/\Lambda _{\geq k}$, $Hom_{\Gamma }(\Gamma /\Gamma _{\geq k},-))$=$%
\underrightarrow{\lim }Hom_{\Lambda }(\Lambda /\Lambda _{\geq k}$, $%
\underrightarrow{\lim }Hom_{\Gamma }(\Gamma /\Gamma _{\geq k},$-))\linebreak
=$\Gamma _{m\Lambda }\circ \Gamma _{m\Gamma }$.
\end{proof}

The following result can be found in [9], for completeness we include it
here.

\begin{proposition}
Let $\Lambda $ be a positively graded $\Bbbk $-algebras such that the graded
simple have projective resolutions consisting of finitely generated
projective, $m$ the graded radical of $\Lambda $ and $m^{op}$ the graded
radical of $\Lambda ^{op}$. Then for any integer $k$, $\Gamma
_{m}^{k}(\Lambda )=\Gamma _{m^{op}}^{k}(\Lambda ).$
\end{proposition}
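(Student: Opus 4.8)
The plan is to unwind both sides through the $Ext$--$Tor$ duality of Lemma 4 and then exploit that $\Lambda _{\geq j}$ is a \emph{two-sided} ideal. First I would write
\[
\Gamma _{m}^{k}(\Lambda )=\varinjlim_{j}Ext_{\Lambda }^{k}(\Lambda /\Lambda _{\geq j},\Lambda ),\qquad \Gamma _{m^{op}}^{k}(\Lambda )=\varinjlim_{j}Ext_{\Lambda ^{op}}^{k}(\Lambda /\Lambda _{\geq j},\Lambda ),
\]
where each $\Lambda /\Lambda _{\geq j}$ is a finite-dimensional $\Lambda $-$\Lambda $ bimodule and has, by hypothesis, a resolution by finitely generated projectives on either side. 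Applying Lemma 4 to each factor gives $Ext_{\Lambda }^{k}(\Lambda /\Lambda _{\geq j},\Lambda )\cong D(Tor_{k}^{\Lambda }(D(\Lambda ),\Lambda /\Lambda _{\geq j}))$ and, over $\Lambda ^{op}$, $Ext_{\Lambda ^{op}}^{k}(\Lambda /\Lambda _{\geq j},\Lambda )\cong D(Tor_{k}^{\Lambda ^{op}}(D(\Lambda ),\Lambda /\Lambda _{\geq j}))$. Since $D$ interchanges $\varinjlim $ and $\varprojlim $ on locally finite modules, this reduces the proposition to the assertion that $\varprojlim_{j}Tor_{k}^{\Lambda }(D(\Lambda ),\Lambda /\Lambda _{\geq j})$ and $\varprojlim_{j}Tor_{k}^{\Lambda ^{op}}(D(\Lambda ),\Lambda /\Lambda _{\geq j})$ coincide.

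Next I would invoke the balancing of $Tor$: relabelling $\Lambda ^{op}$-modules as $\Lambda $-modules identifies $Tor_{k}^{\Lambda ^{op}}(D(\Lambda ),\Lambda /\Lambda _{\geq j})$ with $Tor_{k}^{\Lambda }(\Lambda /\Lambda _{\geq j},D(\Lambda ))$, in which $\Lambda /\Lambda _{\geq j}$ enters through its right structure and $D(\Lambda )$ through its left structure. Thus everything comes down to comparing the two groups $Tor_{k}^{\Lambda }(D(\Lambda ),\Lambda /\Lambda _{\geq j})$ and $Tor_{k}^{\Lambda }(\Lambda /\Lambda _{\geq j},D(\Lambda ))$, i.e. the two cyclic orders of the same pair of bimodules. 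To make these match I would rewrite $D(\Lambda )=\varinjlim_{i}D(\Lambda /\Lambda _{\geq i})$, using that the graded dual is the union of the duals of the finite truncations, use that $Tor$ commutes with direct limits, and then work entirely with the finite-dimensional bimodules $\Lambda /\Lambda _{\geq i}$, for which $D$ is involutive and Lemma 4 applies in both variables. This should rewrite each cyclic order as the $\Bbbk $-dual of an $Ext$ group of finite-dimensional bimodules, at which point the comparison becomes a statement about finite-dimensional modules that can be checked directly.

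The hard part will be precisely this cyclic symmetry: for two arbitrary bimodules $M,N$ there is in general no isomorphism $M\otimes _{\Lambda }N\cong N\otimes _{\Lambda }M$, so the equality cannot follow from formal $Tor$-symmetry alone. It genuinely uses that one factor is the dualizing bimodule $D(\Lambda )$ and that the other is the \emph{finite-length} two-sided quotient $\Lambda /\Lambda _{\geq j}$, together with $D^{2}\cong \mathrm{id}$ on locally finite modules; a small computation already shows that at each fixed $j$ the graded dimensions of $Ext_{\Lambda }^{k}(\Lambda /\Lambda _{\geq j},\Lambda )$ and $Ext_{\Lambda ^{op}}^{k}(\Lambda /\Lambda _{\geq j},\Lambda )$ agree. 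The step I expect to require the most care is promoting these term-wise identifications to \emph{natural} ones, so that they are compatible with the transition maps of the two directed systems; only then may one interchange the limits with $D$ and conclude $\Gamma _{m}^{k}(\Lambda )\cong \Gamma _{m^{op}}^{k}(\Lambda )$. For this I would lean on the earlier Corollary, that $Ext_{\Lambda }^{i}(\varinjlim X_{\alpha },Y)\cong \varprojlim Ext_{\Lambda }^{i}(X_{\alpha },Y)$, and on Lemma 2 relating $Hom$ out of an inverse limit to a direct limit of $Hom$'s, to control the passage to the limit on both sides simultaneously.
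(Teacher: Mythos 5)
Your opening reduction is correct: by the paper's Ext--Tor duality (Lemma 3, which you cite as Lemma 4) and the balancing of $Tor$, the statement is equivalent to identifying $\varinjlim_{j}D(Tor_{k}^{\Lambda }(D(\Lambda ),\Lambda /\Lambda _{\geq j}))$ with $\varinjlim_{j}D(Tor_{k}^{\Lambda }(\Lambda /\Lambda _{\geq j},D(\Lambda )))$, so everything hinges on the cyclic symmetry you isolate. But your plan for proving that symmetry is circular: writing $D(\Lambda )=\varinjlim_{i}D(\Lambda /\Lambda _{\geq i})$ and applying the same duality to the truncations turns $Tor_{k}^{\Lambda }(D(\Lambda /\Lambda _{\geq i}),\Lambda /\Lambda _{\geq j})$ into $D(Ext_{\Lambda }^{k}(\Lambda /\Lambda _{\geq j},\Lambda /\Lambda _{\geq i}))$ and the opposite cyclic order into $D(Ext_{\Lambda ^{op}}^{k}(\Lambda /\Lambda _{\geq j},\Lambda /\Lambda _{\geq i}))$ --- exactly the left-versus-right comparison you set out to prove, with $\Lambda $ replaced by its truncations, so nothing at the finite level ``can be checked directly''. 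Indeed the term-wise claim you offer as evidence is false. Let $\Lambda =\Bbbk \langle x,y\rangle /(xy,y^{2})$, with $\Bbbk $-basis $\{x^{n},yx^{n}\}_{n\geq 0}$; it satisfies the standing hypothesis, since $\Lambda _{\geq 1}=\Lambda x\oplus \Lambda y$ with $\Lambda x\cong \Lambda \lbrack -1]$ and $\Lambda y\cong \Bbbk \lbrack -1]$, giving a minimal left resolution of the simple with terms $\Lambda ,\Lambda \lbrack -1]^{2},\Lambda \lbrack -2]^{2},\dots $, and symmetrically $x\Lambda \cong y\Lambda \cong (\Lambda /y\Lambda )[-1]$ gives finitely generated resolutions on the right. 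For $j=1$, $k=0$ one has $Hom_{\Lambda }(\Lambda /\Lambda _{\geq 1},\Lambda )=\{a\in \Lambda :\Lambda _{\geq 1}a=0\}\supseteq \Bbbk y\neq 0$, while $Hom_{\Lambda ^{op}}(\Lambda /\Lambda _{\geq 1},\Lambda )=\{a\in \Lambda :a\Lambda _{\geq 1}=0\}=0$. The graded dimensions disagree already at a fixed finite stage, so no bookkeeping with Lemma 2 and the limit corollary can manufacture term-wise isomorphisms that do not exist. (In this example the two limits themselves differ, $\Gamma _{m}^{0}(\Lambda )\neq 0=\Gamma _{m^{op}}^{0}(\Lambda )$, which shows the symmetry is not a formal consequence of the finiteness hypotheses your manipulations use: some genuinely two-sided input is indispensable.)

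That input is what the paper's proof supplies, by a route disjoint from yours and with no term-wise comparison anywhere. It takes an injective resolution $\mathfrak{E}$ of $\Lambda $ in the category of graded $\Lambda $-$\Lambda $-bimodules and observes that each term is injective as a left and as a right $\Lambda $-module, so this one complex computes both sides at once: $\Gamma _{m}^{k}(\Lambda )=H^{k}(\Gamma _{m}(\mathfrak{E}))$ and $\Gamma _{m^{op}}^{k}(\Lambda )=H^{k}(\Gamma _{m^{op}}(\mathfrak{E}))$. It then applies Lemma 4 to the pair $(\Lambda ,\Lambda ^{op})$, i.e. $\Gamma _{\mathfrak{m}}=\Gamma _{m^{op}}\circ \Gamma _{m}=\Gamma _{m}\circ \Gamma _{m^{op}}$ on bimodules, where $\mathfrak{m}$ is the graded radical of $\Lambda \otimes _{\Bbbk }\Lambda ^{op}$, together with the two-sidedness identity $\Lambda /\Lambda _{\geq \ell }\otimes _{\Lambda }\Lambda /\Lambda _{\geq k}\cong \Lambda /(\Lambda _{\geq \ell }+\Lambda _{\geq k})$, to collapse the double limits and conclude $\Gamma _{m}(\mathfrak{E})=\Gamma _{\mathfrak{m}}(\mathfrak{E})=\Gamma _{m^{op}}(\mathfrak{E})$; taking homology finishes the proof. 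The idea missing from your proposal is precisely this: a single resolution of $\Lambda $ as a bimodule on which both torsion functors act simultaneously, so that the two local cohomologies are read off the same complex rather than matched term by term in the limit.
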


\begin{proof}
Let $0\rightarrow \Lambda \rightarrow E_{0}\rightarrow E_{1}\rightarrow
...E_{t}\rightarrow E_{t+1}\rightarrow ...$ be an injective resolution of $%
\Lambda $ as bimodule, it is easy to prove that each $E_{t}$ is injective
both as left and as right module and let $\mathfrak{E}$ be the complex: $%
0\rightarrow E_{0}\rightarrow E_{1}\rightarrow ...E_{t}\rightarrow
E_{t+1}\rightarrow ...$

Then we have:

$\Gamma _{m_{\Lambda \otimes _{\Bbbk }\Lambda ^{op}}}(\mathfrak{E)}=\Gamma
_{m^{op}}\circ \Gamma _{m}(\mathfrak{E})=\underrightarrow{\lim }Hom_{\Lambda
^{op}}(\Lambda /\Lambda _{\geq k},$ $\underrightarrow{\lim }Hom_{\Lambda
}(\Lambda /\Lambda _{\geq \ell },\mathfrak{E}))=$

$\underrightarrow{\lim }\underrightarrow{\lim }Hom_{\Lambda ^{op}}(\Lambda
/\Lambda _{\geq k},$ $Hom_{\Lambda }(\Lambda /\Lambda _{\geq \ell },%
\mathfrak{E}))=\underrightarrow{\lim }\underrightarrow{\lim }Hom_{\Lambda
}(\Lambda /\Lambda _{\geq \ell }\otimes _{\Lambda }\Lambda /\Lambda _{\geq
k},\mathfrak{E})$

$=\underrightarrow{\lim }\underrightarrow{\lim }Hom_{\Lambda }(\Lambda
/\Lambda _{\geq \ell }+\Lambda _{\geq k},\mathfrak{E})=\underrightarrow{\lim 
}Hom_{\Lambda }(\Lambda /\Lambda _{\geq \ell },\mathfrak{E})=\Gamma _{m}(%
\mathfrak{E}).$

Similarly, $\Gamma _{m_{\Lambda \otimes _{\Bbbk }\Lambda ^{op}}}(\mathfrak{%
E)=}\Gamma _{m^{op}}(\mathfrak{E}).$

Taking the $k$-th homology of the complex we obtain $\Gamma _{m}^{k}(\Lambda
)=\Gamma _{m^{op}}^{k}(\Lambda ).$
\end{proof}

As an application of this equality we have the following:

\begin{corollary}
For a graded AS Gorenstein algebra $\Lambda $ of graded injective dimension $%
n$, such that all graded simple have projective resolutions consisting of
finitely generated projective, $I_{n}^{\prime }\cong J_{n}^{\prime }$ as a
bimodule.
\end{corollary}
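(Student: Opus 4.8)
The plan is to promote the complex-level identities established in the proof of the preceding Proposition (the one giving $\Gamma_m^k(\Lambda)=\Gamma_{m^{op}}^k(\Lambda)$) from statements about one-sided homology to statements about complexes of \emph{bimodules}, and then to read off the $n$-th homology. Let $\mathfrak{E}:0\rightarrow E_0\rightarrow E_1\rightarrow\cdots$ be the deleted injective resolution of $\Lambda$ viewed as a $\Lambda\otimes_\Bbbk\Lambda^{op}$-module. As recorded there, each $E_t$ is injective both as a left and as a right $\Lambda$-module, so $\mathfrak{E}$ is simultaneously a left-injective and a right-injective resolution of $\Lambda$. Hence $\Gamma_m^k(\Lambda)\cong H^k(\Gamma_m(\mathfrak{E}))$ computes the left local cohomology and $\Gamma_{m^{op}}^k(\Lambda)\cong H^k(\Gamma_{m^{op}}(\mathfrak{E}))$ the right one.

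First I would check that $\Gamma_m$ sends bimodules to bimodules: if $m^s x=0$ then $m^s(x\lambda)=(m^s x)\lambda=0$, so the left $m$-torsion of a bimodule is a sub-bimodule, and $\Gamma_m(E_t)=\varinjlim_k\mathrm{Hom}_\Lambda(\Lambda/\Lambda_{\geq k},E_t)$ inherits the right action carried by $E_t$; thus $\Gamma_m(\mathfrak{E})$ is a complex of bimodules, and likewise $\Gamma_{m^{op}}(\mathfrak{E})$. Now I invoke the chain of equalities from the preceding Proposition: applied to $\mathfrak{E}$ they give $\Gamma_m(\mathfrak{E})=\Gamma_{m_{\Lambda\otimes_\Bbbk\Lambda^{op}}}(\mathfrak{E})=\Gamma_{m^{op}}(\mathfrak{E})$, and since every functor appearing preserves the bimodule structure these are equalities of complexes of bimodules. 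Taking $n$-th homology yields an isomorphism of bimodules $\Gamma_m^n(\Lambda)\cong\Gamma_{m^{op}}^n(\Lambda)$.

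It remains to identify these with $I_n'$ and $J_n'$ \emph{as bimodules}. The left-module identification $\Gamma_m^n(\Lambda)\cong I_n'$, and dually $\Gamma_{m^{op}}^n(\Lambda)\cong J_n'$, was already established before the definition of local cohomology. What must be verified is that the right action on $H^n(\Gamma_m(\mathfrak{E}))$ coming from right multiplication on $\mathfrak{E}$ matches, under that identification, the right action $\widehat{\phi}_\lambda$ built earlier by lifting $\phi_\lambda$ along the one-sided resolution $I_\bullet$. This is the main obstacle, but it is forced: both actions are the derived-functor action of the left-module endomorphism $\phi_\lambda$ (right multiplication by $\lambda$) on $\Gamma_m^n(\Lambda)$, and this action is independent of the chosen left-injective resolution up to the canonical comparison isomorphism. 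Since $\widehat{\phi}_\lambda$ is by construction one such lift of $\phi_\lambda$, the two bimodule structures on $I_n'\cong\Gamma_m^n(\Lambda)$ coincide; the same argument identifies $\Gamma_{m^{op}}^n(\Lambda)$ with $J_n'$ as bimodules. Composing with the bimodule isomorphism of the previous paragraph gives $I_n'\cong J_n'$ as bimodules, upgrading the one-sided isomorphism of the earlier Proposition, with the expected difficulty — reconciling the hand-built $\widehat{\phi}_\lambda$ with the functorial action — dissolved by the resolution-independence of derived functors.
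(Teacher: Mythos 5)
Your proposal is correct and takes essentially the same route as the paper: the Corollary there is exactly an application of the preceding Proposition, whose proof already establishes $\Gamma_{m}(\mathfrak{E})=\Gamma_{m_{\Lambda\otimes_{\Bbbk}\Lambda^{op}}}(\mathfrak{E})=\Gamma_{m^{op}}(\mathfrak{E})$ for the bimodule injective resolution $\mathfrak{E}$, so that taking $n$-th homology and using $\Gamma_{m}^{n}(\Lambda)\cong I_{n}^{\prime}$, $\Gamma_{m^{op}}^{n}(\Lambda)\cong J_{n}^{\prime}$ gives the bimodule isomorphism. Your extra verification that the hand-built action $\widehat{\phi}_{\lambda}$ agrees with the functorial right action (via resolution-independence of derived functors) is a detail the paper leaves implicit, and you supply it correctly.
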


\section{Local Cohomology}

The aim of this section is to prove the Local Cohomology formula for a class
of graded Gorenstein algebras. We already have all the ingredients to prove
it for graded Gorenstein algebras of finite local cohomology dimension and
such that all graded simple have graded projective resolutions consisting of
finitely generated projective modules.

\begin{definition}
We say that a graded AS Gorenstein algebra $\Lambda $ has finite left local
cohomology dimension, if there is a non negative integer $\ell _{0}$ such
that for any left $\Lambda $-module $M$ and any integer $\ell >\ell _{0}$, $%
\Gamma _{m}^{\ell }(M)=0$.
\end{definition}

Let $\Lambda $ be an algebra satisfying these conditions, consider the
projection maps $\overline{\pi }_{k}:\Lambda /\Lambda _{\geq k}\rightarrow
\Lambda /$ $\Lambda _{\geq k-1}$, the collection \{$\overline{\pi }_{k}$,$%
\Lambda /\Lambda _{\geq k}$\}$_{k\geq 0}$ forms a graded inverse system.

For each $k$, there is a minimal graded projective resolution: $\mathfrak{P}%
^{(k)}\rightarrow \Lambda /\Lambda _{\geq k}$.

$\mathfrak{P}^{(k)}$:... $P_{n+1}^{(k)}\rightarrow P_{n}^{(k)}\rightarrow
P_{n-1}^{(k)}\rightarrow ...P_{1}^{(k)}\rightarrow P_{0}^{(k)}\rightarrow
\Lambda /\Lambda _{\geq k}\rightarrow 0$.

The projections $\pi _{n+1}^{(k)}:\Lambda /\Lambda _{\geq k}\rightarrow
\Lambda /\Lambda _{\geq k-1}$ induce maps of projective resolutions:

\begin{center}
$%
\begin{array}{ccccccc}
\text{...P}_{n+1}^{(k)} & \rightarrow \text{P}_{n}^{(k)} & \rightarrow \text{%
P}_{n-1}^{(k)} & \rightarrow \text{...P}_{1}^{(k)} & \rightarrow \text{P}%
_{0}^{(k)} & \rightarrow \Lambda \text{/}\Lambda _{\geq k} & \rightarrow 
\text{0} \\ 
\downarrow \pi _{n+1}^{(k)} & \downarrow \pi _{n}^{(k)} & \downarrow \pi
_{n-1}^{(k)} & \downarrow \pi _{1}^{(k)} & \downarrow \pi _{0}^{(k)} & 
\downarrow \overline{\pi }_{k} &  \\ 
\text{...P}_{n+1}^{(k-1} & \rightarrow \text{P}_{n}^{(k-1)} & \rightarrow 
\text{P}_{n-1}^{(k-1)} & \rightarrow \text{...P}_{1}^{(k-1)} & \rightarrow 
\text{P}_{0}^{(k-1)} & \rightarrow \Lambda \text{/}\Lambda _{\geq k-1} & 
\rightarrow \text{0}%
\end{array}%
$
\end{center}

Dualizing with respect to the ring we get a map of complexes:

$%
\begin{array}{ccccccc}
\mathfrak{P}^{\ast (k-1)}\text{:} & \text{0}\rightarrow \text{P}_{0}^{\ast
(k-1)} & \rightarrow \text{P}_{1}^{\ast (k-1)} & \rightarrow \text{P}%
_{2}^{\ast (k-1)} & \text{...P}_{n}^{\ast (k-1)} & \rightarrow \text{P}%
_{n+1}^{\ast (k-1)} & \text{...} \\ 
\downarrow \text{(}\pi _{k}\text{)}^{\ast } & \downarrow \text{(}\pi
_{0}^{(k)}\text{)}^{\ast } & \downarrow \text{(}\pi _{1}^{(k)}\text{)}^{\ast
} & \downarrow \text{(}\pi _{2}^{(k)}\text{)}^{\ast } & \downarrow \text{(}%
\pi _{n}^{(k)}\text{)}^{\ast } & \downarrow \text{(}\pi _{n+1}^{(k)}\text{)}%
^{\ast } &  \\ 
\mathfrak{P}^{\ast (k)}\text{:} & \text{0}\rightarrow \text{P*}_{0}^{(k)} & 
\rightarrow \text{P}_{1}^{\ast (k)} & \rightarrow \text{P}_{2}^{\ast (k)} & 
\text{...P}_{n}^{\ast (k)} & \rightarrow \text{P}_{n+1}^{\ast (k)} & \text{%
...}%
\end{array}%
$

where $H^{i}(\mathfrak{P}^{\ast (k)})=Ext_{\Lambda }^{i}(\Lambda /\Lambda
_{\geq k},\Lambda )=0$ for $i\neq n$ and $H^{n}(\mathfrak{P}^{\ast (k)})=$%
\linebreak $Ext_{\Lambda }^{n}(\Lambda /\Lambda _{\geq k},\Lambda )$ is
different from zero and of finite length.

In addition, for any graded left $\Lambda $-module $M$, $\mathfrak{P}^{\ast
(k)}\otimes M\cong Hom_{\Lambda }(\mathfrak{P}^{(k)},M).$

Hence; $H^{i}(\mathfrak{P}^{\ast (k)}\otimes M)\cong Ext_{\Lambda
}^{i}(\Lambda /\Lambda _{\geq k},M)$.

We have a direct system of complexes \{$\mathfrak{P}^{\ast (k)}$, ($\pi _{k}$%
)$^{\ast }\}$ and $\mathfrak{F}=\underrightarrow{\lim }\mathfrak{P}^{\ast
(k)}$ is a complex of flat modules. Since $\underrightarrow{\lim }$ is an
exact functor, $H^{i}(\mathfrak{F)}=H^{i}(\underrightarrow{\lim }\mathfrak{P}%
^{\ast (k)})=\underrightarrow{\lim }H^{i}(\mathfrak{P}^{\ast (k)})=%
\underrightarrow{\lim }Ext_{\Lambda }^{i}(\Lambda /\Lambda _{\geq k},\Lambda
)$, and $\underrightarrow{\lim }(\mathfrak{P}^{\ast (k)}\otimes M)\cong 
\underrightarrow{\lim }Hom_{\Lambda }(\mathfrak{P}^{(k)},M).$

Therefore: $H^{i}($ $\underrightarrow{\lim }(\mathfrak{P}^{\ast (k)}\otimes
M))\cong H^{i}(\underrightarrow{\lim }Hom_{\Lambda }(\mathfrak{P}%
^{(k)},M))\cong $

$\underrightarrow{\lim }H^{i}(Hom_{\Lambda }(\mathfrak{P}^{(k)},M))\cong 
\underrightarrow{\lim }Ext_{\Lambda }^{i}(\Lambda /\Lambda _{\geq k},M)$=$%
\Gamma _{m}^{i}(M)$.

We see next that the assumption of finite cohomological dimension $\ell _{0}$
imposes strong restrictions on the complex $\mathfrak{F}.$

$\mathfrak{F}:$ $0\rightarrow F_{0}\overset{d_{0}}{\rightarrow }F_{1}\overset%
{d_{1}}{\rightarrow }F_{2}\overset{d_{2}}{\rightarrow }...F_{j}...%
\rightarrow F_{\ell _{0}}\overset{d_{\ell _{0}}}{\rightarrow }F_{\ell _{0}+1}%
\overset{d_{\ell _{0}+1}}{\rightarrow }F_{\ell _{0}+2}\rightarrow ...$

The complex $\mathfrak{F}$ has homology $H^{i}(\mathfrak{F)}=%
\underrightarrow{\lim }Ext_{\Lambda }^{i}(\Lambda /\Lambda _{\geq k},\Lambda
)=0$ for $i\neq n$. Since $\Gamma _{m}^{n}(\Lambda )=I_{n}^{\prime }\neq 0$, 
$\ell _{0}\geq n$ and $H^{i}(\mathfrak{F}\otimes M)=0$ for $i>\ell _{0}$.

Consider the exact sequence: *) $F_{\ell _{0}}\overset{d_{\ell _{0}}}{%
\rightarrow }F_{\ell _{0}+1}\overset{d_{\ell _{0}+1}}{\rightarrow }F_{\ell
_{0}+2}\overset{d_{\ell _{0}+2}}{\rightarrow }C\rightarrow 0.$

*) is part of a flat resolution of $C$, tensoring with a graded left $%
\Lambda $-module $M$ we obtain a complex: $F_{\ell _{0}}\otimes M\overset{%
d_{\ell _{0}}\otimes 1}{\rightarrow }F_{\ell _{0}+1}\otimes M\overset{%
d_{\ell _{0}+1}\otimes 1}{\rightarrow }F_{\ell _{0}+2}\otimes M\rightarrow 0$%
, where $Kerd_{\ell _{0}+1}\otimes 1/\func{Im}d_{\ell _{0}}\otimes 1=\Gamma
_{m}^{\ell _{0}+1}(\mathfrak{F}\otimes M)=Tor_{1}^{\Lambda }(C,M)=0$ for all
graded $\Lambda $-modules $M$.

This implies $C$ is flat.

Consider the exact sequence: $0\rightarrow C_{1}\rightarrow F_{\ell _{0}+2}%
\overset{d_{\ell _{0}+2}}{\rightarrow }C\rightarrow 0.$ By the long homology
sequence there is an exact sequence:

\begin{center}
$Tor_{2}^{\Lambda }(C,M)\rightarrow Tor_{1}^{\Lambda }(C_{1},M)\rightarrow
Tor_{1}^{\Lambda }(F_{\ell _{0}+2},M)$
\end{center}

Since $C$ and $F_{\ell _{0}+2}$ are flat, it follows $Tor_{1}^{\Lambda
}(C_{1},M)=0$ for all $M$, hence $C_{1}$ is also flat.

We use induction to get an exact sequence: $0\rightarrow Kerd_{n}\rightarrow
F_{n}\rightarrow \func{Im}d_{n}\rightarrow 0$ with $F_{n}$ and $\func{Im}%
d_{n}$ flat. It follows $F_{n}^{\prime }=Kerd_{n}$ is flat and we have a
complex $\mathfrak{F}^{\prime }$ of flat modules:

$\mathfrak{F}^{\prime }:$ $0\rightarrow F_{0}\overset{d_{0}}{\rightarrow }%
F_{1}\overset{d_{1}}{\rightarrow }F_{2}\overset{d_{2}}{\rightarrow }%
...F_{j}...\rightarrow F_{n-1}\overset{d_{n-1}}{\rightarrow }F_{n}^{\prime
}\rightarrow 0$ such that $\Gamma ^{i}(\mathfrak{F}^{\prime })=\Gamma ^{i}(%
\mathfrak{F)=}0$ for $i\neq n$ and $\Gamma ^{n}(\mathfrak{F}^{\prime }%
\mathfrak{)=}\Gamma ^{n}(\mathfrak{F)=}\underrightarrow{\lim }Ext_{\Lambda
}^{n}(\Lambda /\Lambda _{\geq k},\Lambda )=I_{n}^{\prime }.$

Also for any graded left $\Lambda $-module $M$, the sequence: $0\rightarrow $
$Tor_{1}^{\Lambda }(\func{Im}d_{n},M)\rightarrow F_{n}^{\prime }\otimes
M\rightarrow F_{n}\otimes M\rightarrow \func{Im}d_{n}\otimes M\rightarrow 0$
is exact, and $\func{Im}d_{n}$ flat implies $0\rightarrow F_{n}^{\prime
}\otimes M\rightarrow F_{n}\otimes M\rightarrow \func{Im}d_{n}\otimes
M\rightarrow 0$ is exact. Similarly, the sequence: $0\rightarrow \func{Im}%
d_{n}\otimes M\rightarrow F_{n+1}\otimes M\rightarrow \func{Im}%
d_{n+1}\otimes M\rightarrow 0$ is exact and the sequence: $0\rightarrow
F_{n}^{\prime }\otimes M\rightarrow F_{n}\otimes M\rightarrow F_{n-1}\otimes
M$ is exact.

It follows $\Gamma ^{n}(\mathfrak{F}^{\prime }\otimes M)=\Gamma ^{n}(%
\mathfrak{F}\otimes M)=\Gamma ^{n}(M)$.

Since $\mathfrak{F}^{\prime }\rightarrow I_{n}^{\prime }$ is a flat
resolution of $I_{n}^{\prime }$. The complex $\mathfrak{F}^{\prime }\otimes
M $ has $i$-th homology $Tor_{n-i}^{\Lambda }(I_{n}^{\prime },M).$ It
follows $Tor_{n-i}^{\Lambda }(I_{n}^{\prime },M)=\underrightarrow{\lim }%
Ext_{\Lambda }^{i}(\Lambda /\Lambda _{\geq k},M)$=$\Gamma _{m}^{i}(M)$.
Applying the duality we obtain the local cohomology formula:

$D(\underrightarrow{\lim }Ext_{\Lambda }^{i}(\Lambda /\Lambda _{\geq
k},M))=Ext_{\Lambda }^{n-i}(M,$ $D(\Gamma _{m}^{n}(\Lambda ))$ , for all $%
0\leq i\leq n$.

We have proved the following:

\begin{theorem}
Let $\Lambda $ be a graded AS Gorenstein algebra of graded injective
dimension $n$ and such that all graded simple modules have projective
resolutions consisting of finitely generated projective modules and assume $%
\Lambda $ has finite local cohomology dimension. Then for any graded left
module $M$ there is a natural isomorphism: $D(\underrightarrow{\lim }%
Ext_{\Lambda }^{i}(\Lambda /\Lambda _{\geq k},M))=Ext_{\Lambda }^{n-i}(M,$ $%
D(\Gamma _{m}^{n}(\Lambda ))$ , for $0\leq i\leq n$.
\end{theorem}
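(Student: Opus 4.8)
The plan is to reduce everything to the single natural isomorphism $\Gamma_m^i(M)\cong Tor_{n-i}^{\Lambda}(I_n',M)$ for $0\le i\le n$, and then to dualize. Once that identity is in hand, I apply the earlier $Ext$--$Tor$ lemma $Ext_{\Lambda}^{n-i}(M,Y)\cong D(Tor_{n-i}^{\Lambda}(D(Y),M))$ with $Y=D(I_n')=D(\Gamma_m^n(\Lambda))$. Since $I_n'=\Gamma_m^n(\Lambda)$ is locally finite and $D(I_n')$ is finitely generated projective (both facts established above), $D$ is a duality on these modules and $D^2(I_n')\cong I_n'$, so this step reads $D(\Gamma_m^i(M))\cong D(Tor_{n-i}^{\Lambda}(I_n',M))\cong Ext_{\Lambda}^{n-i}(M,D(\Gamma_m^n(\Lambda)))$, which is precisely the asserted formula. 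So the whole theorem comes down to the $Tor$ identity, natural in $M$.

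To obtain that identity I would work with the flat complex $\mathfrak{F}=\underrightarrow{\lim}\,\mathfrak{P}^{\ast(k)}$ built above from the dualized minimal finitely generated projective resolutions of the $\Lambda/\Lambda_{\ge k}$. For each $k$ one has $\mathfrak{P}^{\ast(k)}\otimes M\cong Hom_{\Lambda}(\mathfrak{P}^{(k)},M)$, so by exactness of the direct limit $H^i(\mathfrak{F}\otimes M)\cong\Gamma_m^i(M)$ for every graded $M$ and every $i$; in particular $H^i(\mathfrak{F})\cong\Gamma_m^i(\Lambda)$, which is $0$ for $i\ne n$ and $I_n'$ for $i=n$. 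Each term of $\mathfrak{F}$ is a direct limit of finitely generated projectives, hence flat, so $\mathfrak{F}$ is a flat complex whose cohomology is concentrated in degree $n$.

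The crux, and the step I expect to be the main obstacle, is to replace $\mathfrak{F}$ by a \emph{bounded} flat complex $\mathfrak{F}':0\to F_0\to\cdots\to F_{n-1}\to F_n'\to 0$ that is a genuine flat resolution of $I_n'$ and still computes $\Gamma_m^{\bullet}(M)$ after tensoring. Here the hypothesis of finite local cohomology dimension $\ell_0$ is essential: it forces $H^i(\mathfrak{F}\otimes M)=\Gamma_m^i(M)=0$ for all $i>\ell_0$ and \emph{all} $M$. Applying this to the tail of $\mathfrak{F}$, the cokernel $C=\operatorname{coker}d_{\ell_0+1}$ satisfies $Tor_1^{\Lambda}(C,M)=0$ for every $M$, hence is flat; a descending syzygy (dimension-shift) argument, using that the intervening images and the $F_j$ are flat so that the relevant $Tor_1$ groups vanish, then propagates flatness downward until one reaches $F_n'=\ker d_n$, which is therefore flat. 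Because every discarded module ($\operatorname{Im}d_j$ for $j\ge n$ and $F_j$ for $j>n$) is flat, the short exact sequences $0\to F_n'\to F_n\to\operatorname{Im}d_n\to 0$ and its successors stay exact after $\otimes\,M$, and from this one checks $H^i(\mathfrak{F}'\otimes M)=H^i(\mathfrak{F}\otimes M)$ for every $i\le n$. Since the cohomology of $\mathfrak{F}'$ is still $I_n'$ in degree $n$ and zero below, $\mathfrak{F}'$ is a flat resolution of $I_n'$.

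With $\mathfrak{F}'$ available the conclusion is immediate. Reindexing the bounded cochain complex $\mathfrak{F}'$ of length $n$ as a flat resolution of $I_n'$ gives $H^i(\mathfrak{F}'\otimes M)\cong Tor_{n-i}^{\Lambda}(I_n',M)$. Combining this with $H^i(\mathfrak{F}'\otimes M)=H^i(\mathfrak{F}\otimes M)\cong\Gamma_m^i(M)$ yields $\Gamma_m^i(M)\cong Tor_{n-i}^{\Lambda}(I_n',M)$ naturally in $M$ for $0\le i\le n$, and applying $D$ together with the $Ext$--$Tor$ lemma as in the first paragraph produces the Local Cohomology formula $D(\underrightarrow{\lim}\,Ext_{\Lambda}^i(\Lambda/\Lambda_{\ge k},M))\cong Ext_{\Lambda}^{n-i}(M,D(\Gamma_m^n(\Lambda)))$ in the stated range.
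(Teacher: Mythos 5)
Your proposal is correct and takes essentially the same route as the paper's own proof: the same flat complex $\mathfrak{F}=\underrightarrow{\lim }\mathfrak{P}^{\ast (k)}$ with $H^{i}(\mathfrak{F}\otimes M)\cong \Gamma _{m}^{i}(M)$, the same use of finite local cohomology dimension to prove flatness of the cokernel and then, by descending dimension shifting, of $F_{n}^{\prime }=Ker\,d_{n}$, yielding the bounded flat resolution $\mathfrak{F}^{\prime }$ of $I_{n}^{\prime }$ and the identification $\Gamma _{m}^{i}(M)\cong Tor_{n-i}^{\Lambda }(I_{n}^{\prime },M)$, followed by the same dualization through the $Ext$--$Tor$ lemma with $Y=D(I_{n}^{\prime })$. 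The only difference is expository (you state the dualization step first as a reduction), not mathematical.
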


\bigskip The theorem generalizes results for Artin-Schelter regular algebras
proved in [11].

We will finish the paper giving a family of examples of graded AS Gorenstein
algebras of finite local cohomology dimension.

Let $\Lambda =\Bbbk Q_{1}/I_{1}$ be a graded selfinjective non semisimple
algebra and $\Gamma =\Bbbk Q_{2}/I_{2}$ an Artin-Schelter regular algebra of
global dimension $n$, in the sense of $[12],[15]$, denote by $r$, $m$, the
graded Jacobson radicals of $\Lambda $ and $\Gamma $, respectively and $%
\Lambda _{0}=\Lambda /r$, $\Gamma _{0}=\Gamma /m.$

The following formula was proved in $[13]\ $:

$Ext_{\Lambda \otimes \Gamma }^{t}(\Lambda _{0}\otimes _{\Bbbk }\Gamma
_{0},\Lambda \otimes _{K}\Gamma )\cong \underset{i+j=m}{\oplus }Ext_{\Lambda
}^{i}(\Lambda _{0},\Lambda )\otimes _{\Bbbk }Ext_{\Gamma }^{j}(\Gamma
_{0},\Gamma ).$

Since $\Lambda $ is selfinjective $Ext_{\Lambda \otimes \Gamma }^{t}(\Lambda
_{0}\otimes _{\Bbbk }\Gamma _{0},\Lambda \otimes _{\Bbbk }\Gamma )\cong
Hom_{\Lambda }(\Lambda _{0},\Lambda )\otimes _{\Bbbk }Ext_{\Gamma
}^{t}(\Gamma _{0},\Gamma )$ and $\Gamma $ Artin-Schelter regular implies $%
Ext_{\Gamma }^{t}(\Gamma _{0},\Gamma )=0$ for $t\neq 0$ and $Ext_{\Gamma
}^{n}(\Gamma _{0},\Gamma )=\Gamma _{0}^{op}[-n]$.

It follows $Ext_{\Lambda \otimes \Gamma }^{t}(\Lambda _{0}\otimes _{\Bbbk
}\Gamma _{0},\Lambda \otimes _{\Bbbk }\Gamma )=0$ for $t\neq n$ and $%
Ext_{\Lambda \otimes \Gamma }^{n}(\Lambda _{0}\otimes _{\Bbbk }\Gamma
_{0},\Lambda \otimes _{\Bbbk }\Gamma )=\Lambda _{0}^{op}\otimes _{\Bbbk
}\Gamma _{0}^{op}[-n]$.

We have proved $\Lambda \otimes _{\Bbbk }\Gamma $ is graded Gorenstein of
injective dimension $n$.

The graded radical of $\Lambda \otimes _{\Bbbk }\Gamma $ is $\mathfrak{m}$=$%
\Lambda \otimes m+r\otimes \Gamma $. Then $\mathfrak{m}^{k}=\underset{i+j=k}{%
\sum }r^{i}\otimes m^{j}.$ Assume $r^{t-1}\neq 0$ and $r^{t}=0$. For $k\geq
t $, $\mathfrak{m}^{k}=($ $\underset{i+j=t-1}{\sum }r^{i}\otimes
m^{j})\Lambda \otimes m^{k-t+1}\subseteq (\Lambda \otimes _{\Bbbk }\Gamma
)\Lambda \otimes m^{k-t+1}=\Lambda \otimes m^{k-t+1}$.

We have inequalities: $\Lambda \otimes m^{k}\subseteq \mathfrak{m}%
^{k}\subseteq \Lambda \otimes m^{k-t+1}\subseteq m^{k-t+1}.$ Hence, we have
surjective maps: $\Lambda \otimes _{\Bbbk }\Gamma /$ $\Lambda \otimes
m^{k}\rightarrow \Lambda \otimes _{\Bbbk }\Gamma /\mathfrak{m}%
^{k}\rightarrow \Lambda \otimes _{\Bbbk }\Gamma /\Lambda \otimes
m^{k-t+1}\rightarrow \Lambda \otimes _{\Bbbk }\Gamma /m^{k-t+1}$, which
induce maps:

$\rightarrow Ext_{\Lambda \otimes \Gamma }^{j}(\Lambda \otimes _{\Bbbk
}\Gamma /\mathfrak{m}^{k-t+1},M)\rightarrow Ext_{\Lambda \otimes \Gamma
}^{j}(\Lambda \otimes _{\Bbbk }(\Gamma /m^{k-t+1}),M)\rightarrow
Ext_{\Lambda \otimes \Gamma }^{j}($ $\Lambda \otimes _{\Bbbk }\Gamma /$ $%
\mathfrak{m}^{k},M)\rightarrow Ext_{\Lambda \otimes \Gamma }^{j}(\Lambda
\otimes _{\Bbbk }(\Gamma /m^{k}),M).$

Taking direct limits the sequence:

$\rightarrow \underrightarrow{\lim }Ext_{\Lambda \otimes \Gamma
}^{j}(\Lambda \otimes _{\Bbbk }\Gamma /\mathfrak{m}^{k-t+1},M)\rightarrow 
\underrightarrow{\lim }Ext_{\Lambda \otimes \Gamma }^{j}(\Lambda \otimes
_{\Bbbk }(\Gamma /m^{k-t+1}),M)\rightarrow $\linebreak $\underrightarrow{%
\lim }Ext_{\Lambda \otimes \Gamma }^{j}($ $\Lambda \otimes _{\Bbbk }\Gamma /$
$\mathfrak{m}^{k},M)\rightarrow \underrightarrow{\lim }Ext_{\Lambda \otimes
\Gamma }^{j}(\Lambda \otimes _{\Bbbk }(\Gamma /m^{k}),M)$

is exact.

Using the fact: $\underrightarrow{\lim }Ext_{\Lambda \otimes \Gamma
}^{j}(\Lambda \otimes _{\Bbbk }\Gamma /\mathfrak{m}^{k-t+1},M)=%
\underrightarrow{\lim }Ext_{\Lambda \otimes \Gamma }^{j}($ $\Lambda \otimes
_{\Bbbk }\Gamma /$ $\mathfrak{m}^{k},M)$ and $\underrightarrow{\lim }%
Ext_{\Lambda \otimes \Gamma }^{j}(\Lambda \otimes _{\Bbbk }(\Gamma
/m^{k-t+1}),M)=\underrightarrow{\lim }Ext_{\Lambda \otimes \Gamma
}^{j}(\Lambda \otimes _{\Bbbk }(\Gamma /m^{k}),M)$, it follows:

$\underrightarrow{\lim }Ext_{\Lambda \otimes \Gamma }^{j}($ $\Lambda \otimes
_{\Bbbk }\Gamma /$ $\mathfrak{m}^{k},M)\cong \underrightarrow{\lim }%
Ext_{\Lambda \otimes \Gamma }^{j}(\Lambda \otimes _{\Bbbk }(\Gamma
/m^{k}),M).$

Let $0\rightarrow Q_{n}^{(k)}\rightarrow Q_{n-1}^{(k)}...\rightarrow
Q_{1}^{(k)}\rightarrow \Gamma \longrightarrow \Gamma /m^{k}\rightarrow 0$ be
a graded projective resolution of $\Gamma /m^{k}$.

Then $0\rightarrow \Lambda \otimes Q_{n}^{(k)}\rightarrow \Lambda \otimes
Q_{n-1}^{(k)}...\rightarrow \Lambda \otimes Q_{1}^{(k)}\rightarrow \Lambda
\otimes \Gamma \longrightarrow \Lambda \otimes \Gamma /m^{k}\rightarrow 0$
is a graded projective resolution of $\Lambda \otimes _{\Bbbk }(\Gamma
/m^{k})$ and $Ext_{\Lambda \otimes \Gamma }^{j}(\Lambda \otimes _{\Bbbk
}(\Gamma /m^{k}),M)=0$ for $j>n$. It follows $H_{\mathfrak{m}}^{j}(M)=%
\underrightarrow{\lim }Ext_{\Lambda \otimes \Gamma }^{j}($ $\Lambda \otimes
_{\Bbbk }\Gamma /$ $\mathfrak{m}^{k},M)=0$ for $j>n$.

We have proved $\Lambda \otimes _{\Bbbk }\Gamma $ has local cohomology
dimension $n$.

We give two concrete examples in which this situation natural arises:

1) Let $\Lambda =\Bbbk <X_{1},X_{2},...X_{n}>/\{X_{i}^{2},$ $%
X_{i}X_{j}+X_{j}X_{i}$ \}$_{i\neq j}$ be the exterior algebra in $n$%
-variables and $\Gamma =\Bbbk \lbrack X_{1},X_{2},...X_{n}]$ the polynomial
algebra. Then the ring of polynomial forms $\Gamma \otimes \Lambda $ is
Gorenstein of finite local cohomology dimension $n$.

This example appears as the cohomology ring of the group algebra of an
elementary abelian $p$ group, over a field of characteristic $p>2$. [5]

2) Let $Q$ be a non Dynkin quiver with only sinks and sources $\Bbbk $ a
field, $\Lambda =\Bbbk Q\trianglerighteq D(\Bbbk Q)$ the trivial extension
and $\Gamma $ the preprojective algebra [10] corresponding to the quiver $Q.$
Then $\Gamma \otimes \Lambda $ is Gorenstein of finite local cohomology
dimension $2$.

\begin{center}
{\LARGE References}
\end{center}

\bigskip

[1] Artin M., Schelter W. Graded algebras of global dimension 3, Adv. Math.
66 (1987), 171-216.

[2]Auslander M. Reiten I. k-Gorenstein algebras and syzygy nodules. Journal
of Pure and Applied Algebra 92 (1994) 1-27.

[3] Auslander M. Reiten I. Cohen-Maculay and Gorenstein artin algebras,
Progress in Math. 95, 1991 Birkh\"{a}user, 221-245.

[4] Bruns W. Herzog J. Cohen-Macaulay rings, Cambridge studies in advanced
mathematics 39, 1993.

[5] Carlson J.F. The Varieties and the Cohomology Ring of a Module, J. of
Algebra, Vol. 85, No. 1, (1983) 104-143.

[6] Fossum R.M. Griffith P.A., Reiten I. Trivial Extensions of Abelian
Categories, Lecture Notres in Math. \$56 Springer, 1975.

[7] Iwanaga Y. Sato H. On Auslander n-Gorenstein rings. J. Pure Applied
Algebra. (1996) 61-76.

[8] J$\varnothing $rgensen P. Properties of As Cohen-Macaulay algebras,. J.
Pure Apl. Algebra !38 (1999), 239-249.

[9] J$\varnothing $rgensen P. Local Cohomology for Non Commutative Graded
Algebras. Comm. in Algebra, 25(2), 575-591 (1997)

[10] Mart\'{\i}nez-Villa, R. Applications of Koszul algebras: the
preprojective algebra. Representation theory of algebras (Cocoyoc, 1994),
487--504, CMS Conf. Proc., 18, Amer. Math. Soc., Providence, RI, 1996.

[11] Mart\'{\i}nez-Villa, R. Serre duality for generalized Auslander regular
algebras. Trends in the representation theory of finite-dimensional algebras
(Seattle, WA, 1997), 237--263, Contemp. Math., 229, Amer. Math. Soc.,
Providence, RI, 1998.

[12] Martinez-Villa, R. Graded, Selfinjective, and Koszul Algebras, J.
Algebra 215, 34-72 1999

[13] Martinez-Villa, R. Koszul algebras and the Gorenstein condition.
Representations of algebras (S\~{a}o Paulo, 1999), 135--156, Lecture Notes
in Pure and Appl. Math., 224, Dekker, New York, 2002.

[14]Mart\'{\i}nez-Villa R. Solberg O. Graded and Koszul Categories, Appl.
Categ. Structures 18 (2010), no. 6, 615--652.

[15] Mart\'{\i}nez-Villa R. Solberg O. Artin Schelter regular algebras and
categories, J. Pure Appl. Algebra 215 (2011), no. 4, 546--565

[16] Miyashita Y. Tilting modules of Finite Projective Dimension. Math. Z.
193, 113-146 (1986)

[17] Rotman J.J. An Introduction to Homological Algebra, Second Edition
Universitext, Springer, 2009.

[18] Van Den Bergh M. Existence theorems for dualizing complexes over
non-commutative graded and filtered rings. J. Algebra 195 (1997), no.2,
662-679.

[19] Zaks, A. Injective dimension of semiprimary rings, J. of Algebra 13
(1969), 390-398.

[20] Zhang J.J. Connected Graded Gorenstein Algebras with Enough Normal
Elements, J. Algebra 189 (1997), 390-405.

\end{document}